\numberwithin{equation}{section}
\newtheorem{theo}{Theorem}[section]
\newtheorem{lem}[theo]{Lemma}
\newtheorem{prop}[theo]{Proposition}
\newcommand{\re}{\Re\mathrm{e}} % partie r\UTF{00E9}elle
\newcommand{\im}{\Im\mathrm{m}} % partie imaginaire
\newcommand{\eps}{\varepsilon}
\newcommand{\R}{\mathbb{R}}
\DeclareMathOperator{\cotan}{cotan}
\begin{document}
\title{A KPP road-field system with spatially periodic exchange terms}
\author{Thomas Giletti\footnote{IECL, Universit\'{e} de Lorraine, B.P. 70239, 54506
Vandoeuvre-l\`{e}s-Nancy Cedex, France. E-mail:
thomas.giletti@univ-lorraine.fr.} , L\'{e}onard Monsaingeon\footnote{CAMGSD-IST, University of Lisbon, Av. Rovisco Pais
1049-001 Lisboa, Portugal. E-mail: leonard.monsaingeon@ist.utl.pt.} \ and Maolin Zhou\footnote{School of Science and Technology, University of New England, Armidale, NSW 2351, Australia. E-mail: zhouutokyo@gmail.com.}}
\maketitle

\begin{abstract}
We take interest in a reaction-diffusion system which has been recently proposed~\cite{BRR_influence_of_a_line} as a model for the effect of a road on propagation phenomena arising in epidemiology and ecology. This system consists in coupling a classical Fisher-KPP equation in a half-plane with a line with fast diffusion accounting for a straight road. The effect of the line on spreading properties of solutions (with compactly supported initial data) was investigated in a series of works starting from~\cite{BRR_influence_of_a_line}. We recover these earlier results in a more general spatially periodic framework by exhibiting a threshold for road diffusion above which the propagation is driven by the road and the global speed is accelerated. We also discuss further applications of our approach, which will rely on the construction of a suitable generalized principal eigenvalue, and investigate in particular the spreading of solutions with exponentially decaying initial data.
\end{abstract}
\smallskip 
\hspace{8mm} \small \textbf{Keywords.} Reaction-diffusion systems, asymptotic spreading speed, generalized principal eigenvalues.

\normalsize

\section{Introduction}

The study of the large time behavior of solutions of reaction-diffusion equations has been motivated by a wide range of applications, in particular in ecology and epidemiology. Indeed, it is well understood that propagation phenomena arise from the combining effect of diffusion (which accounts in an ecological context for the motion of individuals) and reaction (reproduction of the species), thus providing a mathematical model for ecological invasions or epidemics~\cite{ShiKa}.

The most classical example is the so-called Fisher-KPP equation
\begin{equation}\label{eq:KPP}
\partial_t v = d \Delta v + f(v), \quad t>0 , \ x \in \R^N , \end{equation}
where $d>0$ and the nonlinearity $f \in C^{1,r}_{loc} ([0,\infty))$, $0<r<1$, satisfies
\begin{equation}\label{hyp:KPP}
f(0)=f(1)=0, \; v \mapsto \frac{f(v)}{v} \mbox{ is decreasing,} \; \mbox{ and } \frac{f(v)}{v} \rightarrow -\infty \mbox{ as } v \rightarrow +\infty .
\end{equation}
The typical example is $f(v) = v (1-v)$ and was investigated in the seminal papers~\cite{Fisher,KPP}. In particular, this equation is well-known to admit positive traveling waves, namely planar solutions moving with a constant shape and speed through the domain and connecting the two constant steady states 0 and 1~\cite{AW75,Fisher,KPP}. The minimal wave speed, which can be explicitly computed as 
$$c^*_{KPP} = 2 \sqrt{d f'(0)},$$
is also the asymptotic spreading speed of solutions of the Cauchy problem~\eqref{eq:KPP} with compactly supported initial data~\cite{AW78}. The striking feature of these results is that, under the KPP assumption~\eqref{hyp:KPP}, the speed $c^*_{KPP}$ is fully determined by the linearization of~\eqref{eq:KPP} around $v=0$.

Various extensions of the KPP equation have been investigated in the past years. In particular, a tremendous effort has been dedicated to the inclusion of heterogeneity, which is a common yet mathematically challenging feature of more realistic ecological models~\cite{ShiKa}. Among the vast literature, we refer in particular to~\cite{BH-front,BHNn08,BHNi-I,BHH-periodic_fragmented,Wein02} for generalizations of the results mentioned above in the spatially periodic framework, in which we will also place ourselves below. Let us also mention that further heterogeneous problems, as well as other types of nonlinearities have also been studied: we refer again to~\cite{BH-front} and the references therein, as well as to the thorough reviews~\cite{Ber02-adv,xin00}.\\

More recently, Berestycki, Roquejoffre and Rossi proposed a new model specifically devised for studying the role of roads in biological invasions~\cite{BRR_influence_of_a_line}. This issue was motivated by the empirical observation that not only diseases, but also various ecological species, tend to spread faster along roads and other transport lines. 

Their model is a system of two parabolic equations, whose unknowns $u(t,x)$ and $v(t,x,y)$ should be interpreted as the density of a same population but on different domains, respectively the road and the field:
\begin{equation}
\left\{
\begin{array}{ll}
\partial_t u-D\partial_x^2 u=\nu(x)v(t,x,0)-\mu(x) u, & t>0,x\in\R ,\\
\partial_t v-d\Delta v=f(v), & t>0,(x,y)\in\Omega ,\\
-d\partial_y v(t,x,0)=\mu (x) u- \nu (x) v(t,x,0), & t>0,x\in\R .
\end{array}
\right. 
\label{eq:model}
\end{equation}
From now on and consistently with this interpretation, we will refer to the upper half plane $\Omega = \{(x,y) \in \R^2 \, : \ y >0\}$ as the field, and to the line $\{ (x,0) \, : \ x \in \R\}$ as the road. 

In~\cite{BRR_influence_of_a_line} as well as throughout this work, the field equation (second line of \eqref{eq:model}) is still assumed to be of the KPP type in the sense that the function $f$ satisfies~\eqref{hyp:KPP}. However, it is coupled with the road equation (first line of \eqref{eq:model}) through a Robin type boundary condition at $y=0$ (last line of \eqref{eq:model}). Roughly speaking, $\mu u$ is the proportion of individuals jumping from the road to the field, and $\nu v$ the proportion of individuals jumping from the field to the road. Note that the field boundary condition is normalized so that, when no reproduction takes place ($f \equiv 0$), then the total mass of the population is conserved (we refer to~\cite{BRR_influence_of_a_line} for a complete argument).

When $\mu$ and $\nu$ are positive constants, Berestycki, Roquejoffre and Rossi~\cite{BRR_influence_of_a_line} proved that the solution (with compactly supported initial data) spreads along the road with some speed $c^* (D) \geq c^*_{KPP} = 2\sqrt{d f'(0)}$ and that, moreover, this inequality is strict if and only if $D >2d$. This in particular means that, to accelerate the propagation, fast diffusion on the road is enough even though individuals do not reproduce there. One may also note that this diffusion threshold $D =2d$ for acceleration does not depend on the values of the constants $\mu$ and $\nu$, an observation which will be strengthened by our results.

A more general formula for the diffusion threshold, including the case when transport or reproduction take place on the road, was given in~\cite{BRR_plus}: in particular, when reproduction rates are identical on the road and in the field, then acceleration occurs as soon as $D>d$. Related results were also obtained in various frameworks such as fractional diffusion~\cite{BCoulonRR}, ignition type nonlinearities~\cite{Dietrich1,Dietrich2} and nonlocal exchange~\cite{Pauthier}. The issue of the spreading speed in oblique directions away from the road was investigated in~\cite{BCoulonRR,BRR_shape_of_expansion} and revealed interesting properties where the shape of the propagation is no longer dictated by a Huygens principle, and may not even be convex. Lastly, we refer to~\cite{Tellini} where the restriction of the KPP problem~\eqref{eq:model} to a truncated field, with a Dirichlet boundary condition and constant exchange coefficients, was studied. It turns out that such a truncation plays an essential role in 
our arguments.

While all the above papers dealt with systems that are invariant in the $x$-direction (let us point out here that the presence of the road itself entails strong heterogeneity with respect to the $y$-direction), our work will stand in a more general spatially periodic framework where the results of~\cite{BRR_influence_of_a_line} (as well as some of~\cite{BRR_plus,Tellini}) will be extended. Even though the heterogeneity will be limited to the exchange terms, we believe our arguments could be extended to more general periodic systems and will briefly discuss this in the last section.

\subsection{Main results}

From now on, we consider the system~\eqref{eq:model} where $D$, $d>0$, the function $f$ satisfies the KPP assumption~\eqref{hyp:KPP}, and $\mu(x),\nu(x)$ are $L$-periodic exchange coefficients in $C^{1,r} (\R)$ such that
$$\mu \geq \not \equiv 0 , \quad \nu \geq \not \equiv 0.$$
As mentioned above the main difference with the original model of~\cite{BRR_influence_of_a_line} lies in the $x$-dependence of the exchange terms, which is motivated by the ubiquity of heterogeneity in the applications. The periodicity provides a suitable mathematical framework, while still requiring a new approach avoiding explicit computations. We also point out that $\mu$ and $\nu$ may occasionally be 0, which of course could not happen in the homogeneous exchange case. This can be interpreted as the presence of "walls" blocking locally the motion of individuals between the field and the road (but not necessarily both ways simultaneously).

For convenience, we introduce the notations
$$
\mu_0 := \min \mu \leq \mu(x)\leq \mu_1:= \max \mu, \quad  \nu_0:= \min \nu \leq \nu(x)\leq \nu_1 := \max \nu.
$$
The system~\eqref{eq:model} is supplemented with bounded and non-negative initial data
\begin{equation}
\left\{
\begin{array}{ll}
u|_{t=0}=u_0 & \text{in } \R,\\
v|_{t=0}=v_0 & \text{in } \Omega .
\end{array}
\right.
\label{eq:initial_data}
\end{equation}
As will be shown in Section~\ref{sec:cauchy_stationary}, this problem is well-posed under generic conditions on the initial data.\\

In this paper, we take interest in the large time behavior of the solution and more specifically in its spreading speed in the $x$-direction or, in other words, along the road. By spreading, we mean here that the solution should converge to a positive steady state on some expanding in time domains. Therefore, we will begin with a Liouville type result to describe such positive steady states:
\begin{theo}
There exists a unique positive and bounded stationary solution $(U,V)$. This solution is $L$-periodic in $x$, has a positive infimum and satisfies $V(x,+\infty)\equiv 1$ where the limit is uniform with respect to $x \in \R$.
\label{main:liouville}
\end{theo}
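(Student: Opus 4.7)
The plan is to prove the theorem in three steps: first, construct at least one positive bounded stationary solution by a sub- and super-solution scheme combined with a passage to the limit in the $y$-variable; then establish uniqueness via a sliding argument based on the strict KPP monotonicity of $f(v)/v$; and finally derive the $L$-periodicity, the positive infimum, and the limit $V(x,+\infty)\equiv 1$ from uniqueness and comparison.

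For the \emph{super-solution}, I would try the ansatz $\overline{V}(x,y)=M(1+e^{-\alpha y})$ together with $\overline{U}(x)=M\widetilde{U}(x)$, where $\widetilde{U}$ is the unique $L$-periodic solution of $-D\widetilde{U}''+\mu\widetilde{U}=2\nu$ (which exists since $\mu\not\equiv 0$). The field inequality $-d\Delta\overline{V}\geq f(\overline{V})$ holds in $\Omega$ for $M$ sufficiently large thanks to the KPP superlinear decay $f(v)/v\to-\infty$; the road equation holds with equality by construction; and the Robin condition at $y=0$ reduces to $\mu\widetilde{U}\leq d\alpha+2\nu$ pointwise, which is achieved by picking $\alpha$ large. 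For the \emph{sub-solution}, I would introduce truncated half-strips $\Omega_R=\R\times(0,R)$ with Dirichlet data $V=1$ at $y=R$, and use a small multiple of a positive principal eigenfunction of the linearization at $0$; this amounts to checking that the generalized principal eigenvalue the paper constructs is negative for $R$ large enough, which is natural since the zero state is unstable under the KPP reaction. Sub- and super-solution theory on the bounded domain then produces an $L$-periodic classical stationary solution $(U_R,V_R)$, and monotonicity in $R$ together with standard elliptic estimates gives, in the limit $R\to+\infty$, a positive bounded $L$-periodic stationary solution $(U,V)$ of the original system.

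For \emph{uniqueness}, given two positive bounded stationary solutions $(U_1,V_1)$ and $(U_2,V_2)$, I would set
\[
k^\ast:=\sup\{k>0:\,kU_1\leq U_2\text{ on }\R\text{ and }kV_1\leq V_2\text{ on }\overline\Omega\},
\]
which is finite and positive since the solutions are bounded and, thanks to $V_i(x,+\infty)=1$, bounded away from $0$ at infinity. Supposing by contradiction $k^\ast<1$, the pair $(k^\ast U_1,k^\ast V_1)$ satisfies the road equation and the Robin coupling exactly (both being linear in $(U,V)$) but is a \emph{strict} sub-solution of the field equation, because the strict KPP inequality $f(k^\ast v)>k^\ast f(v)$ holds for every $v$ in the range of $V_1$. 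Setting $W=V_2-k^\ast V_1$ and $w=U_2-k^\ast U_1$, the strong maximum principle in $\Omega$, the Hopf lemma at $y=0$, and the strong maximum principle on the periodic road, combined through the coupling, yield $W>0$ on $\overline\Omega$ and $w>0$ on $\R$, with a uniform positive gap (using periodicity in $x$ together with the fact that $W\to 1-k^\ast>0$ at $y=+\infty$). This contradicts the maximality of $k^\ast$, so $k^\ast\geq 1$; by symmetry $k^\ast=1$ and hence $(U_1,V_1)\equiv(U_2,V_2)$.

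Uniqueness then immediately yields $L$-periodicity, since $(U(\cdot+L),V(\cdot+L,\cdot))$ is another positive bounded stationary solution. Positivity of the infimum follows from the strong maximum principle on one periodic cell combined with the limit $V\to 1$ at $y=+\infty$. Finally, the uniform convergence $V(x,+\infty)\equiv 1$ can be obtained by squeezing $V$ between two $x$-independent sub- and super-solutions $V^\pm(y)$ defined on $(0,+\infty)$ that solve one-dimensional KPP problems with constant Robin data extracted from $\mu_0,\mu_1,\nu_0,\nu_1$ and that both tend to $1$ as $y\to+\infty$. The \emph{main obstacle} is the construction of the sub-solution in the non-compact half-plane: because the linearization at $0$ on $\Omega$ is not a standard elliptic eigenvalue problem, one cannot appeal to classical theory, which is precisely what motivates the introduction of the generalized principal eigenvalue alluded to in the abstract; a secondary difficulty arises when $\mu$ or $\nu$ vanishes somewhere, so that the road operator degenerates and the Robin coupling becomes one-sided, forcing a more local use of the strong maximum principle and Hopf lemma.
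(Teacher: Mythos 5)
Your overall plan (super/sub-solutions for existence, a sliding argument for uniqueness, periodicity and positivity as consequences) is a sound one, but the logical order of your steps contains a circularity that would sink the uniqueness argument as written. To have $k^*\in(0,\infty)$ you need $\inf_{\R} U_i>0$ and $\inf_{\overline{\Omega}}V_i>0$; knowing only that $V_i(\cdot,+\infty)\equiv 1$ controls the behaviour at large $y$ and leaves open the possibility that $V_2/V_1$ or $U_2/U_1$ tends to $0$ along a sequence $|x_n|\to\infty$ near the road. You defer the positive-infimum property until after uniqueness, deriving it from $L$-periodicity on a ``periodic cell''; but that periodicity is itself a corollary of uniqueness. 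The same circularity reappears when you invoke ``periodicity in $x$'' to upgrade the strict inequality $W>0$ to a uniform gap contradicting the maximality of $k^*$: an arbitrary positive bounded stationary solution is not known to be periodic at that point.

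The paper avoids this by establishing $\inf_{\R}U>0$ and $\inf_{\overline{\Omega}}V>0$ (Lemma~\ref{lem:positivity}) \emph{before} uniqueness, and without assuming periodicity of $(U,V)$: only periodicity of the coefficients $\mu,\nu$ is used. If $V(x_n,y_n)\searrow 0$ with $y_n\to 0$, one shifts by $x_n$, passes to a limit using elliptic estimates and the periodicity of $\mu,\nu$, and obtains a contradiction with Hopf's lemma at the boundary touching point (and similarly for $U$); the limit $V(\cdot,+\infty)\equiv 1$ is then an immediate consequence of the positive infimum. The uniqueness step handles the lack of compactness in $x$ by the same shift-and-extract device rather than by periodicity. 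Your existence step is also more fragile than necessary: passing through truncated strips with Dirichlet data $V|_{y=R}=1$ requires well-posedness and uniqueness of the truncated problem, and the claimed monotonicity in $R$ is not justified and not obviously true. The paper instead builds a compactly supported subsolution (a small Dirichlet bubble of $-d\Delta$ placed in a ball away from the road, with $\underline U\equiv 0$), evolves the parabolic Cauchy problem from it, and uses time-monotonicity and the a priori supersolution to obtain a nontrivial stationary limit directly, with no dependence on boundary data at $y=R$.
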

Unlike in the homogeneous exchange case, it is not possible to explicitly compute this positive steady state and, therefore, both its existence and uniqueness will be inferred from the (linear) instability of 0 and the KPP assumption. The proof, which will be performed in Section~\ref{sec:liouville42}, relies on the fact that the road-field system satisfies a comparison principle, namely Propositions~\ref{prop:comparison1} and~\ref{prop:comparison2}. Indeed, although it is non-standard due to the 1D-2D coupling through a boundary condition, this model shares many features with monotone systems. We refer the reader to~\cite{LiangZhao} and the references therein for a study of abstract monotone systems.

From the proof of Theorem~\ref{main:liouville} it will be clear that this steady state is stable. One may thus expect it to attract solutions of \eqref{eq:model}, leading to the spreading dynamics that we wish to investigate:
\begin{theo}\label{main:spread1}
There exists $c^* (D) \geq c^*_{KPP} = 2 \sqrt{d f'(0)} >0$ such that any solution $(u,v)$ of \eqref{eq:model}-\eqref{eq:initial_data} with non-negative, continuous and compactly supported initial data $(u_0,v_0) \not \equiv (0,0)$ satisfies:
\begin{itemize}
\item for all $c > c^* (D)$ and $R>0$,
$$\lim_{t \to +\infty}\   \sup_{x \leq -ct\; ,\; 0 \leq y \leq R} ( u (t,x) + v(t,x,y)) = 0;$$
\item for all $0 < c < c^*  (D)$ and $ R >0$,
$$\lim_{t \to +\infty} \  \sup_{0 \geq x \geq -ct\; ,\; 0 \leq y \leq R } \left( | u(t,x) - U (x) | + | v(t,x,y) - V(x,y)| \right) = 0.$$
\end{itemize}
\end{theo}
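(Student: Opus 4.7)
The plan is to follow the classical KPP framework for spreading speeds, adapted to the 1D--2D coupled road-field system. First, I would associate with the linearisation of \eqref{eq:model} at $(u,v) = (0,0)$ a family of generalised principal eigenvalues $k(\lambda)$, $\lambda > 0$, by seeking exponential solutions of the form
$$u(t,x) = e^{-\lambda(x+ct)} \phi(x), \qquad v(t,x,y) = e^{-\lambda(x+ct)} \psi(x,y),$$
with $\phi, \psi$ that are $L$-periodic in $x$ and with $\psi$ integrable as $y \to +\infty$. This produces a coupled elliptic eigenproblem in one and two space variables, linked by the Robin-type condition at $y=0$, whose principal eigenvalue $k(\lambda)$ (with a positive eigenfunction) I would obtain by first solving the analogous problem on a truncated strip $\R \times (0,h)$ with a Dirichlet condition at $y = h$ via Krein--Rutman in this compact periodic setting, and then letting $h \to +\infty$. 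The candidate spreading speed is then $c^*(D) := \min_{\lambda > 0} k(\lambda)/\lambda$, and testing against the planar exponential $\psi(x,y) = e^{-\gamma y}$ decoupled from the road gives $c^*(D) \geq c^*_{KPP}$.

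The upper bound (first bullet of the theorem) follows once $c^*(D)$ is in place. For $c > c^*(D)$, pick $\lambda > 0$ with $k(\lambda)/\lambda = c$ and use
$$(\overline u, \overline v)(t,x,y) = A \bigl( e^{-\lambda(x+ct)} \phi(x), \; e^{-\lambda(x+ct)} \psi(x,y) \bigr)$$
as a supersolution: it satisfies the linearised road, field and Robin equations by construction, hence the nonlinear system by the KPP inequality $f(v) \leq f'(0)v$. Since $(\phi, \psi)$ has positive infima on compact sets and $(u_0,v_0)$ is compactly supported, $A$ can be chosen so that $(\overline u, \overline v) \geq (u_0, v_0)$ at $t = 0$, and Propositions~\ref{prop:comparison1}--\ref{prop:comparison2} yield the uniform decay to zero on $\{x \leq -ct,\ 0 \leq y \leq R\}$.

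The lower bound (second bullet) is more delicate and constitutes the main obstacle. Given $c < c^*(D)$, pick $\lambda_0$ with $k(\lambda_0)/\lambda_0 > c$; by continuity of $h \mapsto k_h(\lambda_0)$ the truncated eigenvalue $k_h(\lambda_0)$ also satisfies $k_h(\lambda_0) > c\lambda_0$ for $h$ large. Multiplying the truncated eigenfunction by a smooth compactly supported cutoff in the moving variable $\xi = x + ct$ and damping the amplitude by a small $\varepsilon > 0$ produces a compactly supported subsolution moving at speed $c$, the spectral gap absorbing the localisation error and the KPP inequality absorbing the nonlinear remainder. A parabolic Harnack estimate guarantees that any nontrivial solution is bounded below by $\varepsilon$ on a fixed compact set after a finite time, so a translate of the subsolution sits below $(u,v)$; iterating in time and invoking the $L$-periodicity in $x$ forces a uniform positive lower bound on $\{-ct \leq x \leq 0,\ 0 \leq y \leq R\}$. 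Convergence to $(U,V)$ then follows from Theorem~\ref{main:liouville}: from any small stationary subsolution the parabolic flow is monotone increasing and trapped beneath $(U,V)$, hence converges to a stationary solution which by uniqueness must equal $(U,V)$; a sliding argument upgrades this to uniform convergence on the expanding window. The genuine technical difficulty is really the construction and continuity of $k(\lambda)$ and $k_h(\lambda)$: the Robin-type coupling between road and field makes the spectral problem non-standard and requires a careful choice of functional framework to apply Krein--Rutman and to pass to the limit $h \to +\infty$.
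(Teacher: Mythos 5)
Your overall architecture matches the paper quite closely: truncated Krein--Rutman eigenvalue $\Lambda_R(\alpha)$, passage $R\to\infty$ to a generalized eigenvalue, the formula $c^*(D)=\min_{\alpha>0}(-\Lambda(\alpha))/\alpha$, exponential supersolutions for the outer estimate, and the Liouville theorem plus a compactly supported subsolution trapped in a moving window for the inner estimate. That much is essentially the paper's route. Two points, however, need attention.

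\smallskip

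\emph{The subsolution construction is where the proof really lives, and the cutoff sketch does not close.} You propose to multiply the truncated eigenfunction $e^{-\lambda(x+ct)}\phi(x)\psi(x,y)$ by a smooth compactly supported cutoff $\chi(\xi)$, $\xi=x+ct$, and to absorb the resulting error by the spectral gap. The difficulty is that the commutator terms produced by the cutoff, which involve $\chi'$ and $\chi''$, are not dominated by $\chi$ near the boundary of $\operatorname{supp}\chi$: as $\chi\to 0$ there while $\chi',\chi''$ do not, the error $|\chi''\phi + 2\chi'\phi'|$ cannot be controlled by $(\text{gap})\cdot\chi\phi$ pointwise, so the resulting pair is not a subsolution of the moving-frame linear problem, however small the amplitude is taken. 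This is exactly why the paper does not use a generic cutoff. Instead it extends $\alpha\mapsto(\Lambda_R(\alpha),U_{\alpha,R},V_{\alpha,R})$ holomorphically (Proposition~\ref{prop:analyticity}), applies Rouch\'e's theorem to produce a genuinely complex $\alpha(c)=\rho+i\omega$ with $-\Lambda_R(\alpha)=c\alpha$ for $c$ slightly below $c^*_R$ (Lemma~\ref{lem:Rouche}), and takes $\re\bigl(e^{\alpha(x+ct)}(U_R,V_R)\bigr)$. The oscillatory factor $\cos(\omega\xi+\cdot)$ is then an \emph{exact} solution profile, not a cutoff of one, and it naturally changes sign at $\xi=\pm\pi/|\omega|$, producing the required sign condition on $\partial E$, $\partial F$ for the generalized comparison principle, Proposition~\ref{prop:comparison2}. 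In the $y$-direction the compact support is free from the Dirichlet condition at $y=R$. If you want to avoid complex eigenvalues altogether, the alternative that does work is to replace your cutoff by the principal Dirichlet eigenfunction of the moving-frame operator on a large box $[-h,h]\times[0,R]$ (periodicity in $x$ replaced by Dirichlet); that eigenfunction's boundary vanishing is matched by the operator exactly, so there is no uncontrolled localisation error. But a ``smooth compactly supported cutoff'' of a periodic eigenfunction is not a substitute.

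\smallskip

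\emph{A smaller inaccuracy: the inequality $c^*(D)\geq c^*_{KPP}$.} You justify it by ``testing against the planar exponential $\psi(x,y)=e^{-\gamma y}$ decoupled from the road''. A planar supersolution of the field equation alone gives information in the wrong direction (an upper bound on $-\Lambda$, not a lower bound), and it need not respect the Robin coupling at $y=0$. The paper instead establishes directly the lower bound $-\Lambda(\alpha)\geq d\alpha^2+f'(0)$ for every $\alpha$ (see Proposition~\ref{prop:monotonicity_wrt_R} and~\eqref{eigen_bounds_whole}), by integrating the $V$-equation of the truncated eigenproblem over the torus in $x$: the average $\Phi_R(y)=\int_{\mathbb T}V_{\alpha,R}$ solves an ODE whose sign and Dirichlet conditions at $y=R$ force $\Lambda_R(\alpha)+d\alpha^2+f'(0)<d\pi^2/R^2$, and one lets $R\to\infty$. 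This averaging step is also the tool used later in Proposition~\ref{prop:eigen_enhance} to locate the threshold $D=2d$, so it is worth having the argument in the correct form.
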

The above statement deals with spreading in the left direction. As the symmetric problem $x\leftrightarrow -x$ satisfies the same assumptions, there clearly also exists a spreading speed in the right direction. However, the left and right spreading speeds may be different. 

The proof is directly inspired by the single (spatially periodic) equation. It relies on the construction of a family of principal eigenvalues denoted by $\Lambda (\alpha)$, which arises when looking for exponential solutions
$$
e^{\alpha (x+ct)} (U,V)
$$
of the linearized problem around the invaded unstable state $(u=0,v=0)$, where $\alpha >0$ and $U (x)$, $V(x,y)$ are positive and $L$-periodic with respect to $x$. Due to the unboundedness of the domain, the usual notion of principal eigenvalue is not uniquely defined and there exist several definitions of "generalized" principal eigenvalues~\cite{BR-general_eigen}. Our generalized principal eigenvalue $\Lambda (\alpha)$ (see Theorem~\ref{th:eigenvalue} for its main properties) will be constructed in Section~\ref{sec:linear_pb} by truncating the field in the $y$-direction and passing to the limit to the whole road-field domain.

Then in Section~\ref{section:spreading}, we will prove Theorem~\ref{main:spread1} and show that the spreading speed $c^* (D)$ is characterized by the following formula:
$$c^* (D) = \min_{\alpha >0} \frac{-\Lambda (\alpha)}{\alpha}.$$
This is the natural extension of a similar formula for the minimal wave speed of the single spatially periodic equation~\cite{BH-front,BHNi-I}, which in the homogeneous case \eqref{eq:KPP} easily reduces to
\begin{equation}\label{eq:almost}
c^*_{KPP} = \min_{\alpha >0} \frac{d\alpha^2 + f'(0)}{\alpha}.
\end{equation}

Therefore, whether the road accelerates the propagation or not depends on how $-\Lambda (\alpha)$ compares to $d\alpha^2 + f'(0)$ (note that it is already stated in Theorem~\ref{main:spread1} that $c^* (D) \geq c^*_{KPP}$). In fact we will prove that $-\Lambda (\alpha)$ is strictly larger than $d\alpha^2 + f'(0)$ if and only if $D>d$ and $\alpha > \sqrt{\frac{f'(0)}{D-d}}$, see Proposition~\ref{prop:eigen_enhance}. Recalling that $\alpha$ denotes the exponential decay of the ansatz as $x \to - \infty$, the formal argument reads as follows: when looking at the linearized problem \eqref{eq:principal_eigenvalue} later on, the intrinsic growth rate of the road population is $D \alpha^2$ (due to motion of road individuals in the original problem) and, in the field, it is $d\alpha^2 + f'(0)$ (due to both the motion of individuals in the field and their reproduction). Thus, one may indeed expect the road to lead the propagation if and only if $\alpha > \sqrt{\frac{f'(0)}{D-d}}$. Notice then that, as the 
exponential 
decay of the KPP traveling wave with minimal speed is $\sqrt{\frac{f'(0)}{d}}$, which is also where the minimum in \eqref{eq:almost} is reached, the diffusion threshold $D=2d$ immediately arises.

The above argument will be made rigorous in Section~\ref{sec:accelerate}, leading to the following theorem:
\begin{theo}\label{main:speedup}
The strict inequality $c^* (D) > c^*_{KPP}$ holds if and only if $D >2d$.

Furthermore,$$0 < \liminf_{D \to \infty} \frac{ c^*(D)}{\sqrt{D}} \leq \limsup_{D \to \infty} \frac{ c^*(D)}{\sqrt{D}} < +\infty.$$
\end{theo}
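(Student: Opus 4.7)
I would rely entirely on the variational formula $c^*(D)=\min_{\alpha>0}-\Lambda(\alpha)/\alpha$ recalled just before the statement, together with Proposition~\ref{prop:eigen_enhance}. Throughout, let $\alpha_0:=\sqrt{f'(0)/d}$ denote the unique minimizer of the convex function $g(\alpha):=(d\alpha^2+f'(0))/\alpha=d\alpha+f'(0)/\alpha$, so that $g(\alpha_0)=c^*_{\mathrm{KPP}}$ and $g(\alpha)>c^*_{\mathrm{KPP}}$ for every $\alpha\neq\alpha_0$. A direct computation shows that $\alpha_0\leq\sqrt{f'(0)/(D-d)}$ is equivalent to $D\leq 2d$ (with the convention that the threshold is $+\infty$ when $D\leq d$). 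One further key input, which I would extract from the eigenvalue construction of Section~\ref{sec:linear_pb} rather than from Proposition~\ref{prop:eigen_enhance}, is the complementary lower bound $-\Lambda(\alpha)\geq d\alpha^2+f'(0)$: indeed a separated sub-solution $e^{\alpha x+\gamma y}$ in the field alone already attains the growth rate $d\alpha^2+f'(0)$, and the road cannot slow propagation down. Combined with Proposition~\ref{prop:eigen_enhance}, this forces the identity $-\Lambda(\alpha)=d\alpha^2+f'(0)$ throughout the field-driven regime.

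For the threshold statement, if $D\leq 2d$ then $\alpha_0$ lies in the field-driven regime, so $-\Lambda(\alpha_0)=d\alpha_0^2+f'(0)$ and therefore $c^*(D)\leq-\Lambda(\alpha_0)/\alpha_0=c^*_{\mathrm{KPP}}$; with the reverse inequality from Theorem~\ref{main:spread1}, we conclude. If instead $D>2d$ then $\sqrt{f'(0)/(D-d)}<\alpha_0$, and I would argue that $-\Lambda(\alpha)/\alpha>c^*_{\mathrm{KPP}}$ at \emph{every} $\alpha>0$: in the road-driven regime this follows from the strict inequality of Proposition~\ref{prop:eigen_enhance} combined with $g\geq c^*_{\mathrm{KPP}}$, while in the field-driven regime the identity gives $-\Lambda(\alpha)/\alpha=g(\alpha)$, which is strictly larger than $c^*_{\mathrm{KPP}}$ because $\alpha<\sqrt{f'(0)/(D-d)}<\alpha_0$. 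Since $-\Lambda(\alpha)/\alpha$ is continuous and blows up at $0^+$ and $+\infty$, the infimum is attained and hence strictly larger than $c^*_{\mathrm{KPP}}$.

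For the scaling as $D\to\infty$, the upper bound comes from testing the minimum at $\alpha_D:=\sqrt{f'(0)/(D-d)}$: by continuity at the interface between the two regimes, $-\Lambda(\alpha_D)=d\alpha_D^2+f'(0)=Df'(0)/(D-d)$, whence $c^*(D)\leq D\sqrt{f'(0)/(D-d)}\sim\sqrt{f'(0)\,D}$. For the lower bound I would establish a complementary road-driven estimate $-\Lambda(\alpha)\geq D\alpha^2-C$ uniformly in $\alpha>0$ and in large $D$, with $C=C(\mu_1,\nu_0)$; the derivation would go by dividing the first equation of the eigenvalue system by a positive principal eigenfunction $U$ and integrating over one period, so that periodicity kills the drift term, the diffusive contribution $\int D(U'/U)^2$ is nonnegative and can be dropped, and the exchange source $-\int\nu V(\cdot,0)/U$ is controlled by $-\mu_1$ after invoking the Robin boundary condition. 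Combined with $-\Lambda(\alpha)\geq d\alpha^2+f'(0)$, this gives $-\Lambda(\alpha)/\alpha\geq\max(D\alpha-C/\alpha,\,d\alpha+f'(0)/\alpha)$, whose minimum is attained at the crossing $\alpha\sim\sqrt{(f'(0)+C)/(D-d)}$ and scales as $\Theta(\sqrt{D})$ with asymptotic constant $f'(0)/\sqrt{f'(0)+C}>0$.

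The main obstacle is precisely this road-driven lower bound $-\Lambda(\alpha)\geq D\alpha^2-C$. In the homogeneous case of~\cite{BRR_influence_of_a_line} it is an elementary consequence of the dispersion relation $(\mu+\lambda-\alpha^2 D)(d\gamma+\nu)=\mu\nu$ for the $x$-independent ansatz, which produces the explicit identity $-\Lambda(\alpha)=\alpha^2D-\mu d\gamma/(d\gamma+\nu)\in[\alpha^2D-\mu,\,\alpha^2D]$. In the periodic setting, however, neither $U$ nor $V$ can be written down explicitly and the 1D--2D coupling through a Robin boundary condition precludes a clean self-adjoint variational framework, so the inequality must be squeezed out of carefully chosen weighted integrals against the unknown positive eigenfunctions. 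This is also why one cannot hope for a true limit of $c^*(D)/\sqrt{D}$, only for it to be pinched between two strictly positive finite constants depending on $\mu_0,\mu_1,\nu_0,\nu_1$.
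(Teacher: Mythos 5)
Your argument is correct and follows essentially the same route as the paper: the threshold claim is deduced from Proposition~\ref{prop:eigen_enhance} by comparing $\alpha(D)=\sqrt{f'(0)/(D-d)}$ with the KPP minimizer $\alpha_0=\sqrt{f'(0)/d}$, the $\limsup$ bound comes from testing $\alpha=\alpha(D)$, and the $\liminf$ bound from the road-driven pointwise estimate. The one place your assessment differs from the paper is your worry that the road-driven lower bound $-\Lambda(\alpha)\ge D\alpha^2-C$ is a ``main obstacle'' still to be overcome: in fact it is already established in Proposition~\ref{prop:monotonicity_wrt_R} and recorded in~\eqref{eigen_bounds_whole}, which give $-\Lambda(\alpha)\ge D\alpha^2-\lambda_\alpha\ge D\alpha^2-\mu_1$, with a constant that depends only on $\mu_1$ (not on $\nu_0$, contrary to what you suggest). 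The paper derives it by substituting $z=U_{\alpha,R}/\tilde U_\alpha$, where $\tilde U_\alpha$ is the periodic principal eigenfunction for $-D\tfrac{d^2}{dx^2}-2\alpha D\tfrac{d}{dx}+\mu$, and applying the maximum principle; your alternative derivation — divide the road equation by $U>0$, integrate over $\mathbb{T}$ so that $\int U'/U=0$ and $\int U''/U=\int(U'/U)^2\ge 0$, and drop the nonnegative source $\int\nu V(\cdot,0)/U$ — also works and directly produces $-\Lambda\ge D\alpha^2-\tfrac{1}{L}\int\mu\ge D\alpha^2-\mu_1$, without even needing the Robin boundary condition. So there is no gap; your proof is complete once you notice the bound is already at hand.
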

We highlight the fact that we recover the exact same diffusion threshold $D=2d$ as in~\cite{BRR_influence_of_a_line}, not only in spite of the heterogeneous exchange, but also in spite of the fact that $\mu$ and $\nu$ may occasionally be 0.\\

The above discussion also leads one to expect that if the initial data are slowly exponentially decaying as $x \to -\infty$, then the solution may no longer be accelerated by the road. This can be easily interpreted since the effect of diffusion gets weaker as the solution is relatively flat. More generally we will prove in Section~\ref{sec:exp_case} that, as in the single equation case, the family of eigenvalues $-\Lambda (\alpha)$ also characterizes the spreading speed of solutions of \eqref{eq:model} with any exponential decay:

\begin{prop}\label{exp_decay_case}
Let $\alpha^*>0$ be the smallest solution of $c^* (D) \alpha = -\Lambda (\alpha)$, and $(u,v)$ be the solution of \eqref{eq:model}-\eqref{eq:initial_data} with non-negative, bounded and continuous initial data $(u_0,v_0)$ satisfying also
$$
\forall \,x\leq 0:\qquad \left\{
\begin{array}{l}
m(0) e^{\alpha x} \leq u_0 (x) \leq M e^{\alpha x} ,\\
m(y) e^{\alpha x} \leq v_0 (x,y) \leq M e^{\alpha x},
\end{array}
\right.
$$
for some $0 < \alpha < \alpha^*$, $M>0$, and $m$ a continuous and positive function of $y \in [0,\infty)$.

Then $(u,v)$ spreads along the road (in the same sense as in Theorem~\ref{main:spread1}) with speed
$$c (\alpha) := \frac{-\Lambda (\alpha)}{\alpha}.$$
In particular, if either $D \leq d$ or $D>d$ and $\alpha \leq \sqrt{\frac{f'(0)}{D-d}}$, then $c (\alpha) = d \alpha + \frac{f'(0)}{\alpha}$ is also the speed of solutions of~\eqref{eq:KPP} with exponential decay of order $e^{\alpha x}$.
\end{prop}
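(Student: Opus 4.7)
The plan is to adapt the classical exponential-spreading argument for the scalar KPP equation to the road–field setting using the generalized principal eigenvalue $\Lambda(\alpha)$ with its positive $L$-periodic eigenfunction $(U_\alpha,V_\alpha)$ from Section~\ref{sec:linear_pb}, combined with the spreading result Theorem~\ref{main:spread1} for compactly supported data.

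\emph{Upper bound} ($c>c(\alpha)$). The pair
\[
(\overline u,\overline v)(t,x,y):=K\,e^{\alpha(x+c(\alpha)t)}\bigl(U_\alpha(x),V_\alpha(x,y)\bigr)
\]
is an exact solution of the system linearized at $(0,0)$ because $\alpha c(\alpha)=-\Lambda(\alpha)$, and it is upgraded to a supersolution of~\eqref{eq:model} by the KPP inequality $f(v)\le f'(0)v$. For $K$ large enough, the bounds on $(u_0,v_0)$ (exponential for $x\le 0$, uniform for $x\ge 0$) combined with the positivity of $(U_\alpha,V_\alpha)$ give $(\overline u,\overline v)(0,\cdot)\ge(u_0,v_0)$, and the comparison principle from Propositions~\ref{prop:comparison1} and~\ref{prop:comparison2} yields $(u,v)\le(\overline u,\overline v)$. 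On $\{x\le -ct\}$ the factor $e^{\alpha(x+c(\alpha)t)}\le e^{-\alpha(c-c(\alpha))t}$ vanishes uniformly in $t$, which settles the first claim on every strip $\{0\le y\le R\}$.

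\emph{Lower bound} ($0<c<c(\alpha)$). The strategy is to build a subsolution of the form
\[
(\underline u,\underline v)(t,x,y):=A\,e^{\alpha(x+c(\alpha)t)}(U_\alpha,V_\alpha)-B\,e^{\alpha'(x+c(\alpha)t)}(\tilde U,\tilde V),
\]
truncated to its positivity set, with $\alpha<\alpha'<\min(\alpha^*,(1+r)\alpha)$ and $(\tilde U,\tilde V)$ the $L$-periodic principal eigenfunction associated with $\Lambda(\alpha')$. The minimality of $\alpha\mapsto-\Lambda(\alpha)/\alpha$ at $\alpha^*$ yields $c(\alpha')<c(\alpha)$, so the correction term generates a strictly positive defect in the linearized operator; together with the constraint $\alpha'<(1+r)\alpha$ and the H\"older regularity $f\in C^{1,r}$, this defect absorbs the nonlinear residual $|f(v)-f'(0)v|=O(v^{1+r})$ on the positivity set once $B$ is large. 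The Robin exchange condition at $y=0$ is automatically inherited from the two eigenfunctions. A temporal shift together with the lower bound $(u_0,v_0)\ge m(y)e^{\alpha x}$ on $x\le 0$ and a small amplitude $A$ ensures $(\underline u,\underline v)(0,\cdot)\le(u_0,v_0)$, so the comparison principle gives $(u,v)\ge(\underline u,\underline v)$. By $L$-periodicity of the four profiles, the restriction of $(\underline u,\underline v)$ to any line $x+c(\alpha)t=\xi$ strictly inside the positivity set is periodic in $t$ with positive infimum $\eta(\xi,R)>0$ on $\{0\le y\le R\}$. This uniformly positive moving bump at $x=-c(\alpha)\tau+\xi$, available at every time $\tau\ge 0$, is then used as compactly supported initial data for Theorem~\ref{main:spread1} restarted at time $\tau$: the resulting solution converges to $(U,V)$ at speed $c^*(D)$ emanating from $-c(\alpha)\tau+\xi$. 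Because $c<c(\alpha)$, for $t$ large any target point $(t,x)$ with $-ct\le x\le 0$ is eventually covered by such a spreading cone with an appropriate choice of $\tau=\tau(t,x)\in[0,t]$, and a standard compactness argument upgrades pointwise convergence to the uniform convergence claimed on $\{-ct\le x\le 0,\,0\le y\le R\}$.

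The main technical obstacle is the sub-solution construction: the Robin road–field coupling ties together all four profiles $U_\alpha,V_\alpha,\tilde U,\tilde V$ at $y=0$, the parameters $A,B,\alpha'$ must be tuned simultaneously to dominate the nonlinear residual on the positivity set and to sit under $(u_0,v_0)$ at $t=0$, and the possible non-decay of $V_\alpha,\tilde V$ in $y$ may necessitate a further $y$-truncation mirroring the one used to construct $\Lambda(\alpha)$ in Section~\ref{sec:linear_pb}. Finally, by Proposition~\ref{prop:eigen_enhance}, whenever $D\le d$ or $\alpha\le\sqrt{f'(0)/(D-d)}$ one has $-\Lambda(\alpha)=d\alpha^2+f'(0)$, hence $c(\alpha)=d\alpha+f'(0)/\alpha$, the scalar KPP exponential-decay spreading speed via~\eqref{eq:almost}.
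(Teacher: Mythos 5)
Your overall architecture — exponential supersolution above, a two-term exponential subsolution truncated at its positivity set below, then feeding the moving subsolution into the compactly-supported spreading Theorem~\ref{main:spread1} — is the same as the paper's. However, there are two concrete gaps.

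\emph{Upper bound.} The pair $K e^{\alpha(x+c(\alpha)t)}(U_\alpha,V_\alpha)$ cannot in general be placed above $(u_0,v_0)$ at $t=0$ by increasing $K$. The reason is explicitly flagged in the paper: by Proposition~\ref{prop:eigen_enhance} (and the averaging argument there), when $D>d$ and $\alpha>\sqrt{f'(0)/(D-d)}$ the eigenfunction $V_\alpha$ decays exponentially as $y\to+\infty$, so $K e^{\alpha x} V_\alpha(x,y)$ goes to $0$ while $v_0$ need only satisfy $v_0 \le M e^{\alpha x}$. No choice of $K$ gives domination uniformly in $y$. The paper repairs this by taking $c>c(\alpha)$ and using the \emph{sum} $M(\overline U_1,\overline V_1)+(\overline U_2,\overline V_2)$ with $(\overline U_1,\overline V_1)=e^{\alpha(x+ct)}(U_\alpha,V_\alpha)$ and $(\overline U_2,\overline V_2)=(0,e^{\alpha(x+ct)})$; the second piece trivially dominates $v_0$, and since $-\Lambda(\alpha)\ge d\alpha^2+f'(0)$, the constant $M$ can be calibrated (so that $M(\Lambda(\alpha)+c\alpha)\min U_\alpha\ge \nu_1$) to make the sum a genuine supersolution of \eqref{eq:model}.

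\emph{Lower bound.} You write the subsolution with the whole-field eigenfunctions $(U_\alpha,V_\alpha)$, $(\tilde U,\tilde V)$ and speed exactly $c(\alpha)$, and you only say that a $y$-truncation ``may'' be needed. In fact the truncation is essential, not optional, and it changes the statement: one must use the truncated eigenpairs $(U_{\alpha,R},V_{\alpha,R})$ and $(U_{\alpha+\eta,R},V_{\alpha+\eta,R})$ with the eigenvalue $\Lambda_R$ rather than $\Lambda$, because these vanish identically at $y=R$ and can be extended by zero to produce a generalized subsolution admissible for Proposition~\ref{prop:comparison2}, and because the initial lower bound $v_0\ge m(y)e^{\alpha x}$ only gives control for $y$ in a compact set (the paper makes no assumption that $m$ is bounded below). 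Once $\Lambda$ is replaced by $\Lambda_R$, the leading term has zero defect only for the speed $c=c_R(\alpha)=-\Lambda_R(\alpha)/\alpha$, which is strictly less than $c(\alpha)$; the proof then concludes by letting $R\to\infty$ so that $c_R(\alpha)\nearrow c(\alpha)$. Your write-up omits this $R$-dependence of the achieved speed. Aside from these two points — and modulo the routine checks that $\alpha'<\alpha(1+r)$ and that the positive defect of the correction term absorbs the $O(v^{1+r})$ nonlinear remainder — your plan matches the paper's.
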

Combining Theorem~\ref{main:spread1} and Proposition~\ref{exp_decay_case}, it is clear by a simple comparison argument that any solution which decays at least as $e^{\alpha^* x}$ as $x \to - \infty$ at time $t=0$ spreads with the speed $c^* (D)$. In particular, it turns out that the behavior of the initial data in the $y$-direction barely matters, which is in some sense natural but also not trivial as the eigenfunctions associated with $-\Lambda (\alpha)$ may decay as $y \to +\infty$.

\paragraph{Outline of the paper.} The paper is organized as follows: in Section~\ref{sec:cauchy_stationary} we investigate well-posedness of the Cauchy problem~\eqref{eq:model} and its stationary solutions. In Section~\ref{sec:linear_pb} we construct the principal eigenvalue of the linearized problem and the associated exponential solutions. Section~\ref{section:spreading} contains the proof of the main spreading Theorem~\ref{main:spread1}. We investigate in Section~\ref{sec:accelerate} the acceleration properties and the limit as $D\to\infty$ of large diffusion on the road, that is Theorem~\ref{main:speedup}, as well as the spreading properties for exponentially decaying initial data. 

Lastly we will discuss in Section~\ref{sec:extension} some possible extensions of our work. We will focus in particular on the truncated field case, i.e. the restriction of \eqref{eq:model} to the domain $\{ 0 \leq y < R\}$, and on the case when reproduction also takes place on the road. We will see that most of our results can be extended in both those frameworks with only minor modifications. We will then discuss the general spatially periodic problem and point out some remaining open questions.

\section{The Cauchy problem and stationary solutions}\label{sec:cauchy_stationary}

\subsection{Preliminaries: the evolution problem}

We begin by noting that the parabolic system satisfies the comparison principle. From now on, we say that $(u,v)$ is a subsolution (respectively supersolution) of \eqref{eq:model} if it is satisfied with inequalities $\leq$ (respectively $\geq$) instead of equalities, and if both functions $u$ and $v$ are continuous.

\begin{prop}\label{prop:comparison1}
Let $(\underline{u},\underline{v})$ and $(\overline{u},\overline{v})$ be respectively a subsolution bounded from above and a supersolution bounded from below such that $(\underline{u},\underline{v})\leq(\overline{u},\overline{v})$ at time $t=0$. 

Then, either $(\underline{u},\underline{v})<(\overline{u},\overline{v})$ for all $t>0$, or there exists $T>0$ such that $(\underline{u},\underline{v})\equiv(\overline{u},\overline{v})$ for all $t\leq T$. 
\end{prop}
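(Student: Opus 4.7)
The plan is to split the proof into a weak comparison followed by a strong maximum principle dichotomy. Setting $w := \overline u - \underline u$ and $z := \overline v - \underline v$, I would subtract the corresponding (super/sub) differential inequalities and use the local Lipschitz character of $f$ on the bounded range of the solutions (made available by the one-sided bounds together with the KPP sign of $f$ at infinity) to reduce the problem to the \emph{linear cooperative} system of inequalities
\begin{align*}
\partial_t w - D\,\partial_x^2 w + \mu(x) w - \nu(x) z(t,x,0) &\geq 0, \\
\partial_t z - d\,\Delta z - c(t,x,y)\,z &\geq 0, \\
-d\,\partial_y z(t,x,0) + \nu(x)\,z(t,x,0) - \mu(x) w &\geq 0,
\end{align*}
with $c \in L^\infty$, both $w$ and $z$ bounded from below, and $w, z \geq 0$ at $t = 0$. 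An exponential change of unknown $(\widetilde w, \widetilde z) := e^{-\lambda t}(w,z)$ with $\lambda$ large enough then turns the bulk zeroth-order coefficients strictly positive, while leaving the Robin coupling untouched.

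For the weak inequality I would argue by contradiction on a finite slab $[0,T]$. If $m := \inf \min(\widetilde w, \widetilde z) < 0$, the $L$-periodicity of $\mu, \nu$ allows me to translate any minimizing sequence in $x$ into a bounded window and to extract a limit by standard parabolic interior estimates; escape as $y \to +\infty$ is ruled out by comparison with a small positive constant, which is a supersolution of the $\widetilde z$-equation far from the road thanks to the positive zeroth-order term. A classical strong parabolic maximum principle applied to $\widetilde z$ (with coefficient $\lambda - c > 0$) rules out an interior minimum in $\{y>0\}$, leaving a minimum on the road. If this minimum is reached by $\widetilde w$ at $(t_\ast,x_\ast)$, then $\partial_t \widetilde w - D\partial_x^2 \widetilde w \leq 0$ there, so the first inequality together with $\widetilde z(\cdot,0) \geq m$ yields $(\lambda + \mu(x_\ast) - \nu(x_\ast))\,m \leq 0$, contradicting $m < 0$ for $\lambda$ large. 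If instead it is reached by $\widetilde z(\cdot,0)$, Hopf's boundary-point lemma forces $\partial_y \widetilde z(t_\ast,x_\ast,0) > 0$, while the Robin inequality combined with $\widetilde w \geq m$ produces the opposite sign after again absorbing $\lambda$.

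Once $w, z \geq 0$ is secured, the dichotomy is a coupled strong maximum principle. Suppose equality is reached for the first time at some $(t_0, x_0, y_0)$ with $y_0 > 0$: the Nirenberg strong maximum principle for the field inequality propagates $z \equiv 0$ on $[0,t_0] \times \R \times (0,\infty)$, hence $z(\cdot,0) \equiv 0$ and $\partial_y z(\cdot,0) \equiv 0$ by continuity, so the Robin condition reduces to $\mu(x) w \leq 0$ and forces $w \equiv 0$ on $\{\mu > 0\}$; the scalar strong parabolic maximum principle for the road equation then propagates $w \equiv 0$ to the entire road, using the $L$-periodicity of $\mu$ to ensure that $\{\mu>0\}$ intersects every period. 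A symmetric argument, invoking Hopf on the field to transfer vanishing from road to field through $\{\nu>0\}$, treats contact first reached on the road. The main obstacle is precisely this back-and-forth transfer through the Robin coupling when $\mu$ or $\nu$ may vanish on open $x$-sets: standard cooperative-system maximum principles do not apply directly, and the argument must alternate bulk propagation with boundary-point analysis, with the $L$-periodicity of the exchange rates ensuring that the vanishing ultimately spreads across the whole road and field.
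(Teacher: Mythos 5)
Your overall architecture is the right one and matches what the paper points to (the paper itself gives no proof but defers to~\cite{BRR_influence_of_a_line}, noting that the argument rests on the strong maximum principle, the parabolic Hopf lemma, and the sign hypotheses on $\mu,\nu$): subtract to obtain a linear cooperative system of parabolic/Robin inequalities, change variables by $e^{-\lambda t}$, run a weak comparison by contradiction on the infimum of $\min(\widetilde w,\widetilde z)$ using translations and Phragm\'en--Lindel\"of-type control in $y$, and then obtain the dichotomy by a coupled strong maximum principle. Your final paragraph (bulk strong maximum principle, transfer through $\{\mu>0\}$ and $\{\nu>0\}$, $L$-periodicity ensuring these sets meet every period) is sound.

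However, the weak comparison step has a genuine gap in the sub-case where the negative infimum $m=\inf\min(\widetilde w,\widetilde z)<0$ is achieved by $\widetilde z$ at a road point $(t_*,x_*,0)$ with $\widetilde w(t_*,x_*)>m$. You yourself observe that the substitution $e^{-\lambda t}$ ``leaves the Robin coupling untouched'', yet claim the contradiction there comes ``after again absorbing $\lambda$''. There is no $\lambda$ in the Robin inequality, so there is nothing to absorb. Concretely: the Hopf lemma gives $\partial_y\widetilde z(t_*,x_*,0)>0$, while the Robin inequality yields $\nu(x_*)m-\mu(x_*)\widetilde w(t_*,x_*)\geq d\,\partial_y\widetilde z(t_*,x_*,0)>0$. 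Using $\widetilde w\geq m$ and $\mu\geq 0$ gives only $(\nu(x_*)-\mu(x_*))\,m>0$, i.e.\ $\nu(x_*)<\mu(x_*)$, which is perfectly possible (e.g.\ $\nu(x_*)=0<\mu(x_*)$) and not a contradiction. The argument closes when $\widetilde w(t_*,x_*)\geq 0$, but that is exactly the circularity the weak comparison is supposed to break.

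Two ways to repair this. In the constant-coefficient setting of~\cite{BRR_influence_of_a_line} one may rescale the road unknown, $\hat w:=\mu w/\nu$, which symmetrizes the Robin line to $-d\partial_y z+\nu z(\cdot,0)-\nu\hat w\geq 0$; then $\nu(m-\hat w(t_*,x_*))>0$ together with $\hat w\geq m$ is an immediate contradiction. Here $\nu(x)$ may vanish, so this rescaling is unavailable and the proof is \emph{not} verbatim that of~\cite{BRR_influence_of_a_line}. A robust fix is a $y$-dependent weight: set $\widehat z:=e^{-\lambda t}z/\phi(y)$ with $\phi(y)=\tfrac12\bigl(1+e^{-\gamma y}\bigr)$, so that $\phi$ is bounded between $\tfrac12$ and $1$ (no new unboundedness as $y\to\infty$), $\phi(0)=1$, $\phi'(0)=-\gamma/2$. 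The Robin inequality becomes $-d\partial_y\widehat z(\cdot,0)+(\nu+\tfrac{d\gamma}{2})\widehat z(\cdot,0)-\mu\widetilde w\geq 0$, and the same computation now forces $(\nu(x_*)+\tfrac{d\gamma}{2}-\mu(x_*))\,m>0$, which is impossible for $m<0$ once $\gamma>2\mu_1/d$. The modified bulk operator for $\widehat z$ picks up a bounded drift and a bounded extra zeroth-order term of size $O(\gamma^2)$, which is then absorbed by taking $\lambda$ large \emph{after} $\gamma$; the order of choices (first $\gamma$, then $\lambda$) is the point you were missing. Separately, the handling of the $y\to\infty$ escape (``comparison with a small positive constant'') is also too loose as written: a constant does not diverge as $y\to\infty$ and hence does not trap the infimum in a compact set; you need a barrier growing in $y$ (e.g.\ $\epsilon(1+y^2)$) or a $y$-translation/blow-up argument reducing to the whole plane $\R^2$, both of which are standard but should be said.
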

\begin{proof}
The proof is the same as in~\cite{BRR_influence_of_a_line} and we omit the details. The argument relies on a combination of the strong parabolic maximum principle and parabolic Hopf lemma, together with the key assumption that $\mu$ and $\nu$ are non-negative.\end{proof}

This comparison principle also extends to generalized sub and supersolutions. We again omit the proof and refer to Proposition~3.3 in~\cite{BRR_influence_of_a_line} for the details. We write their result below for the convenience of the reader and because we will use it several times throughout this paper:
\begin{prop}\label{prop:comparison2}
Let $E \subset (0,+\infty) \times  \R $ and $F \subset (0,+\infty) \times \Omega $ be two open sets. Let $(u_1,v_1)$ and $(u_2,v_2)$ be two subsolutions of \eqref{eq:model}, bounded from above and satisfying
$$u_1 \leq u_2 \ \mbox{ on } \partial E \cap ((0,+\infty) \times \R ),\quad v_1 \leq v_2 \ \mbox{ on } \partial F \cap ((0,+\infty) \times  \Omega ),$$
and define the functions
$$\underline{u} (t,x) := \left\{
\begin{array}{ll}
\max \{ u_1 ,u_2 \} & \mbox{ in } \overline{E},\\
u_2 & \mbox{ otherwise},
\end{array}
\right.$$
$$\underline{v} (t,x) := \left\{
\begin{array}{ll}
\max \{ v_1 , v_2 \} & \mbox{ in } \overline{F},\\
v_2  & \mbox{ otherwise}.
\end{array}
\right.$$
If it satisfies
\begin{eqnarray*}
&&  \underline{u}(t,x) > u_2 (t,x) \Rightarrow \underline{v} (t,x,0) \geq v_1 (t,x,0), \\
&& \underline{v} (t,x,0) > v_2 (t,x,0) \Rightarrow \underline{u} (t,x) \geq u_1 (t,x),
\end{eqnarray*}
then, any supersolution $(\overline{u}, \overline{v})$ of \eqref{eq:model} such that $\underline{u}\leq \overline{u} $ and $\underline{v} \leq \overline{v} $ at time $t=0$, also satisfies $\underline{u} \leq \overline{u}$ and $\underline{v} \leq \overline{v}$ for all time $t>0$.
\end{prop}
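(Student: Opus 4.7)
The plan is to adapt the standard parabolic comparison machinery of Proposition~\ref{prop:comparison1} to the piecewise-defined objects $(\underline{u}, \underline{v})$, extracting as much regularity as possible from the smooth pieces $u_i$, $v_i$, and falling back on the two road-coupling implications exactly where the lack of smoothness of $\underline{u}$, $\underline{v}$ across $\partial E$, $\partial F$ prevents a direct pointwise comparison. Fix an arbitrary $T > 0$; perturbing the supersolution into
\[
\overline{u}_\eps := \overline{u} + \eps e^{Kt}, \qquad \overline{v}_\eps := \overline{v} + \eps e^{Kt},
\]
with $K$ chosen larger than the Lipschitz constant of $f$ on the range of the solutions and than $\mu_1 + \nu_1$, preserves the supersolution property while enforcing strict ordering at $t = 0$. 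I would then run the usual first-contact scheme, assuming for contradiction a first contact time $t_0 \in (0, T]$.

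The interior analysis is essentially local. On $\overline{F}$ the function $\underline{v}$ coincides pointwise with $\max(v_1, v_2)$, so at any interior contact point there is a neighbourhood in which $\underline{v} = v_i$ classically for some $i$, whence $\partial_t \underline{v} - d\Delta \underline{v} \leq f(\underline{v})$ is available and the standard strong parabolic maximum principle rules out the contact, exactly as in Proposition~\ref{prop:comparison1}. Outside $\overline{F}$ one has $\underline{v} = v_2$ by definition, and the boundary hypothesis $v_1 \leq v_2$ on $\partial F$ guarantees that the same argument carries over across $\partial F$ using $v_2$ alone as a local subsolution. The parallel argument on the road, using $u_1 \leq u_2$ on $\partial E$, rules out any contact in the open road.

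The main obstacle, and the place where the two compatibility implications come into play, is a first contact sitting on the interface $y = 0$. Suppose $\overline{v}_\eps(t_0, x_0, 0) = \underline{v}(t_0, x_0, 0)$; Hopf's lemma applied to $\overline{v}_\eps - \underline{v}$ in the half-space yields $\partial_y \overline{v}_\eps > \partial_y \underline{v}$ at that point, and combining with the supersolution boundary inequality one gets
\[
\mu(x_0)\, \overline{u}_\eps - \nu(x_0)\, \overline{v}_\eps(t_0,x_0,0) \leq -d\,\partial_y \overline{v}_\eps(t_0,x_0,0) < -d\,\partial_y \underline{v}(t_0,x_0,0).
\]
To close the argument I need the matching inequality $-d \partial_y \underline{v}(t_0,x_0,0) \leq \mu \underline{u}(t_0,x_0) - \nu \underline{v}(t_0,x_0,0)$, and it is exactly here that the hypotheses on $(E,F)$ are used: if $\underline{v}(t_0,x_0,0) = v_2$, the subsolution inequality for $(u_2, v_2)$ gives it immediately using $u_2 \leq \underline{u}$ and $\nu \geq 0$; if instead $\underline{v}(t_0,x_0,0) = v_1 > v_2$, the second compatibility implication forces $\underline{u}(t_0,x_0) \geq u_1(t_0,x_0)$, and the subsolution inequality for $(u_1, v_1)$ combined with $u_1 \leq \underline{u}$ and $\nu \geq 0$ yields the same bound. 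Feeding this back produces a strict inequality at the contact point that contradicts $\overline{u}_\eps \geq \underline{u}$, at least where $\mu(x_0) > 0$; points where $\mu$ vanishes decouple $u$ locally from the boundary and are handled directly from the road equation.

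A symmetric argument, swapping the roles of $\underline{u}$ and $\underline{v}$ and invoking the first compatibility implication ($\underline{u} > u_2 \Rightarrow \underline{v}(\cdot,\cdot,0) \geq v_1$), rules out a first contact sitting purely on the road. All cases having been excluded, no first contact can occur in $(0,T]$, so $(\underline{u}, \underline{v}) \leq (\overline{u}_\eps, \overline{v}_\eps)$ throughout $[0,T]$; letting $\eps \to 0$ and then $T \to \infty$ concludes. I expect the delicate point to be precisely the interface step above: the one-sided differentiability of $\underline{v}$ across $\partial F$ and the verification that the compatibility conditions encode exactly the algebraic signs needed for the Hopf/boundary-inequality combination to close.
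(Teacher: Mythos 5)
The paper itself does not prove this proposition --- it explicitly defers to Proposition~3.3 of~\cite{BRR_influence_of_a_line} --- so there is no in-paper argument to compare against. Your overall strategy (first-contact scheme combined with the strong parabolic maximum principle and Hopf lemma, with the two compatibility implications supplying the missing road-field inequality for whichever subsolution realizes the max at a boundary contact) is the right one and is in the spirit of the cited proof.

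There is, however, a concrete gap at the perturbation step. The pair $\overline{u}_\eps=\overline{u}+\eps e^{Kt}$, $\overline{v}_\eps=\overline{v}+\eps e^{Kt}$ does \emph{not} in general remain a supersolution: the perturbation is constant in $y$, so the Robin coupling gives
$$-d\partial_y\overline{v}_\eps-\bigl(\mu\overline{u}_\eps-\nu\overline{v}_\eps\bigr)=\Bigl[-d\partial_y\overline{v}-(\mu\overline{u}-\nu\overline{v})\Bigr]-(\mu-\nu)\,\eps e^{Kt},$$
and the extra term has the wrong sign wherever $\mu(x)>\nu(x)$ --- a situation the paper does not exclude (it even allows $\nu$ to vanish where $\mu>0$). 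Taking $K$ large helps the two parabolic inequalities but cannot fix this static boundary constraint. The standard repair is a $y$-dependent lift such as $\overline{v}_\eps=\overline{v}+\eps\bigl(1+Ce^{-\omega y}\bigr)e^{Kt}$ with $dC\omega\geq\mu_1$, which supplies an extra $+dC\omega\,\eps e^{Kt}$ in $-d\partial_y\overline{v}_\eps$ to absorb $\mu\eps e^{Kt}$; one then retunes $K$ to dominate $\nu_1(1+C)$ and $dC\omega^2$ as well as the Lipschitz constant of $f$. Two secondary points: the claim that near any interior contact point $\underline v$ coincides with a single $v_i$ on a full neighbourhood is false where $v_1$ and $v_2$ cross, but harmless, since at the contact point $\overline{v}_\eps$ touches from above whichever $v_i$ attains the max and the strong maximum principle can be applied to $\overline{v}_\eps-v_i$ directly; and in the unbounded half-plane the infimum of $\overline{v}_\eps-\underline v$ at the putative first contact time need not be attained for a spatially uniform lift, so one should either use a perturbation that grows at spatial infinity or invoke the Phragm\'en--Lindel\"of form of the parabolic maximum principle for bounded sub/supersolutions rather than a literal first-contact argument.
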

Thanks to those comparison principles, we can prove as announced that the Cauchy problem is well-posed.
\begin{theo}\label{thm:cauchy}
The Cauchy problem \eqref{eq:model}-\eqref{eq:initial_data}, where the initial datum is bounded, non-negative and H\"older continuous, has a unique global non-negative and bounded solution.
\end{theo}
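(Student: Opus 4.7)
Uniqueness and non-negativity are immediate consequences of the comparison principle of Proposition~\ref{prop:comparison1}: if $(u_1, v_1)$ and $(u_2, v_2)$ are two non-negative bounded solutions sharing the same initial data, each pair is simultaneously a sub- and a supersolution of the other and they must coincide, while non-negativity follows by comparison with the trivial subsolution $(0, 0)$, which is admissible since $f(0) = 0$ and $\mu, \nu \geq 0$.

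For existence, I would first obtain a classical local-in-time solution by a Banach fixed-point argument in parabolic Hölder spaces. The natural map sends a pair $(\tilde u, \tilde v)$ to the solution of the decoupled linear system obtained by freezing $\tilde v(\cdot, \cdot, 0)$ in the road equation and $\tilde u$ in the Robin boundary condition of the field equation. The required Schauder estimates for the 1D heat equation on $\R$ and for the 2D heat equation on the half-plane with an oblique Robin boundary are standard, the Hölder regularity of the initial data ensures the correct compatibility at $t = 0$, and the local Lipschitz character of $f$ yields contraction for a small time step.

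To extend the local solution to all $t > 0$ without blow-up, I would build an explicit supersolution of the form
$$
(\overline u(t, x), \overline v(t, x, y)) = \bigl(A e^{\lambda t}, \, B(1 + e^{-\gamma y}) e^{\lambda t}\bigr),
$$
with positive constants $A, B, \gamma, \lambda$ to be chosen. Using the KPP bound $f(v) \leq f'(0) v$ from~\eqref{hyp:KPP} together with the non-negativity of $\mu$ and $\nu$, the three supersolution conditions reduce to the pointwise inequalities
$$
A\lambda \geq 2\nu_1 B, \qquad \lambda \geq f'(0) + d\gamma^2, \qquad d\gamma B \geq \mu_1 A,
$$
which are compatible as soon as $\lambda \geq 2\mu_1\nu_1/(d\gamma)$; the constants $A, B$ can then be scaled up to dominate the initial data. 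Proposition~\ref{prop:comparison1} applied to $(\overline u, \overline v)$ and to the local solution yields a uniform bound on any finite time interval and rules out blow-up, hence global existence. The main subtlety I expect here is that the Robin boundary condition combined with the possible vanishing of $\nu$ dooms a naive constant-in-$y$ ansatz $\overline v \equiv B e^{\lambda t}$: it would require the pointwise inequality $\nu(x) B \geq \mu(x) A$, which fails wherever $\nu$ vanishes. The decaying layer $e^{-\gamma y}$ remedies this by generating an outward flux $d\gamma B$ at the road that absorbs the exchange term $\mu(x) A$ regardless of the sign of $\nu(x)$, making the construction robust in the full periodic setting with possibly vanishing exchange coefficients.
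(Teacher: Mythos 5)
Your uniqueness and non-negativity arguments coincide with the paper's. For existence your route is genuinely different: you propose local existence by a Banach fixed-point argument in parabolic H\"older spaces followed by continuation, whereas the paper builds a globally defined monotone iterative scheme (in the spirit of Sattinger) that decouples road and field at each step, shows the iterates decrease from a fixed supersolution, and passes to the limit by compactness. Both are legitimate; the monotone scheme has the practical advantage that the iterates are global from the start and the compatibility issues for the Robin boundary at $t=0$ never enter, while your route is shorter to state. One minor imprecision on your side: the ``decoupled linear system'' you describe still contains the nonlinear reaction $f(v)$ in the field; for the fixed-point map to be affine and Schauder theory to apply you should also freeze $f(\tilde v)$ on the right-hand side, which is harmless by local Lipschitz continuity of $f$ but should be said.

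The substantive gap is in the boundedness claim. Your supersolution $\overline u = A e^{\lambda t}$, $\overline v = B(1+e^{-\gamma y})e^{\lambda t}$ does verify the three pointwise inequalities you write (and the boundary layer $e^{-\gamma y}$ correctly repairs the failure of a $y$-independent ansatz where $\nu$ vanishes, exactly as in the paper). But since you only exploit the linear bound $f(v)\le f'(0)v$, your barrier grows like $e^{\lambda t}$: this rules out finite-time blow-up and gives a global solution, yet the theorem asserts the solution is \emph{bounded}, i.e.\ uniformly in time. The paper's supersolution is stationary: it replaces your constant road profile by the positive principal eigenfunction $\tilde U_0$ of $-D\partial_x^2+\mu(x)$ on the torus, so that the surplus term comes from $\lambda_0\tilde U_0>0$ instead of $\partial_t\overline u$, and crucially uses the saturation hypothesis $f(v)/v\to-\infty$ in \eqref{hyp:KPP} to pick $K$ with $f(KC)/(KC)\le -d\omega^2$, making the field inequality hold with no time growth at all. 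To close your argument you would still need a second comparison against such a stationary supersolution after global existence is secured; as written, the uniform-in-time bound is not established.
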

\begin{proof}
Note that uniqueness immediately follows from the previous proposition, so it only remains to prove the existence part. The proof is again very similar to~\cite{BRR_influence_of_a_line}, by constructing in the spirit of \cite{Sattinger-Monotone} a monotonic sequence of solutions $(u^n,v^n)_{n \geq 1}$ to
\begin{equation}
\label{eq:iterate_u}
\left\{
\begin{array}{ll}
\partial_t u^n -D\partial_x^2 u^n + \mu (x) u^n = \nu(x)v^{n-1} (t,x,0), & t>0,x\in\R, \vspace{3pt}\\
u^n|_{t=0} = u_0, & x \in \R,
\end{array}
\right.
\end{equation}
and
\begin{equation}
\label{eq:iterate_v}
\left\{
\begin{array}{ll}
\partial_t v^n -d\Delta v^n=f(v^n),  & t>0,(x,y)\in\Omega, \vspace{3pt} \\
-d\partial_y v^n(t,x,0)=\mu(x) u^n- \nu (x)v^n (t,x,0), & t>0,x\in\R , \vspace{3pt}\\
v^n|_{t=0} = v_0 , & x \in \R,
\end{array}
\right. 
\end{equation}
where $v^0 \geq 0$ will be suitably initialized when $n=1$ in \eqref{eq:iterate_u} later on.

By classical parabolic theory \cite{LadyzhenskayaUraltseva} both problems shall be solvable at each step, and the minimum principle ensures that $u^n,v^n$ stay non-negative. In order to pass to the limit and obtain a solution of \eqref{eq:model}, we need some $L^\infty$ estimates on $(u^n,v^n)$ uniformly with respect to~$n$. This is done by exhibiting a bounded supersolution (which will also serve in the initialization of the sequence) and applying the comparison principle.

First let $\tilde{U}_0(x)$ be the (positive) principal eigenfunction of the periodic problem
\begin{equation*}
\left\{
\begin{array}{ll}
 -D \tilde{U}_0 ''+ \mu(x) \tilde{U}_0 = \lambda_0 \tilde{U}_0 , & x\in\R, \vspace{3pt}\\
\tilde{U}_0 \mbox{ is $L$-periodic}.
\end{array}
\right.
\end{equation*}
Note that from our assumptions on $\mu$, it is clear that this principal eigenvalue $\lambda_0 >0$.

Now let
\begin{equation}
\label{supersol442}
(\overline{U}(x),\overline{V}(x,y)) :=  K \left(\tilde{U}_0(x),C(1+e^{-\omega y})  \right)
\end{equation}
for some parameters $K,C,\omega>0$ to be adjusted shortly. Fix first $C\leq \frac{\lambda_0\min\tilde{U}_0}{2\nu_1}$ and then $\omega\geq \frac{\mu_1\max\tilde{U}_0}{Cd}$. By the KPP hypothesis \eqref{hyp:KPP} we can next choose $K>0$ large enough such that $\frac{f(KC)}{KC}\leq -d\omega^2$, and some straightforward computations show that $(\overline{U},\overline{V})$ satisfies
$$
\left\{
\begin{array}{l}
-DU''+\mu(x)U\geq \nu(x) V(x,0) , \\
 -d\Delta V\geq f(V), \\
 -d\partial_y V(x,0)+\nu(x)V(x,0)\geq \mu(x) U.
\end{array}
\right.
$$
Note that this positive stationary supersolution is bounded from above and away from zero: up to increasing $K>0$ (for fixed $C,\omega$) we can therefore assume that the initial data $0\leq (u_0,v_0)\leq (\overline{U},\overline{V})$.

Initializing $v^0(t,x)=\overline{V}(x)$ in \eqref{eq:iterate_u}, a straightforward inductive application of the comparison principle,  Proposition \ref{prop:comparison1}, shows that
$$
\overline{U}\geq \ldots \geq u^{n}\geq u^{n+1}\geq \ldots\geq 0
\quad\mbox{and}\quad
\overline{V}\geq \ldots \geq v^{n}\geq v^{n+1}\geq \ldots\geq 0
$$
for all $n\geq 1$, and therefore
$$
\forall t\geq 0,\,x\in\R,\,y\geq 0:\qquad u^n(t,x,y)\searrow u(t,x,y)
\quad
\mbox{and}\quad 
v^n(t,x)\searrow v(t,x)
$$
as $n\to\infty$ pointwise.

In order to show that the pair $(u,v)$ satisfies \eqref{eq:model} together with the initial condition \eqref{eq:initial_data}, we claim now that the sequence $\{u^n,v^n\}_{n\geq 1}$ is relatively compact in the local uniform, the $C^{0,1}_{loc} ((0,\infty)\times \overline{\Omega})$, and the $C^{1,2}_{loc} ((0,\infty) \times \Omega)$ topologies. More precisely, for any $R>0$ and $T>0$, the uniform boundedness of $v^n$ implies uniform $W^{1,2}_{p} ((0,T) \times B_R)$ estimates on the sequence $u^n$ for any large $p$, and in particular it is locally uniformly H\"{o}lder continuous by Morrey's inequality. Then by parabolic estimates~\cite{LadyzhenskayaUraltseva} on the field equation the sequence $v^n$ is locally uniformly H\"{o}lder continuous, as well as its spatial derivatives of order 1 for all $t>0$ and up to the road boundary $\{y=0\}$. Furthermore, by standard interior Schauder estimates, its time derivative and space derivatives up to the order~2 are locally uniformly H\"{o}lder continuous inside the field. This is enough to obtained the desired compactness.

Clearly any cluster point must agree with the previous pointwise limit $(u^n,v^n)\searrow (u,v)$, so by standard uniqueness and separation arguments we conclude that the whole sequence converges in the strong topologies. This is enough to pass to the limit in \eqref{eq:iterate_u}-\eqref{eq:iterate_v} and retrieve \eqref{eq:model} for all positive times. Moreover, $(u,v)=\lim (u^n,v^n)$ is continuous up to time $t=0$, and the proof is achieved.
\end{proof}

\subsection{The Liouville type result}\label{sec:liouville42}

We look now for stationary solutions, i.e. solutions $(U,V)$ of
\begin{equation}
\left\{
\begin{array}{ll}
-D\Delta U=\nu(x)V(x,0)-\mu(x) U , & x\in\R,\vspace{3pt}\\
-d\Delta V=f(V), & (x,y)\in\Omega ,\vspace{3pt}\\
-d\partial_y V(x,0)=\mu(x) U(x)-\nu(x)V(x,0) , & x\in\R,
\end{array}
\right. 
\label{eq:model_stationary}
\end{equation}
and prove Theorem~\ref{main:liouville}.
Observe that, in the statement of Theorem~\ref{main:liouville}, the periodicity of $(U,V)$ immediately follows from uniqueness since any (lattice) translation of $(U,V)$ is again a solution. Moreover, the limit $V(x,+\infty) \equiv 1$ is a straightforward consequence of the positivity of the infimum: indeed, up to extraction of a subsequence, $V$ converges as $y \to +\infty$ to a positive and bounded stationary solution of $-d \Delta V = f(V)$, which may only be 1 by the KPP assumption.\\

Let us now begin the proof of Theorem~\ref{main:liouville} with the existence part:
\begin{lem}
There exists at least one non-trivial, non-negative and bounded solution of \eqref{eq:model_stationary}.
\label{lem:liouville_exist}
\end{lem}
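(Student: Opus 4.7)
My plan is to sandwich a stationary solution between a super- and a subsolution, and then extract one by parabolic monotone iteration. A supersolution is already in hand from the proof of Theorem~\ref{thm:cauchy}: the pair $(\overline U,\overline V)$ defined in \eqref{supersol442} is $L$-periodic in $x$, smooth, bounded, and bounded away from zero, and since it was built as a stationary supersolution it does the job. The entire task therefore reduces to producing a non-trivial stationary subsolution $(\underline U,\underline V)$ lying pointwise below $(\overline U,\overline V)$.

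To construct $(\underline U,\underline V)$ I would exploit the linear instability of $0$. Truncating the field to the strip $\Omega_R := \R \times (0,R)$ with the artificial Dirichlet condition $V=0$ at $y=R$, and linearizing the stationary system at zero, I look for an $L$-periodic in $x$ positive eigenpair solving
\[
\begin{cases}
-D \Phi_R'' + \mu\Phi_R - \nu\Psi_R(x,0) = \lambda_R \Phi_R, & x\in\R,\\
-d\Delta\Psi_R - f'(0)\Psi_R = -\lambda_R \Psi_R, & (x,y)\in\Omega_R,\\
-d\partial_y \Psi_R(x,0) = \mu\Phi_R - \nu\Psi_R(x,0), & x\in\R,\\
\Psi_R(x,R) = 0, & x\in\R.
\end{cases}
\]
Quotienting by the period in $x$ turns this into a compact spectral problem, to which a Krein--Rutman style argument applies: the positivity framework is furnished by $\mu,\nu\geq 0$, the sign of the Robin coupling and the strong maximum principle / Hopf lemma. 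It yields a simple principal eigenvalue $\lambda_R$ with strictly positive eigenpair $(\Phi_R,\Psi_R)$. The crucial step is to verify $\lambda_R>0$ for $R$ large enough; this can be seen by testing against a function of the form $(0,\psi)$ with $\psi$ supported in the interior of the field, where the contribution $f'(0)\int\psi^2 - d\int|\nabla\psi|^2$ becomes positive once $R$ is large, forcing the principal eigenvalue above any such level.

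The KPP condition $f(v)/v\to f'(0)$ as $v\to 0^+$ then lets me rescale $\varepsilon(\Phi_R,\Psi_R)$ to a classical stationary subsolution on $\Omega_R$ for $\varepsilon$ small: the road boundary condition is preserved by linearity, the first equation is a subsolution as soon as $\lambda_R\geq 0$, and the field inequality $-d\Delta V\leq f(V)$ reduces to $f'(0)-\lambda_R\leq f(\varepsilon\Psi_R)/(\varepsilon\Psi_R)$, which holds for small enough $\varepsilon$. Extending $\Psi_R$ by zero above $y=R$, the Hopf lemma guarantees a strictly negative outward derivative at $y=R$, so the kink points downward and the zero-extension is a \emph{generalized} stationary subsolution in the sense of Proposition~\ref{prop:comparison2}. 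After possibly reducing $\varepsilon$ once more, $(\underline U,\underline V)$ sits below $(\overline U,\overline V)$ pointwise.

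To conclude, I run the Cauchy problem of Theorem~\ref{thm:cauchy} from initial data $(\underline U,\underline V)$. Since this initial datum is a generalized stationary subsolution dominated by $(\overline U,\overline V)$, Propositions~\ref{prop:comparison1}--\ref{prop:comparison2} (applied to the time shifts of the solution, as in the standard monotone iteration) force $t\mapsto(u(t),v(t))$ to be non-decreasing and to remain below $(\overline U,\overline V)$. Monotone convergence combined with the parabolic regularity already established in Theorem~\ref{thm:cauchy} then produces a classical bounded stationary limit $(U,V)$ with $(U,V)\geq(\underline U,\underline V)\not\equiv 0$, which is the non-trivial solution sought. I expect the main obstacle to be the truncated eigenvalue step: fitting the non-standard 1D--2D coupled operator into a Krein--Rutman framework and quantifying the instability $\lambda_R>0$ for large $R$. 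Once this is secured, the KPP rescaling, the zero-extension across $y=R$ and the parabolic monotone iteration are all routine.
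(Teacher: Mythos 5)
Your proposal and the paper share the same skeleton: use the supersolution \eqref{supersol442}, build a non-trivial stationary (generalized) subsolution below it, then run the Cauchy problem of Theorem~\ref{thm:cauchy} from the subsolution and extract a stationary limit by monotonicity and parabolic regularity. The divergence is entirely in the subsolution. The paper takes $\underline{U}\equiv 0$ and $\underline V$ the zero-extension of the first Dirichlet eigenfunction of $-d\Delta$ on a ball $B_R(0,R+1)$ entirely contained in the open field; because the bubble never meets $y=0$, the road equation and the Robin coupling are satisfied with equality, and the only eigenvalue involved is the elementary Dirichlet eigenvalue of the Laplacian on a ball, which tends to $0$ as $R\to\infty$. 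You instead propose to solve the full coupled $1$D--$2$D eigenvalue problem on $\mathbb{T}\times(0,R)$ by Krein--Rutman; this is essentially the $\alpha=0$ case of the machinery the paper develops afterwards in Section~\ref{sec:linear_pb}, and is considerably heavier for this lemma.

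Two concrete issues with your version. First, the eigenvalue problem you wrote down is not internally consistent: the road line carries $\lambda_R\Phi_R$ on the right while the field line carries $-\lambda_R\Psi_R$, so they cannot both define the same eigenvalue, and with this mixed sign the operator is not cooperative. Your subsequent reasoning (wanting $\lambda_R>0$, and claiming ``the first equation is a subsolution as soon as $\lambda_R\geq 0$'') only matches the convention in which \emph{both} right-hand sides carry $-\lambda_R$; as actually written, the road line with $\lambda_R>0$ would give a \emph{super}solution, not a subsolution, of the road equation. Second, verifying $\lambda_R>0$ for large $R$ by ``testing against $(0,\psi)$'' and computing $f'(0)\int\psi^2-d\int|\nabla\psi|^2$ is a Rayleigh-quotient heuristic, but the coupled operator is not self-adjoint and your test pair has a vanishing first component, so no direct variational characterization is available; the paper establishes the analogous bound later (Proposition~\ref{prop:monotonicity_wrt_R}) by averaging the eigenfunction in $x$ and an elementary ODE argument on the average. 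Both gaps are fixable, but the paper's field-interior bubble sidesteps the coupled eigenvalue analysis entirely and keeps the lemma self-contained.
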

%\textcolor{red}{Note that this solution needs not be periodic at this stage. In fact it will be a posteriori because of the above remark about uniqueness and shift invariance}
\begin{proof}
We construct below some non-negative subsolution $(\underline{U},\underline{V})$ of \eqref{eq:model} which is bounded and non-trivial. We use the classical method and first let $R>0$ large enough such that the principal eigenvalue $\lambda(R)$ of $-d\Delta$ in the ball $B_R(0,0)\subset\R^2$ with Dirichlet boundary condition is less than $f'(0)/2$. If $\Phi_R(x,y)\geq 0$ is the corresponding principal eigenfunction, set
$$
\tilde{U}(x):=0,\qquad \tilde{V}(x,y):=\left\{
\begin{array}{ll}
\Phi_R (x,y-R-1) & \text{if }(x,y)\in B_R(0,R+1),\\
0 & \text{elsewhere}.
\end{array}
\right. 
$$
Choosing then a constant $C>0$ small enough and thanks to the regularity of~$f$, $(\underline{U},\underline{V})=C(\tilde{U},\tilde{V})$ will automatically be a subsolution of \eqref{eq:model_stationary}.

By Theorem~\ref{thm:cauchy}, there exists a solution $(u,v)$ of \eqref{eq:model} with initial datum $(\underline{U},\underline{V})$. Furthermore, as \eqref{eq:model} satisfies a comparison principle (see Propositions~\ref{prop:comparison1} and~\ref{prop:comparison2}), it lies above $(\underline{U},\underline{V})$ for all time and, furthermore, it is non-decreasing with respect to time. On the other hand, up to decreasing $C$, we can also assume that $(\underline{U},\underline{V}) \leq (\overline{U},\overline{V})$ where $(\overline{U},\overline{V})$ is the supersolution~\eqref{supersol442} constructed in the previous section. In particular, $(u,v)$ is bounded from above, hence it converges as $t \to +\infty$. Recalling that this supersolution is uniformly bounded, one can use standard parabolic estimates (see also the end of the proof of Theorem~\ref{thm:cauchy}) to get that $(u,v)$ converges locally uniformly to a stationary solution $(U,V)$ of \eqref{eq:model_stationary}. It is clear by construction that $(
U,V)$ is 
non-trivial, non-negative and 
bounded.
\end{proof}

\begin{lem}
Let $(U,V)$ be a non-negative, non-trivial, and bounded solution of \eqref{eq:model_stationary}: then $\inf_{\R}U>0$ and $\inf_{\overline{\Omega}}V>0$.
\label{lem:positivity} 
\end{lem}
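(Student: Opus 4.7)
The plan is to first prove strict positivity pointwise, then upgrade it to a uniform lower bound using a compactness argument based on the periodicity of the coefficients and a periodic array of subsolutions.

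First I would establish that $U > 0$ on $\R$ and $V > 0$ on $\overline{\Omega}$. Rewriting the KPP equation as $-d\Delta V + c(x,y) V = 0$ with $c := -f(V)/V \in L^\infty$ (legitimate since $V$ is bounded and $f \in C^{1,r}$ with $f(0)=0$), the strong maximum principle yields the dichotomy $V \equiv 0$ or $V > 0$ in $\Omega$. The first alternative is impossible: the Robin boundary condition would force $\mu(x) U(x) \equiv 0$, hence $U \equiv 0$ on the non-empty open set $\{\mu > 0\}$, and the one-dimensional strong maximum principle applied to $-D U'' + \mu U = 0$ would then yield $U \equiv 0$ on $\R$, contradicting the non-triviality of $(U,V)$. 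So $V > 0$ in $\Omega$. To extend this to the road, assume $V(x_0, 0) = 0$ at some $x_0$; Hopf's lemma gives $\partial_y V(x_0, 0) > 0$, while the boundary condition reads $-d \partial_y V(x_0, 0) = \mu(x_0) U(x_0) \geq 0$, a contradiction. Thus $V > 0$ on $\overline{\Omega}$. Finally, $U > 0$ on $\R$ follows from the strong maximum principle applied to $-DU'' + \mu U = \nu V(\cdot, 0)$, whose right-hand side is non-negative and not identically zero.

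Next I would obtain the uniform lower bounds by contradiction. Suppose $\inf_{\overline{\Omega}} V = 0$ (the bound $\inf_\R U > 0$ will follow at the end from the road equation). By pointwise positivity and continuity there is a sequence $(x_n, y_n) \in \overline\Omega$ with $V(x_n, y_n) \to 0$ and $|(x_n, y_n)| \to \infty$. Exploit the $L$-periodicity of $\mu, \nu$: up to a subsequence with $x_n \bmod L \to x^*$, translate by $x_n$ and obtain (by standard elliptic and bootstrap estimates) a $C^2_{loc}$ cluster point $(U_\infty, V_\infty)$. In the sub-case where $y_n$ stays bounded, $(U_\infty, V_\infty)$ is a bounded non-negative solution of the stationary system with shifted coefficients $\mu(\cdot + x^*), \nu(\cdot + x^*)$ on $\overline\Omega$ satisfying $V_\infty(0, y_*) = 0$ for $y_* = \lim y_n$; running the arguments of the previous paragraph in reverse forces $(U_\infty, V_\infty) \equiv (0,0)$. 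In the sub-case $y_n \to \infty$, translating also in $y$ pushes the road away to $-\infty$ and the limit $W_\infty$ is a bounded non-negative entire solution of $-d\Delta W = f(W)$ on $\R^2$ with $W_\infty(0,0) = 0$; the classical KPP Liouville theorem on $\R^N$ (any bounded non-negative solution is $\equiv 0$ or $\equiv 1$) then yields $W_\infty \equiv 0$.

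To close, I would construct a strictly positive $L$-periodic (in $x$) lower bound for $V$ which contradicts either vanishing scenario. Starting from the compactly-supported stationary subsolution $(0, C\Phi_R)$ of Lemma~\ref{lem:liouville_exist}, consider its family of $L$-translates in $x$; by Proposition~\ref{prop:comparison2}, the pointwise maximum $(\widehat U, \widehat V)$ is a generalized subsolution, and it is $L$-periodic in $x$ by construction. For $R > L/2$ the translated ball supports overlap in $x$ so that $\widehat V$ is strictly positive on an entire strip $\R \times [y_0, y_1] \subset \Omega$, hence bounded below on it by a positive constant. Choosing $C$ small enough that $(\widehat U, \widehat V) \leq (U, V)$ pointwise (using the strict positivity from the first paragraph), the Cauchy problem initialized at $(\widehat U, \widehat V)$ generates a nondecreasing-in-time solution pinned below $(U, V)$ and above $(\widehat U, \widehat V)$, whose stationary limit provides the pointwise lower bound $V \geq c_0 > 0$ on the strip. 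An upward comparison with the 1D profile of $-d v'' = f(v)$ monotonically lifting $c_0$ to $1$ then extends this to $\{y \geq y_0\}$, ruling out the sub-case $y_n \to \infty$ of the previous paragraph. The remaining sub-case with $y_n$ bounded is excluded as well: the cluster point $V_\infty$ would simultaneously be identically zero and inherit $V_\infty(\cdot, y_0) \geq c_0$ from the strip bound, a contradiction. This yields $\inf_{\overline\Omega} V > 0$, and then $\inf_\R U > 0$ follows from $-DU'' + \mu U = \nu V(\cdot, 0) \geq c_1 \nu$ with $c_1 := \inf V(\cdot, 0) > 0$, by comparison with the positive $L$-periodic solution of the corresponding inhomogeneous linear ODE.

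The main obstacle is justifying $(\widehat U, \widehat V) \leq (U, V)$ uniformly across the entire periodic array of translates: strict positivity of $(U,V)$ only gives a positive lower bound on any single compact set, not automatically a bound that works simultaneously at all translates $B_R(kL, R+1)$. Tuning the amplitude $C$ carefully --- for instance by first establishing an interior bound of $V$ on a single fundamental cell and then exploiting the periodic structure of the coefficients to transfer it uniformly to every translate --- is the technical heart of the proof.
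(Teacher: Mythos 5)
Your opening (pointwise positivity and the translate-and-compactness contradiction) matches the paper's strategy, but the step you yourself flag as the ``technical heart'' --- arranging a single constant $C>0$ so that the entire $L$-periodic array of bumps $C\Phi_R$ sits below $(U,V)$ at once --- is a genuine gap, and the fix you sketch does not work. The coefficients $\mu,\nu$ are $L$-periodic, but $(U,V)$ is \emph{not yet known to be periodic}: that conclusion is part of Theorem~\ref{main:liouville}, which uses this very lemma as an ingredient. A bound for $V$ on one fundamental cell therefore does not ``transfer'' to the other cells; a priori $\inf_{B_R(kL,\,R+1)}V$ could tend to $0$ as $|k|\to\infty$, in which case no positive $C$ makes the periodic barrier lie below $V$ at $t=0$. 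In short, the uniform-over-all-translates lower bound is exactly what the lemma is supposed to establish, and your construction presupposes it.

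The paper sidesteps this with a sliding argument rather than a global periodic barrier: fix one small $\varepsilon>0$ with $\varepsilon\Phi_R\leq V$ at a reference position of the ball $B_R\subset\Omega$, then translate the ball continuously inside $\Omega$. The strong maximum principle forbids a first interior tangency, since $\varepsilon\Phi_R$ vanishes on $\partial B_R$ while $V>0$ there, so the same $\varepsilon$ works at every position of the ball. This gives the uniform bound $\inf_{y\geq r}V>0$ directly (equation~\eqref{eq:inf_V}, after \cite[Proposition~4.1]{BRR_influence_of_a_line}) and replaces both your ``transfer'' step and your separate sub-case $y_n\to\infty$. Once \eqref{eq:inf_V} is available, only the case $y_n\to 0$ survives, and your Hopf-lemma argument at the boundary closes the contradiction exactly as the paper does. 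Your closing comparison for $U$ against the periodic solution of the inhomogeneous ODE is a reasonable alternative to the paper's contradiction argument once $\inf V(\cdot,0)>0$ is in hand.
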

\begin{proof}
We show first that $V\geq 0$ is bounded away from zero, and positivity of~$U$ will follow.

If $V\geq 0$, then the classical strong maximum principle, applied in the field, implies $V>0$. Using again the first Dirichlet eigenfunction of $-d\Delta$ in a large ball $B_{R}\subset \Omega$ and moving the ball it is easy to show as in \cite[Proposition 4.1]{BRR_influence_of_a_line} that
\begin{equation}
\forall r>0,\qquad \displaystyle{\inf_{y\geq r}}\; V>0.
\label{eq:inf_V}
\end{equation}
In other words, $V$ is uniformly positive far from the road. Arguing by contradiction, assume that $V(x_n,y_n)\searrow 0$ for some sequence $(x_n,y_n)_{n\geq 0}$ such that $y_n\searrow 0$. Shifting
$$
\begin{array}{ll}
U_n(x):=U(x+x_n), & V_n(x,y):=V(x+x_n,y+y_n), \vspace{3pt}\\
\mu_n(x):=\mu(x+x_n), & \nu_n(x):=\nu(x+x_n),
\end{array}
$$
elliptic estimates~\cite{GilbargTrudinger} and periodicity of $\mu$ and $\nu$ show, up to extraction of a subsequence, that $U_n$, $V_n$, $\mu_n$, and $\nu_n$ converge locally uniformly to some $\tilde{U},\tilde{V},\tilde{\mu},\tilde{\nu}$ satisfying the same system in the same domains, and
$$
\tilde{V}\geq 0\  \text{ in }\overline{\Omega},\qquad \tilde{V}(0,0)=0.
$$
In fact \eqref{eq:inf_V} implies that $\tilde{V}>0$ in $\Omega$. Since $-d\Delta \tilde{V}=f(\tilde{V})\geq 0$ in some neighborhood of $(0,0)$, Hopf lemma finally yields the following contradiction
$$
0>-d\partial_y\tilde{V}(0,0)=\tilde{\mu}(0)\tilde{U}(0)- \tilde{\nu}(0)\tilde{V}(0,0)\geq 0.
$$
Therefore, $\inf_{\overline{\Omega}} V >0$ as announced.

Assume now there exists $\left(x_n\right)_{n\geq 0}$ such that $U(x_n)\searrow 0$. As above, up to translating in $x$ and up to extraction of a subsequence, we may assume that
$$
U(x)\geq 0\text{ in }\R,\qquad U(0)=0.
$$
Here, abusing of notations, we still denote by $U$, $V$, $\mu$ and $\nu$ the shifted functions satisfying the same system. Since $V$ is positive,
we have
$$
-D\Delta U + \mu U = \nu V (\cdot,0) \geq 0.
$$
Hence, by the strong maximum principle, we get that $U \equiv 0$. But then, $\nu(\cdot)V (\cdot,0) \equiv 0$, and choosing any $x_0$ such that $\nu(x_0)>0$ contradicts the fact that $V$ has positive infimum. This completes the proof.
\end{proof}
As mentioned above, it follows from the positive infimum of $V$ and the KPP assumption that
$$
V(\cdot,+\infty) \equiv 1.
$$
As explained at the beginning of this section, it only remains to prove the uniqueness of positive and bounded stationary solutions.
\begin{lem}
There exists at most one non-negative and non-trivial bounded solution to \eqref{eq:model_lin}.
\end{lem}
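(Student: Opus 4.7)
The plan is to use a sliding/comparison argument exploiting the KPP concavity and the strict positive infima guaranteed by Lemma~\ref{lem:positivity}. Given two non-negative, non-trivial, bounded solutions $(U_1,V_1)$ and $(U_2,V_2)$ of \eqref{eq:model_stationary}, I set
$$\kappa^* := \inf\left\{\kappa > 0 \, :\ \kappa U_1 \geq U_2 \text{ on }\R \text{ and } \kappa V_1 \geq V_2 \text{ on }\overline{\Omega}\right\}.$$
Since $U_1$ and $V_1$ have positive infima while $U_2$ and $V_2$ are bounded above, $\kappa^*\in(0,+\infty)$. By symmetry of the roles of the two solutions it suffices to show $\kappa^*\leq 1$: combined with the analogous bound for the reversed pair, and using $(U_1,V_1)\geq 0$, this forces $(U_1,V_1) \equiv (U_2,V_2)$. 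The strategy is to assume by contradiction that $\kappa^*>1$ and derive a contradiction via the strong maximum principle and Hopf's lemma on the nonnegative differences $W:=\kappa^*U_1-U_2$ and $w:=\kappa^*V_1-V_2$.

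The next step is to examine the system satisfied by $(W,w)$. On the road, $-D W''+\mu W=\nu\,w(\cdot,0)\geq 0$, and the Robin condition reads $-d \partial_y w(x,0)=\mu(x) W(x)-\nu(x) w(x,0)$. In the field,
$$-d\Delta w \;=\; \kappa^* f(V_1)-f(V_2) \;=\; \bigl[\kappa^* f(V_1)-f(\kappa^* V_1)\bigr]+b(x,y)w,$$
where $b\in L^\infty(\Omega)$ comes from the mean-value theorem applied to $f\in C^{1,r}$. Since $\kappa^*>1$ and $V_1$ is bounded both away from $0$ and from $+\infty$, the strict monotonicity of $v\mapsto f(v)/v$ entails $\kappa^* f(V_1)-f(\kappa^* V_1)\geq \psi_0>0$ for some constant $\psi_0$. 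Hence $-d\Delta w-bw\geq \psi_0$ in $\Omega$. Moreover $w(x,+\infty)\equiv \kappa^*-1>0$ uniformly in $x$, so the very definition of $\kappa^*$ forces the existence of a sequence of horizontal shifts $x_n$ along which either $w$ or $W$ approaches $0$ at some point with $y$-coordinate bounded.

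I then pass to the limit: up to translation by $x_n$ and extraction, standard elliptic estimates together with the $L$-periodicity of $\mu$ and $\nu$ yield limiting functions $(\tilde W,\tilde w)$ satisfying the same system and inequalities, with either $\tilde w(0,y_\infty)=0$ for some $y_\infty\in[0,+\infty)$ or $\tilde W(0)=0$. If $y_\infty>0$, a pointwise argument gives immediate contradiction: $(0,y_\infty)$ is a global minimum of $\tilde w\geq 0$, hence $\Delta\tilde w(0,y_\infty)\geq 0$, incompatible with $-d\Delta\tilde w\geq\psi_0>0$ at that point. If $y_\infty=0$, the same pointwise obstruction forces $\tilde w>0$ throughout $\Omega$; Hopf's lemma applied to the elliptic operator $-d\Delta-b$ (whose zeroth-order coefficient is bounded) then yields $\partial_y\tilde w(0,0)>0$, whereas the Robin boundary condition together with $\tilde w(0,0)=0$ and $\tilde W,\tilde\mu\geq 0$ gives $-d\partial_y\tilde w(0,0)=\tilde\mu(0)\tilde W(0)\geq 0$, a contradiction.

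The remaining case $\tilde W(0)=0$ is reduced to the previous one. At a global minimum of $\tilde W\geq 0$ the road equation enforces $\tilde\nu(0)\tilde w(0,0)\leq 0$, hence $\tilde\nu(0)\tilde w(0,0)=0$. If $\tilde\nu(0)>0$ we are immediately back to $\tilde w(0,0)=0$; otherwise, the strong maximum principle applied to the 1D supersolution $-D\tilde W''+\tilde\mu\tilde W\geq 0$ on $\R$ yields $\tilde W\equiv 0$, so that $\tilde\nu(x)\tilde w(x,0)\equiv 0$, and since $\nu\not\equiv 0$ there exists $x_0$ at which $\tilde w(x_0,0)=0$, reducing to the previous analysis after one more shift. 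The main delicate point is precisely this intertwined reduction: because the exchange coefficients may vanish locally, one cannot argue on $w$ alone but must track how zeroes of $\tilde W$ and $\tilde w$ propagate to one another through the Robin coupling before Hopf's lemma can finally close the argument.
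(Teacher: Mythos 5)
Your argument is correct and is essentially the same sliding/comparison scheme as the paper's: scaling one solution over the other (your $\kappa^*$ is the reciprocal of the paper's $\theta^*$), exploiting the KPP strict concavity together with the uniform positivity from Lemma~\ref{lem:positivity} to produce a strict differential inequality for the difference, and concluding via translations, elliptic estimates, the strong maximum principle and Hopf's lemma, with the degenerate exchange coefficients handled exactly as in the paper by reducing a touching point of the road components to a touching point of the field components through $\nu\not\equiv 0$. The slightly different decomposition of the nonlinearity (extracting a uniform positive constant $\psi_0$ rather than a bounded zeroth-order coefficient) is a cosmetic variant of the same idea.
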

\begin{proof}
The idea is similar to \cite{BHH-periodic_fragmented} (we refer more specifically to the proof of their Theorem~2.4). The argument strongly relies on the uniform positivity from Lemma~\ref{lem:positivity}, which followed from the instability of $(0,0)$ as a steady state of \eqref{eq:model}. As this instability may be reinterpreted in terms of the generalized principal eigenvalue defined in Section~\ref{sec:linear_pb}, we also refer to the deeply related Liouville type results of~\cite{BHR-Liouville} for the general heterogeneous KPP equation.

Assume $(U_1,V_1)$ and $(U_2,V_2)$ are two such solutions, and let
$$
\theta^*:=\sup\{\theta>0 \ : \quad(U_1,V_1)>\theta(U_2,V_2)\}.
$$
Since both solutions are bounded away from zero (Lemma~\ref{lem:positivity}) and from above, $\theta^*$ is finite and positive. We show below that $\theta^*\geq 1$ and therefore $(U_1,V_1)\geq(U_2,V_2)$. By symmetry we would also retrieve the opposite inequality, hence $(U_1,V_1)\equiv (U_2,V_2)$.
\par
We proceed by contradiction and assume that $\theta^* < 1$. By continuity there holds
$$
P(x):=U_1(x)-\theta^*U_2(x)\geq 0,\qquad Q(x,y):=V_1(x,y)-\theta^*V_2(x,y)\geq 0,
$$
and by definition of $\theta^*$ there exists a sequence $(x_n,y_n)_{n\geq 0}$ such that $P(x_n)\searrow 0$ or $Q(x_n,y_n)\searrow 0$ (or both). 

We first consider the second case. Note that
$$Q (x,y) \to 1 - \theta^*>0
$$
as $ y\to +\infty$, thus we may assume without loss of generality that the sequence~$y_n$ is bounded and converges to some $y_\infty \geq 0$.

Assuming first that $y_\infty>0$, shifting in the $x$ direction as before, and extracting a subsequence we can assume that $Q(0,y_\infty) = 0$ and $Q \geq 0$ satisfies
 $$
 -d\Delta Q=f(V_1)-\theta^*f(V_2)>f(V_1)-f(\theta^*V_2).
 $$
Here we took advantage of the KPP hypothesis ($f(v)/v$ decreasing) and of the inequality $\theta^*<1$. Thus
 \begin{equation}
 -d\Delta Q+aQ>0
 \label{eq:LQ>0}
\end{equation}
in $\R \times (0,+\infty)$, where
$$
a(x,y)=-\frac{f(V_1)-f(\theta^*V_2)}{V_1-\theta^*V_2}
$$
is bounded uniformly in $x,y$ because $V_1,V_2$ are and $f$ is Lipschitz. Since $Q$ attains an interior minimum point $(0,y_{\infty})\in\Omega$, the strong maximum principle implies $Q\equiv 0$, which contradicts the strict inequality in \eqref{eq:LQ>0}.

Assuming now that $y_\infty  = 0$ and shifting again, we may take without loss of generality $Q(0,0) =0$ and, according to the previous argument, $Q>0$ in $\Omega$. Since \eqref{eq:LQ>0} still holds, Hopf lemma shows that $\partial_y Q(0 ,0)>0$, which is impossible since
$$
0>-d\partial_yQ(0 ,0)=\mu(0 )P(0 )-\nu(0 ) Q(0 ,0)=\mu(0 )P(0 )
$$
and $P\geq 0$ by definition of $\theta^*$.

Since we just proved that $Q>0$ up to the boundary we only have to prove that $P>0$, which will yield the desired contradiction. If not, then shifting again, extracting a subsequence, and with the same abuse of notations, we may assume that $P(0)=0$, where $P$ still satisfies the first equation
$$
-D\Delta P+\mu(x)P=\nu(x)Q(x,0),\qquad x\in \R.
$$
Since $Q\geq 0$ and $\mu(x),\nu(x)\geq 0$ the strong maximum principle implies $P(x)\equiv P(0 )=0$. The previous equality immediately implies $\nu(\cdot ) Q(\cdot ,0)\equiv 0$, thus $Q(x_0,0)= 0$ at any point $x_0$ such that $\nu(x_0)>0$. According to the previous argument this is a contradiction, and the proof is complete.
\end{proof}
%
%%%%%%%%%%%%%%%%%%%%%%%%%%%%%%%%%%%%%%%%%%%%%%%%%%%%%%%%%%%%%%%%%%%%%%%%%%%%%%%%%%%%%%%%%%%%%%%%%%%%%%%%%%%%%%%%%%%%%%%%
%
\section{A generalized principal eigenvalue}\label{sec:linear_pb}
In this section, we take interest in the linearized problem of~\eqref{eq:model}:
\begin{equation}
\left\{
\begin{array}{ll}
\partial_t u-D\Delta u=\nu(x)v(t,x,0)-\mu(x) u , & t>0,x\in\R,\vspace{3pt}\\
\partial_tv-d\Delta v=f'(0)v,  & t>0,(x,y)\in\Omega,\vspace{3pt}\\
-d\partial_y v(t,x,0)=\mu(x) u-\nu(x)v(t,x,0),  & t>0,x\in\R.
\end{array}
\right. 
\label{eq:model_lin}
\end{equation}
We will seek for exponential solutions of \eqref{eq:model_lin} moving with constant speed, i.e. of the form
\begin{equation}
\left(u,v\right)=e^{\alpha(x+ct)}\left(U(x),V(x,y)\right),\qquad c>0,\;\alpha>0
 \label{eq:exponential_supersolutions}
\end{equation}
for some $x$-periodic and positive $U,V$. By analogy with the single equation, we expect that there exists a critical speed $c^* (D)$ such that these exponential pulsating wave solutions exist if and only if $c\geq c^* (D)$. Furthermore, we expect this critical speed $c^* (D)$ to be also the spreading speed of solutions of~\eqref{eq:model} with compactly supported initial data.\\

Fixing a parameter $\alpha \geq 0$, we will denote for convenience
\begin{equation*}
\begin{array}{l}
L_{1} (U,V) := -DU'' - 2D\alpha U' + (-D\alpha^2 + \mu(x))U - \nu(x) V(x,0), \vspace{3pt}\\
L_{2} (U,V) := -d \Delta V - 2 d \alpha \partial_x V + (-d \alpha^2 - f'(0)) V , \vspace{3pt}\\
E(U,V):= -d \partial_y V(x,0) + \nu (x) V(x,0) - \mu (x) U(x).
\end{array}
\end{equation*}
Then, plugging the previous ansatz into \eqref{eq:model_lin} leads to the following eigenvalue problem
\begin{equation}\label{eq:principal_eigenvalue}
\left\{
\begin{array}{ll}
L_1 (U_\alpha,V_\alpha) = \Lambda U_\alpha  & \mbox{ in } \mathbb{R} ,\vspace{3pt}\\
L_2 (U_\alpha,V_\alpha) = \Lambda V_\alpha  & \mbox{ in } \mathbb{R} \times (0,+\infty),\vspace{3pt}\\
E(U_\alpha,V_\alpha)=0 & \mbox{ in } \mathbb{R},
\end{array}
\right.
\end{equation}
together with the conditions
$$\left\{
\begin{array}{ll}
U_\alpha \mbox{ is $L$-periodic and positive},\vspace{3pt}\\
V_\alpha \mbox{ is $L$-periodic and positive},\\\end{array}
\right.
$$
and
$$\Lambda  = -\alpha c.$$
In this section, we will deal with the well-posedness of this eigenvalue problem. The main difficulty comes from the unboundedness of the domain, which typically entails the non-uniqueness of the eigenvalue associated with positive eigenfunctions; thus the need of some well-chosen "generalized"  eigenvalues. We refer to~\cite{BR-general_eigen} and the references therein for many results on such eigenvalues and their applications (for single equations). 

Here, we will construct a generalized principal eigenvalue by first truncating the domain in the $y$-direction, then passing to the limit. This will lead us to the following theorem.

\begin{theo}\label{th:eigenvalue}
There exists a concave (hence continuous) function $\alpha \geq 0 \mapsto \Lambda (\alpha)$ such that :
\begin{enumerate}[$(i)$]
\item the system \eqref{eq:principal_eigenvalue} with $\Lambda = \Lambda (\alpha)$ admits a positive and $L$-periodic solution $(U_\alpha,V_\alpha)$ such that, up to some normalization,
$$U_\alpha (x) \leq 1 , \quad  V_\alpha (x,y) \leq C (1+y),$$
for some $C>0$ (possibly depending on $\alpha$);
\item the system \eqref{eq:principal_eigenvalue} admits no positive and $L$-periodic solution for any $\Lambda > \Lambda (\alpha)$.
\end{enumerate}
\end{theo}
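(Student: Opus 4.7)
The plan is to obtain $\Lambda(\alpha)$ as a monotone limit of classical principal eigenvalues on finite strips in the $y$-direction, following the general strategy for generalized principal eigenvalues on unbounded domains. For each $R > 0$ and $\alpha \geq 0$, I would consider the truncated version of \eqref{eq:principal_eigenvalue} on $\R \times [0,R]$ supplemented with the Dirichlet condition $V(x,R)=0$. By $L$-periodicity of the coefficients and the $L$-periodicity sought for the eigenfunctions, this reduces to a coupled cooperative elliptic eigenvalue problem on the compact cell $[0,L] \times [0,R]$ with periodic boundary conditions in $x$, Robin coupling at $\{y=0\}$, and Dirichlet at $\{y=R\}$. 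Since $\mu, \nu \geq 0$ render the off-diagonal coupling through $L_1$ and $E$ nonnegative, the Krein--Rutman theorem produces a simple real principal eigenvalue $\Lambda_R(\alpha)$ with an essentially unique positive eigenpair $(U_R^\alpha, V_R^\alpha)$.

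I would then establish three properties of the family $\Lambda_R(\alpha)$. First, concavity in $\alpha$ of each $\Lambda_R(\cdot)$ via the classical geometric-mean trick: taking eigenpairs $(U_i,V_i)$ at $\alpha_i$ ($i=1,2$), the pair $(U_1^\theta U_2^{1-\theta}, V_1^\theta V_2^{1-\theta})$ provides a positive test pair at $\alpha_\theta := \theta\alpha_1 + (1-\theta)\alpha_2$ for which the pointwise inequality $-\Delta\phi/\phi \geq \theta(-\Delta\phi_1/\phi_1) + (1-\theta)(-\Delta\phi_2/\phi_2)$, applied componentwise, combined with convexity of $\alpha \mapsto \alpha^2$, yields concavity of $\Lambda_R(\cdot)$. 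Second, $R \mapsto \Lambda_R(\alpha)$ is non-increasing, since enlarging the domain relaxes the Dirichlet constraint. Third, uniform boundedness of $\Lambda_R(\alpha)$ above and below in $R$ via explicit test functions (e.g.\ constants in $y$ or the $R=+\infty$ supersolution built in Section~\ref{sec:cauchy_stationary}). Together these give $\Lambda(\alpha) := \lim_{R\to\infty} \Lambda_R(\alpha)$ well defined, finite, and inheriting concavity in $\alpha$.

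The main technical step is the passage to the limit at the level of eigenfunctions, together with the bound $V_\alpha(x,y) \leq C(1+y)$. With the normalization $\max U_R^\alpha = 1$, I would first show that $\|V_R^\alpha(\cdot, 0)\|_\infty$ is controlled uniformly in $R$: integrating the road equation over one period and using the exchange condition $E(U_R^\alpha, V_R^\alpha)=0$ gives a first $L^1$-type control on $\nu V_R^\alpha(\cdot, 0)$, which upgrades to an $L^\infty$ bound by elliptic regularity on the road and a Harnack inequality to propagate positivity across the zeros of $\nu$. Once $V_R^\alpha$ is controlled at the road, the field equation $-d\Delta V_R^\alpha - 2d\alpha \partial_x V_R^\alpha = (d\alpha^2 + f'(0) + \Lambda_R(\alpha)) V_R^\alpha$ admits a linear supersolution of the form $a+by$ with $b$ large enough (depending only on the data and on the a priori bound for $\Lambda_R$), and the comparison principle yields $V_R^\alpha(x,y) \leq C(1+y)$ uniformly in $R$. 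Interior and boundary Schauder estimates then provide $C^2_{loc}$ compactness, so that a subsequence converges to a positive $L$-periodic pair $(U_\alpha, V_\alpha)$ solving \eqref{eq:principal_eigenvalue} at $\Lambda = \Lambda(\alpha)$ with the announced bounds; strict positivity is retrieved from the strong maximum principle on the road combined with Hopf's lemma at $\{y=0\}$.

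Finally, for the maximality statement $(ii)$ I would argue by contradiction: suppose a positive $L$-periodic pair $(\tilde U, \tilde V)$ solves \eqref{eq:principal_eigenvalue} for some $\Lambda > \Lambda(\alpha)$. Pick $R$ large enough so that $\Lambda_R(\alpha) < \Lambda$ and consider the restriction of $(\tilde U, \tilde V)$ to the strip $[0,R]$; since $\tilde V > 0$ on $\{y=R\}$ while $V_R^\alpha = 0$ there, this restriction is a strict positive supersolution of the truncated eigenvalue problem at the smaller eigenvalue $\Lambda_R(\alpha)$. A standard sliding argument $\theta^* := \sup\{\theta > 0 : \theta(U_R^\alpha, V_R^\alpha) \leq (\tilde U, \tilde V)\}$, followed by the strong maximum principle and Hopf's lemma at a contact point --- which cannot lie on $\{y=R\}$ thanks to the strict inequality there --- would force $\Lambda = \Lambda_R(\alpha)$, a contradiction. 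The most delicate step of the whole plan is undoubtedly the uniform $O(1+y)$ growth bound in step three, where the unboundedness of the field, the mixed 1D--2D coupling through the boundary, and the possible vanishing of $\mu, \nu$ genuinely interact.
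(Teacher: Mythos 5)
Your overall architecture --- truncated strip eigenproblems via Krein--Rutman, monotonicity and two-sided bounds in $R$, concavity by geometric interpolation, limit passage with a linear growth bound on $V$, and a sliding argument for maximality --- matches the paper's proof. The genuine gap is in the step you yourself flag as the most delicate: the bound $V_R^\alpha(x,y) \leq C(1+y)$ via a linear supersolution $a+by$. After normalizing, the field equation reads $-\Delta V - 2\alpha\partial_x V - \omega^2 V = 0$ with $\omega^2 = \bigl(d\alpha^2 + f'(0) + \Lambda_R(\alpha)\bigr)/d$. Proposition~\ref{prop:monotonicity_wrt_R} only gives $\omega^2 < \pi^2/R^2$, not $\omega^2 \leq 0$, and the case $\omega^2>0$ genuinely occurs: whenever $-\Lambda(\alpha) = d\alpha^2 + f'(0)$ (by Proposition~\ref{prop:eigen_enhance} this is exactly the situation $D\leq d$, or $D>d$ and $\alpha\leq\alpha(D)$), the strict monotone limit $\Lambda_R(\alpha) \searrow \Lambda(\alpha)$ forces $\omega^2>0$ for \emph{every} $R$. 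In that regime $a+by$ is a \emph{subsolution}, not a supersolution, of the field operator, so your comparison does not close. The paper's Lemma~\ref{loc bounded} handles this case with the exact barrier $\overline V_R(y) = \sin\bigl(\overline\omega(R-y)\bigr)/\sin(\overline\omega R)$ for a carefully chosen $\overline\omega\in(0,\pi/R)$ determined by the Robin condition, verified via a sliding argument, and only then derives the linear bound $1+(\nu_1/d)y$ from the concavity of the sine barrier.

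Two smaller points. Your Harnack-plus-$L^1$ control of $\|V_R^\alpha(\cdot,0)\|_\infty$ is plausible but needs care to be uniform near zeros of $\nu$ and up to the Robin boundary; the paper's Lemma~\ref{lem:lim3} avoids this by a compactness-contradiction argument that passes the road equation to the limit to get $\nu V_\infty(\cdot,0)\equiv 0$ and invokes Hopf's lemma at a point where $\nu>0$. You should also make explicit why the limiting $V_\alpha$ does not degenerate under the normalization $\max U_R^\alpha =1$: this is the content of Lemma~\ref{lem:lim1}, obtained by contradicting the strict lower bound $-\Lambda_R(\alpha) > D\alpha^2-\lambda_\alpha$; your a posteriori strong-maximum/Hopf argument can be made to work but, as written, presupposes the nontriviality it aims to establish.
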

The rest of this section is devoted to the proof of Theorem~\ref{th:eigenvalue}.
%%%%%%%%%%%%%%%%%%%%%%%%%%%%%%%%%%%%%%%%%%%%%%%%%%%%%%%%%%%%%%%%
\subsection{Principal eigenvalue on truncated fields}

Here and henceforth, $\alpha \geq 0$ is a fixed parameter. As announced, we first seek for a principal eigenvalue in the truncated fields
$$\Omega_R := \mathbb{T} \times (0,R),$$
where $\mathbb{T}$ denotes with some slight abuse of notations the one dimensional torus $\R_{/L \mathbb{Z}}$.

More precisely, we seek $\Lambda_R (\alpha) \in \R$ such that there exists 
$$(U_{\alpha,R},V_{\alpha,R})>0 \mbox{ periodic with respect to $x$}$$
solution of the following system:
\begin{equation}\label{eq:truncated_eigen}\left\{
\begin{array}{ll}
L_1 (U_{\alpha,R},V_{\alpha,R}) =\Lambda_R (\alpha) U_{\alpha,R}  & \mbox{ in } \mathbb{T} ,\vspace{3pt}\\
L_2 (U_{\alpha,R},V_{\alpha,R}) =\Lambda_R (\alpha) V_{\alpha,R}  & \mbox{ in } \Omega_R ,\vspace{3pt}\\
E(U_{\alpha,R},V_{\alpha,R})=0 = V_{\alpha,R}|_{y=R}  & \mbox{ in } \mathbb{T}.
\end{array}
\right.
\end{equation}
The compactness of the domain, together with the cooperative nature of the road-field system, now provides a much more convenient mathematical framework where the classical Krein-Rutman theory applies. However, because of the non-standard coupling through a boundary condition, and anticipating the need to pass to the limit as $R \to +\infty$, we include the details of the proof.
\begin{prop}\label{prop:truncated1}
There exists $M_\alpha$ such that, for any $M >M_\alpha$ and given any functions $ g_1 \in C^{0,r}(\mathbb{T})$ and $ g_2 \in C^{0,r}(\overline{\Omega_R})$, there exists a unique solution $U,V$ to
\begin{equation}\label{sys_data}\left\{
\begin{array}{ll}
L_1  (U,V)+MU = g_1  & \mbox{ in } \mathbb{T} ,\vspace{3pt}\\
L_2 (U,V) + MV =g_2  & \mbox{ in } \Omega_R ,\vspace{3pt}\\
E(U,V)=0 = V|_{y=R} & \mbox{ in } \mathbb{T},
\end{array}
\right.
\end{equation}
 with $U\in C^{2,r}(\mathbb{T})$ and $V\in C^{2,r}(\Omega_R)\cap C^{1,r}(\overline{\Omega_R})$. 

Moreover, \eqref{sys_data} enjoys a strong maximum principle: $g_1,g_2\geq 0$ implies $U,V\geq 0$, and if either $g_1\not\equiv 0$ or $g_2\not\equiv 0$ then $U>0$ in $\mathbb{T}$ and $V>0$ in $\mathbb{T}\times[0,R)$.
\label{prop:L_Muv=fg_solvable}
 \end{prop}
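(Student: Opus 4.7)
My plan is to recast \eqref{sys_data} variationally on the Hilbert space
\[
\mathcal{H} := H^1_{\mathrm{per}}(\mathbb{T})\times\bigl\{W\in H^1(\Omega_R)\, :\ W\text{ is $L$-periodic in }x,\ W|_{y=R}=0\bigr\},
\]
apply the Lax-Milgram theorem to obtain existence and uniqueness of a weak solution, bootstrap for the stated H\"older regularity, and then establish the maximum principle by a monotone iteration argument. Testing the $U$- and $V$-equations against $(\phi,\psi)\in\mathcal{H}$, integrating by parts on $\Omega_R$ (the vertical sides contribute nothing by periodicity in $x$, and $\psi|_{y=R}=0$ disposes of the top), and using $E(U,V)=0$ to substitute $-d\partial_y V(x,0)=\mu U-\nu V(x,0)$, I obtain a bilinear form $a((U,V),(\phi,\psi))$ whose coupling contribution along $\{y=0\}$ reads
\[
\int_{\mathbb{T}}\bigl[\nu V(x,0)\psi(x,0) - \mu U\psi(x,0) - \nu V(x,0)\phi\bigr]\,dx.
\]

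Testing with $(\phi,\psi)=(U,V)$, periodicity in $x$ kills the drift cross-terms $-2D\alpha\int_{\mathbb{T}} U'U\,dx=-D\alpha\int_{\mathbb{T}}(U^2)'\,dx=0$ and similarly $-2d\alpha\int_{\Omega_R} V\partial_xV\,dxdy=0$, while the boundary quadratic $\int_{\mathbb{T}}\nu V(\cdot,0)^2\,dx\geq 0$ is kept. The indefinite cross-terms $-\int_{\mathbb{T}}(\mu+\nu)UV(\cdot,0)\,dx$ are controlled via the trace inequality $\|V(\cdot,0)\|_{L^2(\mathbb{T})}\leq C_R\|V\|_{H^1(\Omega_R)}$ and Young's inequality, then absorbed into the diffusion norms together with the positive zero-order terms $(M-D\alpha^2+\mu_0)\|U\|_{L^2(\mathbb{T})}^2$ and $(M-d\alpha^2-f'(0))\|V\|_{L^2(\Omega_R)}^2$, provided $M$ exceeds some threshold $M_\alpha$ depending only on $D,d,\alpha,\mu_1,\nu_1$. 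Continuity of $a$ is immediate, so Lax-Milgram produces a unique weak solution $(U,V)\in\mathcal{H}$ for every datum $(g_1,g_2)$. The claimed H\"older regularity follows by a standard bootstrap: the trace $V(\cdot,0)\in H^{1/2}(\mathbb{T})$ makes the $U$-equation a scalar periodic ODE whose solution, after Sobolev embedding, is in $C^{1,r}(\mathbb{T})$; global Schauder estimates for the $V$-equation---Robin at $y=0$ with now H\"older data, Dirichlet at $y=R$, periodic in $x$---then give $V\in C^{1,r}(\overline{\Omega_R})\cap C^{2,r}(\Omega_R)$, and a final iteration upgrades $U$ to $C^{2,r}(\mathbb{T})$.

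For the maximum principle I will run a monotone iteration starting from $(U^{(0)},V^{(0)})\equiv(0,0)$. Given $V^{(n)}\geq 0$, $U^{(n+1)}$ is defined as the unique solution of the decoupled periodic ODE
\[
-D(U^{(n+1)})''-2D\alpha(U^{(n+1)})'+(M-D\alpha^2+\mu)U^{(n+1)}=g_1+\nu V^{(n)}(\cdot,0),
\]
and given $U^{(n+1)}\geq 0$, $V^{(n+1)}$ is the unique solution of the Robin--Dirichlet elliptic problem $L_2 V^{(n+1)}+MV^{(n+1)}=g_2$ in $\Omega_R$, with $-d\partial_y V^{(n+1)}(\cdot,0)+\nu V^{(n+1)}(\cdot,0)=\mu U^{(n+1)}$ at $y=0$, $V^{(n+1)}|_{y=R}=0$. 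For $M>M_\alpha$ each subproblem has a strictly positive zero-order coefficient and its Robin boundary condition has the ``good sign'' $\nu\geq 0$, so the classical maximum principle---with Hopf's lemma at $\{y=0\}$ to rule out a negative boundary minimum---inductively yields $U^{(n+1)},V^{(n+1)}\geq 0$. The sequence is monotone non-decreasing and bounded above by the unique weak solution built above, hence converges to $(U,V)\geq 0$. For the strict positivity statement when $(g_1,g_2)\not\equiv(0,0)$, the classical strong maximum principle applied separately to each decoupled equation---with the assumptions $\mu\not\equiv 0$ and $\nu\not\equiv 0$ used to propagate positivity between the two components through the coupling---gives $U>0$ on $\mathbb{T}$ and $V>0$ on $\mathbb{T}\times[0,R)$.

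The main technical point I expect to require care is the coercivity estimate in the presence of the non-self-adjoint boundary coupling at $\{y=0\}$; choosing the functional setting so that the cross-trace contributions can be absorbed into the diffusion norms and into the large parameter $M$ is exactly what dictates the threshold $M_\alpha$. Everything else reduces to standard linear elliptic theory applied to the two natural decoupled subproblems.
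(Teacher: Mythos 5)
Your variational route is genuinely different from the paper's, which constructs explicit barrier pairs of the form $K\left(1,C(1+e^{-\omega y})\right)$, derives the maximum principle and uniqueness from a sliding/scaling argument with a critical factor $\theta^*$, and then obtains existence by monotone iteration on the associated \emph{parabolic} evolution problem starting from the negative barrier. Your Lax--Milgram setup for existence and uniqueness of the weak solution is correct: the drift cross-terms vanish by periodicity when testing against $(U,V)$, the indefinite boundary coupling $-\int_{\mathbb{T}}(\mu+\nu)U\,V(\cdot,0)\,dx$ is absorbed by a trace inequality plus Young into the diffusion norms and into the large zero-order coefficient, and the regularity bootstrap is standard on the smooth (corner-free) periodic cylinder. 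The propagation of strict positivity between components via the strong maximum principle and Hopf's lemma at $y=0$ is also correct once nonnegativity is in hand.

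However, your proof of nonnegativity has a circular step. You run the monotone iteration from $(U^{(0)},V^{(0)})=(0,0)$, correctly establish that each iterate is nonnegative and that the sequence is non-decreasing, and then assert it is ``bounded above by the unique weak solution built above''. But to show by induction that $(U^{(n)},V^{(n)})\leq(U,V)$ you need the base case $(0,0)\leq(U,V)$, i.e.\ the nonnegativity of the Lax--Milgram solution, which is exactly the statement you are trying to prove; without an independent upper bound the non-decreasing sequence could a priori escape to infinity. The gap can be closed in several ways: (a) exhibit an explicit positive supersolution of the decoupled iteration that dominates every iterate independently of $(U,V)$, as the paper's Step~1 barrier does; (b) show that for $M>M_\alpha$ large the composed iteration map $V^{(n)}(\cdot,0)\mapsto U^{(n+1)}\mapsto V^{(n+1)}(\cdot,0)$ is a contraction (the resolvent of the periodic ODE has norm $O(1/M)$ and the Robin-to-trace map for the field problem has norm $O(1/\sqrt{M})$), so the iterates converge unconditionally and the limit must equal $(U,V)$ by uniqueness; or (c) avoid iteration entirely and test the weak formulation against the negative parts $(-U^-,-V^-)$, using coercivity and the sign of the boundary cross-terms to force $U^-=V^-=0$. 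As written, the maximum principle is not established.
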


% \begin{prop}\label{prop:truncated1}
% There exists $M_\alpha$ such that, for any $M >M_\alpha$ and given any functions $f\in C^{0,r}(\mathbb{T})$ and $g\in C^{0,r}(\overline{\Omega_R})$, there exists a solution $U,V$ to
% \begin{equation}\label{sys_data}\left\{
% \begin{array}{ll}
% (L_1 +M) (U,V) = f,  & \mbox{ in } \mathbb{T} ,\vspace{3pt}\\
% (L_2 +M) (U,V) =g,  & \mbox{ in } \Omega_R ,\vspace{3pt}\\
% E(U,V)=0 = V|_{y=R} , & \mbox{ in } \mathbb{T},
% \end{array}
% \right.
% \end{equation}
%  with $U\in C^{2,r}(\mathbb{T})$ and $V\in C^{2,r}(\Omega_R)\cap C^{1,r}(\overline{\Omega}_R)$.
% \label{prop:L_Muv=fg_solvable}
%  \end{prop}
\begin{proof}
Our proof is divided into three steps. The first step consists in constructing families of sub and supersolutions, which will serve in proving both the maximum principle (step 2), and the existence of a solution (step 3).

 {\it Step 1: construction of barriers.} We first seek positive supersolutions of the form
 $$
 \left(\overline{U}(x),\overline{V}(x,y)\right) =K \left(1,C(1+e^{-\omega y})\right)
 $$
 for some large $K>0$. Choosing for example $\omega=1$, $C=\mu_1/d$, an explicit computation shows that if
 \begin{equation}
 M>M_{\alpha}:=\max\left\{d(\alpha^2+1)+f'(0),D\alpha^2+2\frac{\mu_1\nu_1}{d}\right\}
 \label{eq:def_Malpha}
 \end{equation}
 then we can choose $K>0$ large enough (depending only on $\max g_1$ and $\max g_2$ if positive) such that the pair $(\overline{U},\overline{V})>0$ is indeed a supersolution of~\eqref{sys_data}. By linearity and up to increasing $K$ (depending on $\min g_1$ and $\min g_2$ if negative), $\left(\underline{U},\underline{V}\right)=-\left(\overline{U},\overline{V}\right)<0$ is also a negative subsolution. Moreover, it is easy to check that these are \emph{strict} sub and supersolutions, i.e. they are not solutions.
 
 {\it Step 2: strong maximum principle and uniqueness.} For fixed data $g_1,g_2\geq 0$, let $(U,V)$ be any solution of \eqref{sys_data}. One can then easily check that $(\underline{U},\underline{V})$, as defined above, is a strict subsolution of \eqref{sys_data} for any $K>0$. Now assume by contradiction that $U$ or $V$ take negative values somewhere in their respective domain. Because the previous subsolutions $(\underline{U},\underline{V})$ are negative up to the boundary we can define
$$
\theta^* = \min \{\theta > 0 \ : \quad  (U,V) \geq  \theta (\underline{U},\underline{V}) \} \in (0,+\infty).
$$
Then $(U-\theta^*\underline{U},V-\theta^*\underline{V})\geq 0$ is a supersolution of \eqref{sys_data} and attains zero either on the road or in the field. In the former case, the strong maximum principle implies that $U \equiv \theta^* \underline{U}$, which in turn implies that $V(y=0) \equiv \theta^* \underline{V} (y=0)$ and, thanks to the Hopf lemma, $V \equiv \theta^* \underline{V}$. This contradicts the fact that $(\underline{U},\underline{V})$ is not solution of \eqref{sys_data}. A similar argument leads to the same contradiction in the latter case, and we conclude that $(U,V) \geq 0$.

The same argument as above shows that $(U,V)>0$ as soon as $g_1 \not\equiv 0$ or $g_2 \not\equiv 0$ and we omit the details. Moreover, by the linearity of \eqref{sys_data}, it immediately follows that $(0,0)$ is the unique solution when $g_1 \equiv 0$ and $g_2 \equiv 0$ and, therefore, the system~\eqref{sys_data} always admits at most one solution.

{\it Step 3: existence.} Arguing as in the proof of Theorem~\ref{thm:cauchy} it is easy to see that the linear evolution problem
$$
\left\{
\begin{array}{ll}
\partial_t u = -(L_1 +M) (u,v) + g_1  & \mbox{ in } \mathbb{T} \times (0,+\infty) ,\vspace{3pt}\\
\partial_t v = - (L_2 +M) (u,v) +g_2  & \mbox{ in } \Omega_R \times (0,+\infty),\vspace{3pt}\\
E(u,v)=0 = v|_{y=R} & \mbox{ in } \mathbb{T} \times (0,+\infty), \vspace{3pt}\\
(u,v)_{t=0}=(\underline{U},\underline{V}),
\end{array}
\right.
$$
is solvable. Since the initial data $(\underline{U},\underline{V})=-(\overline{U},\overline{V})$ is a subsolution of the stationary problem the solution $(u ,v)$ is non-decreasing in time, and by the maximum principle
$$
\forall\,t\geq 0:\qquad (\underline{U},\underline{V})\leq (u,v) \leq (\overline{U},\overline{V}).
$$
Letting $t\to\infty$ we get by standard parabolic estimates and monotonicity that $(u,v)$ converges to a stationary solution $(U,V)$ of \eqref{sys_data}. Standard elliptic estimates give the desired regularity for $U(x),V(x,y)$ and the proof is complete.
% \textcolor{red}{THOMAS: The problem is not solvability of the system but the comparison principle, because $(u,v)$ defined above is no longer continuous with respect to time up to time~$0$. BRR comparison principle requires this condition, although it may probably be extended (the argument would probably be similar to what we did here before).}
% 
% 
% {\color{red} As in the previous version the subsolution do not satisfy the compatibility condition at $t=0$. However I am pretty sure that the evolution problem can be solved even without the initial compatibility: looking carefully at the proof of Theorem~\ref{thm:cauchy} (by iteration) we only need local \emph{interior} $C^{2,1}$ estimates to pass to the limit $n\to\infty$, and each iterated problem is solvable even without initial compatibility. I don't think anyone will notice that there is such a subtlety here... I would suggest to keep things simple and light as in this proof, if a refereee bothers us with these details we can always copy/paste the more accurate argument as in Thomas' proof (i.e. fix $\underline{V}$ and solve a linear parabolic PDE for $\underline{u}$, and get the new $\underline{U}=\lim\limits_{t\to\infty}\underline{u}(t)$)}
\end{proof}

According to Proposition~\ref{prop:truncated1} above, the system~\eqref{sys_data} is well-posed for $M$ large enough. By the Krein-Rutman theory, we will get as a consequence that:
\begin{prop}\label{prop_KR}
For any $R >0$ and $\alpha \geq 0$ there exists a unique principal eigenvalue $\Lambda_R (\alpha)$ of \eqref{eq:truncated_eigen} associated with positive and periodic eigenfunctions $(U_{\alpha,R},V_{\alpha,R})$ of \eqref{eq:truncated_eigen}, which are also unique up to multiplication by a positive factor.

Furthermore, $- \Lambda_R (\alpha) \leq M_{\alpha}$,
where $M_\alpha$ is defined by Proposition~\ref{prop:truncated1} and more precisely in~\eqref{eq:def_Malpha}.
\label{prop:exists_eigenvalue+upper_estimate}
\end{prop}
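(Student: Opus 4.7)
The idea is to apply a variant of the Krein--Rutman theorem to the shifted inverse operator supplied by Proposition~\ref{prop:truncated1}. Fix any $M>M_\alpha$ and denote by $\mathcal{L}_M$ the closed linear operator $(U,V)\mapsto\bigl(L_1(U,V)+MU,\,L_2(U,V)+MV\bigr)$ on the Banach space $X:=C(\mathbb{T})\times C(\overline{\Omega_R})$, acting on pairs of sufficiently smooth functions satisfying the Robin coupling $E(U,V)=0$ and the Dirichlet trace $V|_{y=R}=0$. Proposition~\ref{prop:truncated1} asserts that $\mathcal{L}_M$ realises a bijection onto $C^{0,r}(\mathbb{T})\times C^{0,r}(\overline{\Omega_R})$, so I would set $T_M:=\mathcal{L}_M^{-1}$; standard Schauder estimates combined with Arzel\`a--Ascoli then imply that $T_M$ is compact as an operator $X\to X$, while the strong maximum principle part of Proposition~\ref{prop:truncated1} guarantees that $T_M$ sends the cone $K$ of non-negative pairs into itself, mapping any non-trivial element to a pair $(U,V)$ with $U>0$ on $\mathbb{T}$ and $V>0$ on $\mathbb{T}\times[0,R)$.

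\textbf{Krein--Rutman and extraction of $\Lambda_R(\alpha)$.} I would then invoke the Krein--Rutman theorem for compact positive operators to obtain a positive spectral radius $\rho_M=r(T_M)>0$ which is a simple eigenvalue associated with an eigenvector $(U_{\alpha,R},V_{\alpha,R})\in K$, unique up to a positive scalar. Uniqueness and simplicity of this positive eigenvector in the non-standard 1D--2D setting would be established by the same $\theta^*$-type argument used in the proof of uniqueness of the steady state in Section~\ref{sec:liouville42}: if $(U_1,V_1)$ and $(U_2,V_2)$ were two linearly independent positive eigenvectors, the strong maximum principle on the road and in the field, together with Hopf's lemma applied through the Robin coupling, would force them to coincide up to a scalar. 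Unwinding $\mathcal{L}_M(U_{\alpha,R},V_{\alpha,R})=\tfrac{1}{\rho_M}(U_{\alpha,R},V_{\alpha,R})$ yields a periodic positive solution to~\eqref{eq:truncated_eigen} with
$$
\Lambda_R(\alpha):=\frac{1}{\rho_M}-M.
$$
Independence of the choice of shift $M$ is immediate, since any other admissible $M'>M_\alpha$ produces, via Krein--Rutman uniqueness for $T_{M'}$, the very same eigenpair $(U_{\alpha,R},V_{\alpha,R})$ and the corresponding eigenvalue must then also equal $\Lambda_R(\alpha)$.

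\textbf{The upper bound and main obstacle.} The inequality $-\Lambda_R(\alpha)\leq M_\alpha$ follows directly from the positivity of the spectral radius: $\rho_M>0$ implies $\Lambda_R(\alpha)=\tfrac{1}{\rho_M}-M>-M$ for every $M>M_\alpha$, and letting $M\searrow M_\alpha$ gives $\Lambda_R(\alpha)\geq-M_\alpha$. The most delicate point in this proof will be the correct implementation of Krein--Rutman in the presence of the Dirichlet trace at $y=R$: the cone $K$ has empty interior in $X$ (since the second component must vanish at $y=R$), so the "strongly positive" version of Krein--Rutman is not directly applicable. I would circumvent this either by appealing to the Krein--Rutman--Nussbaum statement for compact irreducible positive operators---irreducibility being supplied by the strong maximum principle and Hopf's lemma, which propagate strict positivity back and forth between the road and the field through the source terms $\mu U$ and $\nu V(\cdot,0)$---or, alternatively, by reformulating $T_M$ on a weighted subspace (functions dominated by an explicit positive function vanishing at $y=R$) in which the natural cone does have non-empty interior.
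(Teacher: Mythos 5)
Your overall strategy -- apply a Krein--Rutman-type argument to the resolvent $T_M=(\mathcal{L}_M)^{-1}$ supplied by Proposition~\ref{prop:truncated1}, then extract $\Lambda_R(\alpha)=\tfrac{1}{\rho_M}-M$, prove uniqueness by a $\theta^*$-sliding argument, and deduce the upper bound from $\tfrac{1}{\rho_M}-M>-M$ for all $M>M_\alpha$ -- is precisely the approach in the paper, and your identification of the empty-interior difficulty at $y=R$ as the delicate point is accurate. However, you then diverge from the paper at exactly this point, and I think your chosen detour is unnecessary: the paper does not work in $X=C(\mathbb{T})\times C(\overline{\Omega_R})$ but in
$$
X=\{(U,V)\in C^{1,r'}(\mathbb{T})\times C^{1,r'}(\mathbb{T}\times[0,R])\ :\ V|_{y=R}=0\},\qquad 0\leq r'<r.
$$
In this $C^1$-type topology the cone $X^+=\{U,V\geq 0,\ V|_{y=R}=0\}$ has \emph{nonempty} interior (e.g.\ $U\equiv1$, $V=1-y/R$), and the Hopf lemma gives that $T_M$ maps $X^+\setminus\{0\}$ into this interior, because the image satisfies $U>0$, $V>0$ on $\mathbb{T}\times[0,R)$ \emph{and} $\partial_y V|_{y=R}<0$; the last inequality is what makes the image $C^1$-interior even though it sits on the $C^0$-boundary. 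With this choice of space the classical Krein--Rutman theorem for compact strongly positive operators applies directly, and neither the irreducible/Nussbaum version nor a weighted-subspace reformulation is needed. Your alternative routes are plausible in principle (and your second one, passing to functions dominated by an explicit positive profile vanishing at $y=R$, is morally close to the $C^1$ trick), but both would require additional verification work that the Hölder-regularity choice sidesteps cleanly; it also dovetails with the elliptic regularity already invoked for compactness. The rest of your argument -- the compactness of $T_M$, the positivity, the $\theta^*$-uniqueness (which the paper also carries out to show independence of $M$), and the upper bound -- matches the paper's proof.
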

\begin{proof}
Let us fix any $0 \leq r' < r$. Define first the Banach space
$$X:=\{(U,V)\in C^{1,r'}(\mathbb{T})\times  C^{1,r'}(\mathbb{T}\times[0,R]):\quad  V|_{y=R}=0 \}
$$
with its natural H\"{o}lder norm, and the positive cone
$$
X^+:=\{(U,V)\in C^{1,r'}(\mathbb{T})\times C^{1,r'}(\mathbb{T}\times[0,R]):\quad U,V \geq 0,\  V|_{y=R}=0\}.
$$
Note that $X^+$ has non-empty interior in $X$ (take for instance $U(x)=1$ and $V (x,y) =1-y/R$).

Choose now any $M>M_{\alpha}$: by Proposition~\ref{prop:L_Muv=fg_solvable} one can define the operator $T: X \mapsto X$, which to any given pair $(g_1,g_2)$ associates the unique solution $(U,V)$ of \eqref{sys_data}. Note that $T$ is compact. This follows from elliptic estimates~\cite{GilbargTrudinger} and the fact that, for any functions $g_1$ and $g_2$, the $L^\infty$ norm of $(U,V)=T(g_1,g_2)$ is bounded from above by some constant which only depends on $\|g_1\|_\infty$ and $\|g_2\|_\infty$ (see the proof of Proposition~\ref{prop:truncated1} above). Moreover, using again Proposition~\ref{prop:L_Muv=fg_solvable}, $T (X^+) \subset X^+$, and $T$ is even strongly positive in the sense that : if $(g_1,g_2) \in X^+\setminus \{(0,0)\}$ and $(U,V)=T(g_1,g_2)$, then $(U,V) >0$ for $y\in [0,R)$ as well as $\partial_y V (x,R)<0$ (by Hopf lemma), which implies that $(U,V)$ is an interior point of $X^+$.

By the Krein-Rutman theorem we conclude that there exists a positive eigenvalue $\sigma_M>0$ of $T$ associated with positive (and bounded) eigenfunctions $U_M,V_M>0$. Transferring as usual in terms of the original PDE we have therefore an eigenvalue $\Lambda_R (\alpha) = \left( \frac{1}{\sigma_M} - M \right)$ of \eqref{eq:truncated_eigen}, associated with the same positive and periodic eigenfunctions.

It now remains to check that such a $\Lambda_R (\alpha)$ is unique (in particular, it did not depend on the choice of $M > M_\alpha$ above). Let $\Lambda_1$ and $\Lambda_2$ be two eigenvalues of \eqref{eq:truncated_eigen} associated with positive and periodic eigenfunctions, respectively $(U_1, V_1)$ and $(U_2, V_2)$, and set
$$
\theta^* := \min \{ \theta >0 \ : \quad \theta (U_{1},V_{1}) \geq (U_{2}, V_{2}) \}.
$$
If $\theta^* V_1 \equiv V_2 $, then trivially $\Lambda_{1} = \Lambda_{2}$ (the field equation does not depend on $U$). Otherwise, we know that $\theta^* (U_{1},V_{1}) \geq (U_{2} , V_{2})$ and, by Hopf lemma and $x$-periodicity, 
$$
\partial_y (\theta^* V_{1} -V_{2})|_{y=R} <0.
$$
Hence, by construction, either $\theta^* U_{1} = U_{2}$ for some $x_0 \in \mathbb{T}$, or $\theta^* V_{1} = V_{2}$ for some $(x_0, y_0) \in \mathbb{T} \times [0,R)$. 

In the former case, then
$$0 \geq \theta^* L_1 (U_{1},V_{1}) (x_0) - L_1 (U_{2} , V_{2}) (x_0) = (\Lambda_{1} - \Lambda_{2}) U_{2} (x_0),$$
hence $\Lambda_{1} \leq \Lambda_{2}$. One gets the same conclusion in the latter case if the contact point occurs in the interior, namely $y_0 >0$. Otherwise, $\theta^* V_1 > \theta^* V_2$ in $\Omega_R$ and $\theta^* V_1 (x_0,0) = V_2 (x_0,0)$, which leads to a contradiction by Hopf lemma. We conclude that $\Lambda_1 \leq \Lambda_2$, and thus by symmetry $\Lambda_1 = \Lambda_2$. Using the usual combination of the strong maximum principle and Hopf lemma, it is now straightforward to check that $\theta^* (U_1, V_1) \equiv (U_2,V_2)$, namely that the principal eigenfunction pair is unique up to multiplication by a positive factor.

Finally, we end the proof by noting that the upper bound $-\Lambda_R (\alpha) \leq M_{\alpha}$ follows from the fact that $\Lambda_R (\alpha) = \frac{1}{\sigma_M} - M > -M$ for any $M > M_\alpha$.
\end{proof}

\subsection{Passage to the whole field}

Before we pass to the limit as $R \to +\infty$, we need some monotonicity and bounds on the function $R \mapsto \Lambda_R (\alpha)$. This is the purpose of the proposition below:
\begin{prop}
For any $\alpha\geq 0$, the function $R>0 \mapsto -\Lambda_R (\alpha)$ is increasing. Moreover,
$$\max \left\{ d\alpha^2 + f'(0) - d\frac{\pi^2}{R^2} \; , \; D\alpha^2 - \lambda_\alpha \right\} <- \Lambda_R (\alpha) \leq M_\alpha, $$
where $M_\alpha$ was defined in Proposition~\ref{prop:truncated1}, and $\lambda_\alpha\in[\min\mu,\max\mu]$ is the principal eigenvalue of $-D\frac{d^2}{dx^2}-2\alpha D\frac{d}{dx}+\mu(x)$ on the torus.
\label{prop:monotonicity_wrt_R}
\end{prop}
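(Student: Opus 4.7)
The plan is to split the statement into three parts: the monotonicity of $R\mapsto-\Lambda_R(\alpha)$, the lower bound coming from the field equation, and the lower bound coming from the road equation. The upper bound $-\Lambda_R(\alpha)\leq M_\alpha$ has already been established in Proposition~\ref{prop:exists_eigenvalue+upper_estimate}, so there is nothing further to do there.

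For monotonicity, fix $R_1<R_2$ and denote by $(U_i,V_i):=(U_{\alpha,R_i},V_{\alpha,R_i})$ the principal eigenfunctions provided by Proposition~\ref{prop_KR}, with the obvious restriction of $(U_2,V_2)$ to $\Omega_{R_1}$ in what follows. The key observation is that, while $(U_2,V_2)$ solves the same interior equations and Robin boundary condition on $\{y=0\}$ as $(U_1,V_1)$, it satisfies $V_2(\cdot,R_1)>0$ instead of the Dirichlet condition $V_1(\cdot,R_1)=0$. Following the sliding/contact-point method already used in the uniqueness part of Proposition~\ref{prop_KR}, I would introduce
$$
\theta^*:=\min\left\{\theta>0\ :\quad \theta(U_2,V_2)\geq(U_1,V_1)\ \text{on}\ \overline{\Omega_{R_1}}\right\}
$$
and assume for contradiction that $\Lambda_{R_2}\geq\Lambda_{R_1}$. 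Setting $P:=\theta^*U_2-U_1$ and $W:=\theta^*V_2-V_1$, linearity of $L_1,L_2,E$ yields
$$
L_2(P,W)=\Lambda_{R_2}W+(\Lambda_{R_2}-\Lambda_{R_1})V_1,\qquad L_1(P,W(\cdot,0))=\Lambda_{R_2}P+(\Lambda_{R_2}-\Lambda_{R_1})U_1,
$$
both right-hand sides being nonnegative and the inhomogeneous terms $\nu W(\cdot,0)$ and $V_1,U_1$ being positive. Since $W(\cdot,R_1)>0$, the strong maximum principle forces $W>0$ in $\Omega_{R_1}$, Hopf lemma together with $E(P,W)=0$ rules out $W$ touching zero on the road (exactly as in Lemma~\ref{lem:positivity}), and a second strong maximum principle on the torus rules out $P$ vanishing. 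This contradicts the definition of $\theta^*$, so we must have $\Lambda_{R_2}<\Lambda_{R_1}$, which is the desired strict monotonicity.

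For the lower bound $d\alpha^2+f'(0)-d\pi^2/R^2<-\Lambda_R(\alpha)$, I would simply test the field equation $L_2(U_{\alpha,R},V_{\alpha,R})=\Lambda_R(\alpha)V_{\alpha,R}$ against $\phi(y):=\sin(\pi y/R)$, which vanishes at $y=0$ and $y=R$. Integrating over $\Omega_R=\mathbb{T}\times(0,R)$, the $x$-derivative term disappears by periodicity, the zeroth-order term contributes $(-d\alpha^2-f'(0))\int V_{\alpha,R}\phi$, and two Green-type integrations in $y$ yield
$$
\int_{\Omega_R}(-d\Delta V_{\alpha,R})\phi\,dx\,dy=\frac{d\pi^2}{R^2}\int_{\Omega_R}V_{\alpha,R}\phi\,dx\,dy-\frac{d\pi}{R}\int_{\mathbb{T}}V_{\alpha,R}(x,0)\,dx,
$$
where the surface term at $y=R$ vanishes thanks to the Dirichlet condition and to $\phi(R)=0$. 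Putting everything together and using $V_{\alpha,R}(\cdot,0)>0$ (itself a consequence of the strict positivity supplied by Proposition~\ref{prop_KR} together with Hopf lemma) gives strictly $\Lambda_R(\alpha)<d\pi^2/R^2-d\alpha^2-f'(0)$. The symmetric lower bound $D\alpha^2-\lambda_\alpha<-\Lambda_R(\alpha)$ is obtained analogously on the road: letting $\psi>0$ denote the principal eigenfunction on the torus of the formal adjoint $-D\frac{d^2}{dx^2}+2\alpha D\frac{d}{dx}+\mu(x)$ (which shares the same principal eigenvalue $\lambda_\alpha$), test the road equation against $\psi$, integrate by parts on $\mathbb{T}$, and use $\int\nu V_{\alpha,R}(\cdot,0)\psi>0$ (since $\nu\geq\not\equiv 0$ and $V_{\alpha,R}(\cdot,0),\psi>0$) to obtain the strict inequality $\Lambda_R(\alpha)+D\alpha^2<\lambda_\alpha$. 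The parenthetical bound $\lambda_\alpha\in[\min\mu,\max\mu]$ is immediate after integrating the eigenfunction equation over $\mathbb{T}$, the derivative terms integrating to zero by periodicity.

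The main technical obstacle will be the strict monotonicity step, because the sign of the lower-order coefficient $-d\alpha^2-f'(0)-\Lambda_{R_2}$ in the equation for $W$ is not controlled, so one must combine the interior strong maximum principle, the Hopf lemma at $y=0$, and the Robin relation $E(P,W)=0$ in exactly the right order to propagate positivity from the artificial boundary $y=R_1$ all the way down to the road and through the road. The two eigenvalue bounds, by contrast, are direct one-line integrations once the right test function is chosen.
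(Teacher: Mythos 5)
Your proposal is correct, and it diverges from the paper's proof on the two lower bounds while matching it on the monotonicity step. The monotonicity argument is the same contact-point comparison of eigenfunctions on nested domains (modulo reversed roles of $R_1,R_2$). The two lower bounds, however, use genuinely different mechanisms. For $d\alpha^2 + f'(0) - d\tfrac{\pi^2}{R^2}$, the paper averages $V_{\alpha,R}$ in $x$, solves the resulting ODE explicitly as $C\sin(\omega(R-y))$, and derives a contradiction from the sign of the sine on $[0,R)$; you instead test against $\sin(\pi y/R)$ and read off the strict inequality from the boundary term $\frac{d\pi}{R}\int_{\mathbb{T}} V_{\alpha,R}(\cdot,0)>0$ — a one-step Green's identity, arguably cleaner since it bypasses the explicit ODE solution. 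For $D\alpha^2 - \lambda_\alpha$, the paper applies the Doeblin-type change of unknown $z = U_{\alpha,R}/\tilde U_\alpha$ and a maximum-principle contradiction; you instead test the road equation against the adjoint principal eigenfunction $\psi$ of $-D\frac{d^2}{dx^2}+2\alpha D\frac{d}{dx}+\mu$, obtaining $(\lambda_\alpha - \Lambda_R - D\alpha^2)\int U_{\alpha,R}\psi = \int \nu V_{\alpha,R}(\cdot,0)\psi>0$ directly. Both approaches are sound; yours are shorter and exploit duality, while the paper's are more constructive.

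Two small remarks on the monotonicity step. First, the parenthetical ``both right-hand sides being nonnegative'' is not literally true: since $\Lambda_{R_2}<0$ (recall $-\Lambda_R \geq d\alpha^2+f'(0)>0$) the term $\Lambda_{R_2}W$ is nonpositive. What matters — and what you in fact use — is that $(L_2 - \Lambda_{R_2})W = (\Lambda_{R_2}-\Lambda_{R_1})V_1 \geq 0$, i.e.\ $W$ is a nonnegative supersolution of the shifted operator. Second, the ``main technical obstacle'' you flag (uncontrolled sign of the zeroth-order coefficient $-d\alpha^2-f'(0)-\Lambda_{R_2}$) is not actually an obstruction: the strong maximum principle for a \emph{nonnegative} function $W\geq 0$ satisfying $-d\Delta W - 2d\alpha\partial_x W + c(x,y)W \geq 0$ holds for any bounded $c$, since one can absorb the positive part of $c$ (writing $-d\Delta W - 2d\alpha\partial_x W - c^+W \geq -(c+c^+)W\geq 0$ and applying the $c\leq 0$ version). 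The same absorption is implicit in the paper's proof.
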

\begin{proof}
Let us briefly sketch the proof of the monotonicity with respect to $R$. Fix $R_1 > R_2 >0$ and let $\Lambda_1 = \Lambda_{R_1} (\alpha)$, $\Lambda_2 = \Lambda_{R_2} (\alpha)$ be associated with the positive and periodic eigenfunctions $(U_1,V_1)$ and $(U_2,V_2)$. Since $R_1 > R_2$, one can proceed as before and, multiplying $(U_1,V_1)$ by a well-chosen constant $\theta>0$, assume without loss of generality that $(U_1,V_1)\geq(U_2,V_2)$ in $\Omega_{R_2} =\mathbb{T}\times(0,R_2)$ and that there is a contact point either between $U_1,U_2$ at some $x_0 \in\mathbb{T}$, or between $V_1,V_2$ at some $(x_0,y_0)\in \overline{\Omega_{R_2}}$ (or both simultaneously). Proceeding exactly as in the proof of Proposition~\ref{prop:exists_eigenvalue+upper_estimate}, we conclude that $\Lambda_1 \leq \Lambda_2$. Furthermore, since $V_1 \not \equiv V_2$ (thanks to the Dirichlet condition of $V_2$ at $y=R_2$), it even follows from the strong maximum principle that $\Lambda_1 < \Lambda_2$.

Let us now prove the lower bounds on $-\Lambda_R (\alpha)$ (the upper bound was proved in Proposition~\ref{prop:exists_eigenvalue+upper_estimate}). Let $\tilde{U}_{\alpha}(x)>0$ be the principal eigenfunction associated with $\lambda_\alpha$. Letting $z=\frac{U_{\alpha,R}}{\tilde{U}_\alpha}$ one can check that $L_{1}(U_{\alpha,R},V_{\alpha,R})=\Lambda_R(\alpha)U_{\alpha,R}$ in \eqref{eq:truncated_eigen} can be rewritten as
$$-D z'' - 2D \left( \alpha + \frac{\tilde{U}_\alpha ' }{\tilde{U}_\alpha} \right) z'  + (-D\alpha^2 + \lambda_\alpha - \Lambda_R (\alpha) ) z = \nu \frac{V_{\alpha,R}}{\tilde{U}_\alpha} \geq 0.
$$
We proceed by contradiction and assume that $-D\alpha^2 + \lambda_\alpha - \Lambda_R (\alpha) \leq 0$. Then any positive constant is a subsolution of the above equation satisfied by $z$. In particular, since $z>0$ is periodic, we infer that it is identically equal to its minimum and thus constant. It then follows that
$$\nu \frac{V_{\alpha,R}}{\tilde{U}_\alpha} = (-D\alpha^2 + \lambda_\alpha - \Lambda_R (\alpha)) z\leq 0,$$
which contradicts the positivity of $V_{\alpha,R}$.

It only remains to prove the other lower bound, namely
$$-\Lambda_R (\alpha) > d\alpha^2 + f'(0) - d \frac{\pi^2}{R^2}.$$
For fixed $R>0$, denote $\omega_R=\frac{\pi}{R}$. Arguing again by contradiction, if our lower bound does not hold then
 $$
 \omega:=\sqrt{\frac{d\alpha^2+f'(0)+\Lambda_R(\alpha)}{d}}\geq\omega_R>0.
 $$
 Averaging in $x$ the equation for $V_{\alpha,R}$ we see that $\Phi_R (y):=\int_{\mathbb{T}}V_{\alpha,R}(x,y)dx$ solves
 $$
 \Phi_R ''(y)+\omega^2\Phi_R(y)=0.
$$
Since $\Phi_R (y) >0$ for $y \in [0,R)$ and $\Phi_R (R)=0$ (recall that the eigenfunction $V_{\alpha,R} (x,y)$ is positive up to the road and satisfies zero Dirichlet boundary conditions at $y=R$), we get that 
$$\Phi_R (y) = C \sin (\omega (R-y) )$$
for some constant $C >0$. 
If $\omega\geq \omega_R$ then $[0,R)$ contains at least half a period of $\Phi_R$, hence $\Phi_R$ must have a non-positive value in $[0,R)$. This contradicts the strict positivity up to the road and the proposition is proved.
\end{proof}

We can now infer from the previous proposition that for any $\alpha$ fixed $\Lambda_R (\alpha)$ converges as $R \to +\infty$, and we thus define the generalized principal eigenvalue
\begin{equation}\label{eigen_def}
\Lambda (\alpha) := \lim_{R \to +\infty} \Lambda_R (\alpha).
\end{equation}
It remains to prove that the associated pair of eigenfunctions also converges to a non-trivial limit as $R \to +\infty$. From now on, we will denote by $(U_{R},V_{R})$ and $\Lambda_{R}$ the principal eigenfunction pair and eigenvalue of the truncated problem (omitting the $\alpha$ dependence for convenience).
\begin{lem}\label{lem:lim1}
Assume that $\|U_{R}\|_{L^{\infty}(\mathbb{T})}=1$, then there exists a positive constant $C_1$, which is independent of $R$ large enough,  such that $$\|V_R (\cdot,0)\|_{L^\infty (\mathbb{T})} > C_1.$$
\end{lem}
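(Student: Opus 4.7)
The plan is to argue by contradiction: suppose there exists a sequence $R_n\to+\infty$ along which $\|U_{R_n}\|_{L^\infty(\mathbb{T})}=1$ while $\|V_{R_n}(\cdot,0)\|_{L^\infty(\mathbb{T})}\to 0$. The strategy is to pass to the limit in the road equation of \eqref{eq:truncated_eigen}, identify the limit $U_\infty$ as the principal eigenfunction on the torus of the operator $-D\tfrac{d^2}{dx^2}-2\alpha D\tfrac{d}{dx}+\mu(x)$, and then extract a contradiction from the strict lower bound $-\Lambda_R(\alpha)>D\alpha^2-\lambda_\alpha$ in Proposition~\ref{prop:monotonicity_wrt_R}.

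By Propositions~\ref{prop:exists_eigenvalue+upper_estimate} and~\ref{prop:monotonicity_wrt_R}, the eigenvalues $\Lambda_{R_n}$ stay in the bounded interval $[-M_\alpha,\,-(D\alpha^2-\lambda_\alpha))$, so up to extraction we may assume $\Lambda_{R_n}\to\Lambda_\infty$. The first line of \eqref{eq:truncated_eigen} rewrites as
$$-DU_{R_n}''-2D\alpha U_{R_n}'+(\mu(x)-D\alpha^2-\Lambda_{R_n})U_{R_n}=\nu(x)V_{R_n}(x,0),$$
whose right-hand side tends to $0$ uniformly on $\mathbb{T}$ by assumption. Since $\|U_{R_n}\|_{L^\infty(\mathbb{T})}=1$ and the coefficients are uniformly bounded and $L$-periodic, classical elliptic estimates on the torus provide uniform $C^{1,\beta}$ bounds on $U_{R_n}$; extracting further, $U_{R_n}\to U_\infty$ in $C^1(\mathbb{T})$. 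The compactness of $\mathbb{T}$ transfers the normalization to the limit, so that $U_\infty\geq 0$ with $\|U_\infty\|_{L^\infty(\mathbb{T})}=1$, and in particular $U_\infty\not\equiv 0$.

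Passing to the limit in the equation yields
$$-DU_\infty''-2D\alpha U_\infty'+\mu(x)U_\infty=(D\alpha^2+\Lambda_\infty)U_\infty\quad\text{on the torus},$$
and the strong maximum principle upgrades $U_\infty\geq 0$ to $U_\infty>0$. By the uniqueness of the periodic principal eigenvalue associated with a positive eigenfunction for this one-dimensional operator (Krein-Rutman on $\mathbb{T}$), this forces $D\alpha^2+\Lambda_\infty=\lambda_\alpha$, i.e.\ $-\Lambda_\infty=D\alpha^2-\lambda_\alpha$. On the other hand, by strict monotonicity of $R\mapsto -\Lambda_R(\alpha)$ applied at some fixed $R_0$ smaller than every $R_n$, together with the strict lower bound of Proposition~\ref{prop:monotonicity_wrt_R}, we get $-\Lambda_\infty=\lim_n(-\Lambda_{R_n})\geq -\Lambda_{R_0}(\alpha)>D\alpha^2-\lambda_\alpha$, contradicting the previous equality.

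The main delicate point is preserving the $L^\infty$-normalization through the limit so as to guarantee $U_\infty\not\equiv 0$: this relies crucially on the compactness of the one-dimensional torus $\mathbb{T}$. It is also precisely the \emph{strict} (rather than merely non-strict) inequality in Proposition~\ref{prop:monotonicity_wrt_R} that makes the final contradiction possible.
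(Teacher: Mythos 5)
Your proof is correct and follows essentially the same route as the paper: argue by contradiction, use $L$-periodicity (compactness of $\mathbb{T}$) together with elliptic estimates to pass to the limit in the road equation with vanishing right-hand side, conclude via the strong maximum principle and Krein--Rutman uniqueness on the torus that the limiting eigenvalue must equal $-D\alpha^2+\lambda_\alpha$, and then contradict the strict lower bound $-\Lambda_R(\alpha)>D\alpha^2-\lambda_\alpha$ from Proposition~\ref{prop:monotonicity_wrt_R}. The only cosmetic difference is that you extract a sub-subsequence to make $\Lambda_{R_n}$ converge, whereas by~\eqref{eigen_def} the whole sequence $\Lambda_R(\alpha)$ already converges monotonically to $\Lambda(\alpha)$ — this makes $\Lambda_\infty=\Lambda(\alpha)$ automatic, as the paper uses directly.
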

\begin{proof}
We argue by contradiction. If the proposition does not hold, then we can find a sequence $R_{k}\to\infty$ such that 
$$
\|V_{R_{k}}(\cdot,0)\|_{L^\infty(\mathbb{T})}\rightarrow 0.
$$
Moreover, we assumed that $\|U_R\|_{L^\infty (\mathbb{T})} =1$. In particular, for all $k$, there exists $x_k$ such that
$$\|U_{R_k} \|_{L^\infty (\mathbb{T})} = U (x_k) = 1.$$
Then the sequences of functions $(U_{R_k})_k$ and $(V_{R_k} (\cdot,0))_k$ are uniformly bounded in $L^\infty (\mathbb{T})$. By standard elliptic estimates~\cite{ GilbargTrudinger}, one can extract a subsequence such that $U_{R_{k}}\rightarrow U_{\infty}$ and $x_k \to x_\infty$ as $k \to +\infty$, where $U_\infty \geq 0$ satisfies 
$$-DU_{\infty}''-2D\alpha U_{\infty}'+(-D\alpha ^{2}+\mu(x))U_{\infty}=\Lambda (\alpha) U_{\infty},$$
together with
$$U_\infty (x_\infty) =1.$$
By the strong maximum principle $U_\infty$ is positive, and by construction it is also periodic. By uniqueness of the principal eigenvalue $\lambda_\alpha$ of the periodic operator
$$-D U'' - 2D\alpha U' + \mu (x) U,$$
we get that
$$\Lambda (\alpha )=-D\alpha^{2}+\lambda_{\alpha}.$$
Because $\Lambda_{R} (\alpha)$ is decreasing with respect to $R$, it follows that for all $R >0$:
$$\Lambda_{R} (\alpha )>-D\alpha^{2}+\lambda_\alpha,$$
which contradicts the previous proposition.
%Now denote by $U_\alpha$ the positive and periodic eigenfunction of the operator above, associated with the eigenvalue $\lambda_\alpha$. Proceeding as before and up to multiplication by some constant, we can assume without loss of generality that $U_{R_{k}}\geq U_{\alpha}$, and that there exists a point $x_0$ where $U_{R_{k}}(x_{0})=U_{\alpha}(x_{0})$. Through 
%$$-DU_{R_{k}}''-2D\alpha U_{R_{k}}'+(-D\alpha ^{2}+\mu(x))U_{R_{k}}-\nu(x)V_{R_{k}}(x,0)=\Lambda _{R_{k}} (\alpha) U_{R_{k}}$$
%and $V_{R_{k}}(x,0)>0$, we get 
%$$-DU_{R_{k}}''-2D\alpha U_{R_{k}}'+(\mu(x)-\lambda_\alpha)U_{R_{k}}>0.$$
%Because $U_{R_{k}}-U_{\alpha}\geq 0$ and $(U_{R_{k}}-U_{\alpha})(x_{0}) = 0$, then we have 
%$$0\geq -DU_{R_{k}}''(x_0)-2D\alpha U_{R_{k}}'(x_0)+(\mu(x)-\mu_{\alpha})U_{R_{k}}(x_0)>0.$$
%We have reached a contradiction.
\end{proof}

\begin{lem}\label{loc bounded}
Normalizing with $\|V_{R}(\cdot ,0)\|_{L^{\infty}(\mathbb{T})}=1$, we have the upper estimate
$$
0\leq V_{R}(x,y)\leq 1+\frac{\nu_1}{d}y,\qquad (x,y)\in\overline{\Omega_R}.
$$
\end{lem}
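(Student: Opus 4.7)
The plan is to reduce the two-dimensional pointwise bound to a one-dimensional ODE comparison via the horizontal slice maximum, with the constant $\nu_1/d$ emerging from the integrated Robin condition. Introduce $\Psi(y) := \max_{x \in \mathbb{T}} V_R(x, y)$ and the slice average $\Phi(y) := \int_{\mathbb{T}} V_R(x, y)\, dx$, both vanishing at $y = R$, with $\Psi(0) = 1$ by the normalization and $\Phi(0) > 0$. Integrating $L_2 V_R = \Lambda_R V_R$ over $\mathbb{T}$ (the $\partial_x$ terms cancel by periodicity) yields the linear ODE $\Phi'' + (K/d)\Phi = 0$ on $(0, R)$, where $K := \Lambda_R + d\alpha^2 + f'(0)$.

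The first step is to argue that $\Psi$ itself is a viscosity subsolution of the same ODE. Pick $y_0 \in (0, R)$, a $C^2$ function $\psi$ touching $\Psi$ from above at $y_0$, and $x_0 \in \mathbb{T}$ realising $V_R(x_0, y_0) = \Psi(y_0)$. Then $\partial_x V_R(x_0, y_0) = 0$ and $\partial_{xx} V_R(x_0, y_0) \leq 0$ since $x \mapsto V_R(x, y_0)$ is maximised at $x_0$, so the field equation forces $\partial_{yy}V_R(x_0, y_0) \geq -(K/d)\Psi(y_0)$. Since $\psi(\cdot) \geq V_R(x_0, \cdot)$ locally with equality at $y_0$, this yields $\psi''(y_0) + (K/d)\psi(y_0) \geq 0$.

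Let $\phi$ be the unique classical solution of $\phi'' + (K/d)\phi = 0$ on $[0, R]$ with $\phi(0) = 1$ and $\phi(R) = 0$. By Proposition~\ref{prop:monotonicity_wrt_R} we have $K/d < \pi^2/R^2$, which ensures that the first Dirichlet eigenvalue of $-(d^2/dy^2 + K/d)$ on $[0, R]$ is positive; the comparison principle for this 1D operator then gives $\Psi \leq \phi$ on $[0, R]$, as $\Psi$ and $\phi$ match on $\{0, R\}$. By uniqueness $\phi$ also coincides with the normalised average $\Phi/\Phi(0)$.

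It remains to bound $\phi \leq 1 + (\nu_1/d) y$. Integrating the Robin condition $-d\partial_y V_R(x, 0) = \mu(x) U_R(x) - \nu(x) V_R(x, 0)$ over $\mathbb{T}$ and using $\mu U_R \geq 0$ and $\nu \leq \nu_1$ gives $-d\Phi'(0) \geq -\nu_1 \Phi(0)$, hence $\phi'(0) = \Phi'(0)/\Phi(0) \leq \nu_1/d$. When $K \geq 0$, $\phi$ is concave ($\phi'' = -(K/d)\phi \leq 0$), so its tangent at $y = 0$ majorises it: $\phi(y) \leq 1 + \phi'(0) y \leq 1 + (\nu_1/d) y$. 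When $K < 0$, $\phi$ is convex with boundary values $1$ and $0$, hence $\phi \leq 1 \leq 1 + (\nu_1/d) y$. Either way $V_R(x, y) \leq \Psi(y) \leq \phi(y) \leq 1 + (\nu_1/d) y$. The most delicate point is the 1D comparison when $K > 0$, where the ODE has a positive zero-order coefficient; the estimate $K/d < \pi^2/R^2$ from Proposition~\ref{prop:monotonicity_wrt_R} is precisely what makes the comparison principle available.
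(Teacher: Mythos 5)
Your proof is correct but takes a genuinely different route from the paper. You reduce the two-dimensional bound to a one-dimensional comparison: the horizontal-slice maximum $\Psi(y) = \max_{x\in\mathbb{T}} V_R(x,y)$ is a viscosity subsolution of the averaged ODE, you compare it with the \emph{exact} normalized average $\phi = \Phi/\Phi(0)$, and the constant $\nu_1/d$ then drops out of the integrated Robin condition via $\phi'(0)=\Phi'(0)/\Phi(0)\leq \nu_1/d$. The paper instead never invokes $\Psi$: it derives from the same integrated Robin condition a bound $\omega\leq\overline\omega$, where $\overline\omega$ solves the transcendental equation $-\overline\omega\cotan(\overline\omega R)=\nu_1/d$, builds the \emph{two-dimensional} barrier $\overline{V}_R(y)=\sin(\overline\omega(R-y))/\sin(\overline\omega R)$ with this larger frequency, and proves $V_R\leq\overline{V}_R$ by a sliding argument in the field (where the inequality $\omega\leq\overline\omega$ and $z\geq 0$ make $\overline{L}[z]\geq 0$ without a zero-order term). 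Your approach has the merit of avoiding the auxiliary $\overline\omega$ and comparing with the sharpest possible one-dimensional barrier; the paper's has the merit of staying entirely within the classical strong maximum principle and Hopf lemma framework already in use throughout the article.

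The one step you should tighten is the bare invocation of "the comparison principle". When $K>0$ the operator $\psi\mapsto -\psi''-(K/d)\psi$ is not proper, so the textbook viscosity comparison theorem does not apply directly; the positivity of the principal Dirichlet eigenvalue (which you correctly extract from Proposition~\ref{prop:monotonicity_wrt_R}) is exactly what restores it, but this is a Berestycki--Nirenberg--Varadhan type fact and deserves either a citation or a two-line proof. A self-contained argument: choose $\beta$ with $\sqrt{K/d}<\beta<\pi/R$ and set $\zeta(y)=\cos\bigl(\beta(y-R/2)\bigr)>0$ on $[0,R]$, so that $\zeta''+(K/d)\zeta=(K/d-\beta^2)\zeta<0$. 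If $\Psi-\phi$ had a positive maximum, then so would $(\Psi-\phi)/\zeta$, attained at some $y_1\in(0,R)$ with value $m>0$; the $C^2$ function $\phi+m\zeta$ then touches $\Psi$ from above at $y_1$, and your viscosity subsolution property forces $m\bigl(\zeta''+(K/d)\zeta\bigr)(y_1)\geq 0$, a contradiction. With this inserted (and noting that for $K\leq 0$ the operator is proper and the comparison is standard), the proof is complete.
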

\begin{proof}
Note that, if there exists some $R_{0}$ such that $\Lambda_{R_{0}}(\alpha) +d\alpha^{2}+f'(0)\leq 0$, then by monotonicity $\Lambda_{R}(\alpha) +d\alpha^{2}+f'(0)< 0$ for all $R>R_0$. In this case, one can apply the maximum principle and conclude that $V_R$ cannot reach a positive interior maximum. Therefore, recalling that $0\leq V_R|_{y=0}\leq 1$ and $V_R|_{y=R}=0\leq 1$, we get that $V_R (x,y) \leq 1$ and our statement holds.

However, we only know that $\Lambda_R (\alpha) + d\alpha^2 + f'(0) - d\frac{\pi^2}{R^2} < 0$ by Proposition~\ref{prop:monotonicity_wrt_R}. Thus it remains to consider the case
$$\omega^2 = 
\frac{\Lambda_R (\alpha) + d\alpha^2 + f'(0)}{d} > 0
$$
with $0<\omega<\omega_R:=\frac{\pi}{R}$. We denote again the average $\Phi_{R}(y)=\int _{\mathbb{T}}V_{R}(x,y) dx$, which satisfies as before $\Phi_R''+\omega^2\Phi_R =0$ and thus
$$
\Phi_R (y) = C \sin (\omega (R-y) )
$$
for some $C>0$. The fact that $0<\omega<\omega_R=\pi/R$ of course ensures that $\Phi_R (y) $ is positive for $y \in[0,R)$,
but we also require that
$$d\Phi_{R}'(0)< \nu_1 \Phi_{R}(0),$$
because of $d\partial_y V(x,0)=\nu(x)V(x,0)-\mu(x)U(x)\leq \nu_1 V(x,0)$. Thus,
$$- \omega \cotan  (\omega R) \leq \frac{\nu_1}{d} .$$
It is then easy to see that $\omega \in [0,\overline{\omega}]$, where $\overline{\omega}=\overline{\omega}(R)$ is the unique solution of
$$
-\overline{\omega} \cotan(\overline{\omega}R ) =\frac{\nu_1}{d}>0
$$
in $(0, \frac{\pi}{R})$.

Introducing the function
 $$
 \overline{V}_{R}(x,y):=\frac{1}{\sin \left(\overline{\omega}R\right)}\sin \left(\overline{\omega}(R-y) \right),
 $$
we claim now that this gives an upper barrier, that is
\begin{equation}\label{claim42}
 \forall (x,y)\in\mathbb{T}\times[0,R]:\qquad V_R(x,y)\leq\overline{V}_R(y).
\end{equation}
Note first that $V_R$, $\overline{V}_R>0$ for $y\in[0,R)$ and both are $C^1$ up to the boundary $\{y=R\}$ where $\partial_y\overline{V}_R<0$, thus
$$
\theta^*:=\min\{ \theta>0\ : \quad \ \theta \overline{V}_{R}\geq V_{R} \mbox{ for }(x,y)\in \Omega_{R}\} \geq 1
$$
is well-defined. The inequality $\theta^* \geq 1$ immediately follows from our normalization on the road.

We establish now \eqref{claim42}. Assuming by contradiction that $\theta^*>1$ and letting the elliptic operator $\overline{L}=-\Delta-2\alpha\partial_x$, then $\overline{L}[V_R]=\omega^2V_R$ and $\overline{L}[\overline{V}_R]=\overline{\omega}^2\overline{V}_R$. As a consequence $z:=\theta^*\overline{V}_R-V_R\geq 0$ satisfies
$$
\overline{L}[z]=\overline{\omega}^2\theta^*\overline{V}_R-\omega^2V_R=(\overline{\omega}^2-\omega^2)\theta^*\overline{V}_R+\omega^2(\theta^*\overline{V}_R-V_R)\geq 0\qquad\text{in }\Omega_R ,
$$
because $\omega\leq\overline{\omega}$ and $z=\theta^*\overline{V}_R-V_R\geq 0$. 

Since $\overline{V}_R\geq V_R$ on the road and as $\theta^*>1$ clearly $z|_{y=0} >0$, so the strong maximum principle shows that $z>0$ for $y\in [0,R)$. By the Hopf lemma and $x$-periodicity we also get $\partial_y z|_{y=R}<0$. This easily implies that $z\geq \eps \overline{V}_R$ in $\Omega_R$ for some small $\eps>0$ and $(\theta^*-\eps)\overline{V}_R\geq V_R$, which in turn contradicts the minimality of $\theta^*$ and thus entails our claim \eqref{claim42}.

Recalling that $0<\overline{\omega}<\omega_R=\pi/R$, we see that $\overline{V}_R(y)=C\sin(\overline{\omega}(R-y))$ is concave in $y\in[0,R]$. Thus by definition of $\overline{\omega}$:
$$
V_R (x,y) \leq \overline{V}_R(y)\leq \overline{V}_R(0)+\overline{V}_R'(0)y=1-\overline{\omega} \cotan(\overline{\omega}R) y=1+\frac{\nu_1}{d}y
$$
for all $y \in [0,R]$ and $x \in \mathbb{T}$, and the proof is complete.
\end{proof}
\begin{lem}\label{lem:lim3}
Normalizing with $\|U_{R}\|_{L^{\infty}(\mathbb{T})}=1$, we have
$$
\|V_R (\cdot,0)\|_{L^\infty (\mathbb{T})} \leq  C_2
$$
for some $C_2>0$ independent of $R$.
\end{lem}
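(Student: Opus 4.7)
The plan is to argue by contradiction. Suppose the conclusion fails: there is a subsequence $R_k \to +\infty$ (not relabelled) along which
\[
M_k := \|V_{R_k}(\cdot, 0)\|_{L^\infty(\mathbb{T})} \longrightarrow +\infty.
\]
Setting $\tilde U_k := U_{R_k}/M_k$ and $\tilde V_k := V_{R_k}/M_k$, the pair $(\tilde U_k, \tilde V_k)$ still solves the linear eigensystem \eqref{eq:truncated_eigen} with eigenvalue $\Lambda_{R_k}(\alpha)$, and satisfies
\[
\|\tilde U_k\|_{L^\infty(\mathbb{T})} = \frac{1}{M_k} \longrightarrow 0, \qquad \|\tilde V_k(\cdot, 0)\|_{L^\infty(\mathbb{T})} = 1.
\]
Crucially, Lemma \ref{loc bounded} applied to the rescaled eigenfunction provides the $k$-uniform bound $0 \leq \tilde V_k(x, y) \leq 1 + \frac{\nu_1}{d} y$ for $(x,y) \in \mathbb{T} \times [0, R_k]$.

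The next step is to pass to the limit $k \to \infty$ locally uniformly. Since $\Lambda_{R_k}(\alpha)$ lies in a bounded interval by Propositions \ref{prop:exists_eigenvalue+upper_estimate} and \ref{prop:monotonicity_wrt_R}, and $\tilde V_k$ is locally uniformly bounded, boundary Schauder estimates for the Robin-type coupling yield $C^{2,r}_{\text{loc}}$ bounds on $\tilde V_k$ up to the road, which in turn (via the road equation) bound $\tilde U_k$ in $C^{2,r}(\mathbb{T})$. A diagonal extraction produces a further subsequence along which $\tilde U_k \to 0$ in $C^2(\mathbb{T})$ and $\tilde V_k \to \tilde V_\infty$ in $C^2_{\text{loc}}(\mathbb{T} \times [0, +\infty))$, where $\tilde V_\infty \geq 0$ satisfies
\[
L_2(0, \tilde V_\infty) = \Lambda(\alpha)\, \tilde V_\infty \quad \text{in } \mathbb{T} \times (0, +\infty),
\]
together with $-d\partial_y \tilde V_\infty(x, 0) + \nu(x) \tilde V_\infty(x, 0) = 0$ on the road. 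Passing to the limit in the road equation for $\tilde U_k$ (whose left-hand side vanishes in $C^0(\mathbb{T})$) produces the key identity
\[
\nu(x)\, \tilde V_\infty(x, 0) \equiv 0 \quad \text{on } \mathbb{T}.
\]

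The contradiction is then obtained from a discrepancy between interior positivity and boundary vanishing. Since $\mathbb{T}$ is compact, one can pick $x_k \in \mathbb{T}$ with $\tilde V_k(x_k, 0) = 1$; extracting $x_k \to x_\infty$ and using uniform convergence on $\mathbb{T} \times \{0\}$ gives $\tilde V_\infty(x_\infty, 0) = 1$, in particular $\tilde V_\infty \not\equiv 0$. Passing to the limit in the lower bound of Proposition \ref{prop:monotonicity_wrt_R} yields $-\Lambda(\alpha) \geq d\alpha^2 + f'(0)$, which is exactly the sign condition needed to apply the classical strong maximum principle and Hopf lemma to the linear operator governing $\tilde V_\infty$; combined with continuity at $(x_\infty, 0)$ this upgrades nonnegativity to $\tilde V_\infty > 0$ in the open field $\mathbb{T} \times (0, +\infty)$. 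Since $\nu \geq \not\equiv 0$ is continuous, some $x_0$ satisfies $\nu(x_0) > 0$, and the identity above forces $\tilde V_\infty(x_0, 0) = 0$. Hopf's lemma then gives $\partial_y \tilde V_\infty(x_0, 0) > 0$, in direct contradiction with the Robin condition $\partial_y \tilde V_\infty(x_0, 0) = \nu(x_0)\tilde V_\infty(x_0, 0)/d = 0$.

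The main technical subtlety lies in securing $C^2$ convergence of $\tilde U_k \to 0$ on $\mathbb{T}$, which is what allows the second-order terms to drop out when extracting the limit identity on $\nu \tilde V_\infty(\cdot, 0)$. This convergence follows from the fact that the inhomogeneous term $\nu(x)\tilde V_k(x, 0)$ is bounded in $C^{0,r}(\mathbb{T})$ (by boundary Hölder regularity of $\tilde V_k$), so elliptic regularity on the compact torus gives $\tilde U_k$ bounded in $C^{2,r}$, and compact embedding combined with uniform convergence to zero forces $C^2$ convergence of the whole sequence.
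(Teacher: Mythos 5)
Your proof is correct and follows essentially the same route as the paper: argue by contradiction, renormalize so that $\|V_R(\cdot,0)\|_{L^\infty(\mathbb{T})}=1$ and $\|U_R\|_{L^\infty(\mathbb{T})}\to 0$, use Lemma~\ref{loc bounded} and elliptic estimates to pass to a limit $V_\infty$ solving \eqref{eq:lim_V_U=0}, then pass to the limit in the road equation to deduce $\nu(\cdot)V_\infty(\cdot,0)\equiv 0$, and finally contradict the Robin condition via Hopf's lemma at a point where $\nu>0$. The only differences are cosmetic: you spell out more carefully the non-triviality of $V_\infty$ (by tracking the maximum point on the road) and the interior positivity via the strong maximum principle (with the sign observation $-\Lambda(\alpha)\geq d\alpha^2+f'(0)$), steps the paper leaves implicit before invoking Hopf's lemma.
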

\begin{proof}
Assume by contradiction that our statement does not hold. Suitably normalizing, we can therefore assume that $\|V_{R_j}(\cdot,0)\|_{L^{\infty}(\mathbb{T})}=1$ and $\|U_{R_{j}}\|_{L^{\infty}(\mathbb{T})}\to 0$ for some subsequence $R_j \to +\infty$. By \eqref{eigen_def}, Lemma \ref{loc bounded}, and using standard elliptic estimates as before, we get up to extraction of a subsequence that $V_{R_j}$ converges (locally uniformly on compact sets of $\overline{\Omega}$) to a non-trivial function $V_{\infty}\geq 0$, which satisfies
\begin{equation}
\begin{cases}
-d\Delta V_\infty -2d\alpha\partial_{x}V_\infty -(d\alpha^{2}+f'(0)+\Lambda(\alpha))V_\infty=0 ,&x\in\mathbb{T},y>0, \vspace{3pt}\\
d\partial _{y}V_\infty (x,0)=\nu(x)V_\infty (x,0),&x\in\mathbb{T}.
\end{cases}
\label{eq:lim_V_U=0}
\end{equation}
%From Proposition~\ref{prop:monotonicity_wrt_R} we have $-\Lambda(\alpha)=\lim\limits_{R_j\to\infty}-\Lambda_{R_j}(\alpha)\geq d\alpha^2+f'(0)$, thus the zero-th order coefficient in the field equation \eqref{eq:lim_V_U=0} is $-(d\alpha^{2}+f'(0)+\Lambda(\alpha))\geq 0$. 
Moreover, standard elliptic estimates also allow to pass to the limit on the road equation and get
$$
\nu (x) V_\infty (x,0) \equiv 0.
$$
Choose now any $x_0$ such that $\nu (x_0) >0$. Then $V_{\infty}(x_0,0)=0$, and by Hopf lemma $\partial_yV_{\infty}(x_0,0)>0$. This contradicts the boundary condition in \eqref{eq:lim_V_U=0} and the proof is complete.
\end{proof}

\begin{proof}[Proof of Theorem~\ref{th:eigenvalue}]
We recall that $\Lambda (\alpha)$ was defined as the limit of $\Lambda_R (\alpha)$ as $R \to +\infty$ in~\eqref{eigen_def}, and postpone the proof of its concavity to the next subsection.

Combining the above Lemmas~\ref{lem:lim1}, \ref{loc bounded} and \ref{lem:lim3}, together with elliptic estimates~\cite{GilbargTrudinger}, we can now pass to the limit as $R \to +\infty$. We get a pair of non-negative and periodic eigenfunctions $(U_\alpha,V_\alpha)$ of \eqref{eq:principal_eigenvalue} in the whole field with $\Lambda = \Lambda (\alpha)$, which satisfy
$$
U_\alpha (x) \leq 1 , \quad V_\alpha (x,y) \leq C (1+y),
$$
after a suitable renormalization. The fact that these eigenfunctions are positive is a straightforward application of the strong maximum principle and Hopf lemma, as we already used extensively, and part $(i)$ in Theorem~\ref{th:eigenvalue} is proved.

Let us now briefly check part $(ii)$. Let some $\Lambda$ be such that there exists a positive and periodic eigenfunction $(U,V)$ of \eqref{eq:principal_eigenvalue}. Proceeding as in the proof of Proposition~\ref{prop:monotonicity_wrt_R} (more precisely, the proof of the monotonicity of $\Lambda_R$ with respect to~$R$), one can check that $\Lambda < \Lambda_R (\alpha)$ for any $R >0$. Passing to the limit $R\to +\infty$, it immediately follows that $\Lambda \leq \Lambda (\alpha)$.
\end{proof}
%
%%%%%%%%%%%%%%%%%%%%%%%%%%%%%%%%%%%%%%%%%%%%%%%%%%%%%%%%%%%%%%%%%%%%%%%%%
\subsection{Further properties}
In order to complete the proof of Theorem~\ref{th:eigenvalue}, it only remains to prove the concavity of $\Lambda (\alpha)$, which will play an important role in the study of the spreading speeds for exponentially decaying initial data. We exploit again the construction of $\Lambda (\alpha)$ via the more convenient framework of truncated problems. 
\begin{prop}\label{concavity}
The functions $\alpha \mapsto \Lambda (\alpha)$ and $\alpha \mapsto \Lambda_R (\alpha)$, for all $R>0$, are concave.
\end{prop}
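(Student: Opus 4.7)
Since $\Lambda(\alpha)=\lim_{R\to+\infty}\Lambda_R(\alpha)$ is a pointwise limit and pointwise limits of concave functions are concave, it suffices to prove that $\alpha\mapsto\Lambda_R(\alpha)$ is concave for every fixed $R>0$. I would fix $\alpha_0,\alpha_1\geq 0$, $\theta\in(0,1)$, $\alpha_\theta:=(1-\theta)\alpha_0+\theta\alpha_1$, and set $\Lambda_\theta:=(1-\theta)\Lambda_R(\alpha_0)+\theta\Lambda_R(\alpha_1)$. Denoting by $(U_i,V_i)$ the positive $L$-periodic principal eigenpairs of \eqref{eq:truncated_eigen} at $\alpha_i$ provided by Proposition~\ref{prop_KR}, the plan is to use the geometric-mean test pair
$$
U_\theta:=U_0^{1-\theta}\,U_1^\theta,\qquad V_\theta:=V_0^{1-\theta}\,V_1^\theta,
$$
which is positive, $L$-periodic in $x$, and vanishes linearly at $y=R$ with the same rate as each $V_i$ (hence $V_\theta|_{y=R}=0$). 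I would then verify that $(U_\theta,V_\theta)$ is a \emph{super-eigenfunction} of \eqref{eq:truncated_eigen} at $\alpha=\alpha_\theta$ with spectral parameter $\Lambda_\theta$; comparing with the genuine principal pair at $\alpha_\theta$ will then force $\Lambda_\theta\leq\Lambda_R(\alpha_\theta)$, i.e.\ concavity.

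\textbf{Interior super-solution inequalities.} The algebraic engine is the identity
$$
-\frac{\Delta W_\theta}{W_\theta}=-(1-\theta)\frac{\Delta W_0}{W_0}-\theta\frac{\Delta W_1}{W_1}+\theta(1-\theta)\bigl|\nabla\log W_0-\nabla\log W_1\bigr|^2
$$
for $W_\theta=W_0^{1-\theta}W_1^\theta$, together with $(1-\theta)\alpha_0^2+\theta\alpha_1^2-\alpha_\theta^2=\theta(1-\theta)(\alpha_0-\alpha_1)^2$. Inserting both into $L_2(U_\theta,V_\theta)/V_\theta$ and invoking the two eigenvalue relations $L_2(U_i,V_i)=\Lambda_i V_i$, the drift cross-terms combine with the Jensen residual and the convexity defect to form perfect squares, yielding
$$
\frac{L_2(U_\theta,V_\theta)}{V_\theta}\geq\Lambda_\theta+d\theta(1-\theta)\bigl[(\alpha_0-\alpha_1)+\partial_x\log\tfrac{V_0}{V_1}\bigr]^2+d\theta(1-\theta)\bigl(\partial_y\log\tfrac{V_0}{V_1}\bigr)^2\geq\Lambda_\theta.
$$
For the road equation, the coupling $\nu V_\theta(x,0)/U_\theta(x)=\nu(V_0/U_0)^{1-\theta}(V_1/U_1)^\theta$ is controlled by Young's inequality $(V_0/U_0)^{1-\theta}(V_1/U_1)^\theta\leq(1-\theta)V_0/U_0+\theta V_1/U_1$, after which the very same completion of the square delivers $L_1(U_\theta,V_\theta)\geq\Lambda_\theta U_\theta$.

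\textbf{Road boundary identity and conclusion.} Using $-d\partial_yV_i(x,0)=\mu U_i-\nu V_i$ and the chain rule $\partial_yV_\theta/V_\theta=(1-\theta)\partial_yV_0/V_0+\theta\partial_yV_1/V_1$, a direct manipulation reduces the exchange operator to
$$
E(U_\theta,V_\theta)=\mu(x)\,U_\theta(x)\,\bigl[(1-\theta)q^\theta+\theta q^{-(1-\theta)}-1\bigr],\qquad q(x):=\frac{U_0(x)V_1(x,0)}{U_1(x)V_0(x,0)},
$$
and weighted AM-GM yields $(1-\theta)q^\theta+\theta q^{-(1-\theta)}\geq(q^\theta)^{1-\theta}(q^{-(1-\theta)})^\theta=1$, so $E(U_\theta,V_\theta)\geq 0$ on the road. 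Setting now $t^*:=\sup\{t>0:\ t(U_{\alpha_\theta,R},V_{\alpha_\theta,R})\leq(U_\theta,V_\theta)\}\in(0,+\infty)$—finite and positive thanks to the common linear vanishing at $y=R$ and the strict positivity elsewhere—the contact-point dichotomy used in the uniqueness step of Proposition~\ref{prop:exists_eigenvalue+upper_estimate} applies \emph{mutatis mutandis}: interior contact in $\Omega_R$ is ruled out by the strong maximum principle applied to the non-negative pair $(U_\theta-t^*U_{\alpha_\theta,R},V_\theta-t^*V_{\alpha_\theta,R})$, contact on $\{y=0\}$ for $V$ is ruled out by the Hopf lemma combined with $E(U_\theta,V_\theta)\geq 0$, and torus contact for $U$ is treated analogously; each case delivers $\Lambda_\theta\leq\Lambda_R(\alpha_\theta)$. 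Concavity of $\Lambda$ follows by pointwise passage to the limit.

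\textbf{Expected obstacle.} The interior $L_1,L_2$ estimates are a routine but slightly intricate completion of the square driven by Jensen's inequality applied to the geometric mean. The delicate point is the boundary step: it is not \emph{a priori} obvious that the geometric-mean ansatz is compatible with the Robin coupling. One must observe the exact cancellation of the $\nu V_\theta$ contributions between the two sides of $E$ and recognize that the residual factors as $\mu U_\theta[\phi(q)-1]$ with $\phi(q)=(1-\theta)q^\theta+\theta q^{-(1-\theta)}$ minimized precisely at $q=1$. Without this clean algebraic reduction the test pair would fail the relaxed Robin condition, and one would be forced into a genuinely different construction—presumably a variational characterization of $\Lambda_R$ as an $\inf$ or $\sup$ over admissible test pairs.
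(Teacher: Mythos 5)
Your proof is correct and uses the same core idea as the paper: concavity via geometric-mean test pairs, with the logarithmic-Laplacian identity producing the perfect-square residuals in $L_1$ and $L_2$, Young/AM--GM handling the $\nu V(x,0)/U$ coupling, and weighted AM--GM closing the Robin exchange condition $E\geq 0$. The only difference is organizational: the paper first establishes the sup-inf characterization~\eqref{eqn:concav_formula} of $\Lambda_R(\alpha)$ over the admissible set $\mathcal{E}$ and then feeds geometric means of admissible pairs into it, whereas you take the geometric mean of the actual eigenpairs, verify the super-eigenfunction inequalities directly, and conclude with the same sliding/contact argument that the paper uses to prove its variational formula -- a slightly more streamlined packaging of the identical calculation.
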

\begin{proof}
Since the pointwise limit of concave functions is again a concave function, we only need to prove the concavity of $\Lambda_R (\alpha)$. Following the steps of \cite[Proposition~5.7]{BH-front}, we begin by showing that
\begin{equation}\label{eqn:concav_formula}
\Lambda_R (\alpha) = \sup_{(\phi, \psi) \in \mathcal{E}} \min \left\{ \inf_{\mathbb{T}} \frac{L_1 (\phi,\psi)}{\phi} , \inf_{\Omega_R} \frac{L_2 (\phi,\psi)}{\psi} \right\},
\end{equation}
where
$$\begin{array}{l}
\mathcal{E}= \left\{ (\phi,\psi) \in C^2 (\mathbb{T}) \times( C^2 (\Omega_R) \cap C^1 (\overline{\Omega_R}) )\ : \right.  \vspace{5pt} \\
\hspace{2cm} \left. \phi>0, \psi>0 \mbox{ in } \mathbb{T} \times [0,R), \ \psi (y=R) =0 > \partial_y  \psi (y=R) , \ E (\phi,\psi) \geq 0  \right\}.
\end{array}
$$
Note that the pair $(U_{\alpha,R} , V_{\alpha,R})\in\mathcal{E}$ and, therefore, 
$$
\Lambda_R (\alpha) \leq \sup_{(\phi, \psi) \in \mathcal{E}} \min \left\{ \inf_{\mathbb{T}} \frac{L_1 (\phi,\psi)}{\phi} , \inf_{\Omega_R} \frac{L_2 (\phi,\psi)}{\psi} \right\}.
$$
Now proceed by contradiction and assume that there exists $(\phi,\psi) \in \mathcal{E}$ such that
$$
\Lambda_R (\alpha) < \min \left\{ \inf_{\mathbb{T}} \frac{L_1 (\phi,\psi)}{\phi} , \inf_{\Omega_R} \frac{L_2 (\phi,\psi)}{\psi} \right\}.
$$
Proceeding as before and thanks to the definition of the admissible set $\mathcal{E}$, one can find some critical $\theta >0$ such that $U_{\alpha,R} \leq \theta \phi$ and $V_{\alpha,R} \leq \theta \psi$ with either $U_{\alpha,R} (x_0) = \theta \phi (x_0)$, $V_{\alpha,R} (x_0,y_0) = \theta \psi (x_0,y_0)$ or $\partial_y V_{\alpha,R} (x_0, R) = \theta \partial_y \psi (x_0,R)$ for some $x_0 \in \mathbb{T}$ and $y_0 \in [0,R)$.

Note that $(\theta \phi - U_{\alpha,R}, \theta \psi - V_{\alpha,R}) \geq 0$ satisfies
\begin{equation*}
\left\{
\begin{array}{l}
L_1 (\theta \phi - U_{\alpha,R}, \theta \psi - V_{\alpha,R})  > \Lambda_R (\alpha) (\theta \phi - U_{\alpha,R}) , \vspace{3pt}\\
L_2 (\theta \phi - U_{\alpha,R}, \theta \psi - V_{\alpha,R}) > \Lambda_R (\alpha) (\theta \psi - V_{\alpha,R}), \vspace{3pt}\\
E(\theta \phi - U_{\alpha,R}, \theta \psi - V_{\alpha,R}) \geq 0, \vspace{3pt}\\
(\theta \psi - V_{\alpha,R}) (y=R) = 0.
\end{array}
\right.
\end{equation*}
By the strong maximum principle and the Hopf lemma as before, it follows that $\theta \phi \equiv U_{\alpha,R}$ and $\theta \psi \equiv V_{\alpha,R}$, which is a contradiction. Hence, \eqref{eqn:concav_formula} is proved.

Let us now proceed to the proof of concavity. We first introduce 
$$
\mathcal{E}_\alpha =  \{ (\tilde{\phi},\tilde{\psi})  \ : \quad \exists (\phi,\psi) \in \mathcal{E} \mbox{ such that } \tilde{\phi} = e^{\alpha x} \phi  \mbox{ and } \tilde{\psi} = e^{\alpha x} \psi \}.
$$
It is then clear that
\begin{eqnarray*}
\Lambda_R (\alpha) &= & \sup_{(\phi, \psi) \in \mathcal{E}} \min \left\{ \inf_{\mathbb{T}} \frac{L_1 (\phi,\psi)}{\phi} , \inf_{\Omega_R} \frac{L_2 (\phi,\psi)}{\psi} \right\}\\
&=& \sup_{(\tilde{\phi}, \tilde{\psi}) \in \mathcal{E}_\alpha} \min \left\{ \inf_{\mathbb{R}} \frac{-D  \tilde{\phi}'' + \mu \tilde{\phi} - \nu \tilde{\psi}}{\tilde{\phi}} , \inf_{\R \times [0,R)} \frac{-d \tilde{\psi}'' -f'(0) \tilde{\psi}}{\tilde{\psi}} \right\}.
\end{eqnarray*}
Let $\alpha_1 \geq 0$ and $\alpha_2 \geq 0$, and choose any $(\tilde{\phi}_1,\tilde{\psi}_1)=e^{\alpha_1 x}(\phi_1,\psi_1) \in \mathcal{E}_{\alpha_1}$, $(\tilde{\phi}_2 , \tilde{\psi}_2)=e^{\alpha_2x}(\phi_2,\psi_2) \in \mathcal{E}_{\alpha_2}$ with $(\phi_i,\psi_i)\in\mathcal E$. Fixing any $t\in (0,1)$ and defining $\alpha = t\alpha_1 + (1-t) \alpha_2$, we claim that
$$
(\tilde{\phi},\tilde{\psi}) := (e^{t \ln \tilde{\phi}_1+ (1-t) \ln \tilde{\phi}_2 } , e^{t \ln \tilde{\psi}_1 + (1-t) \ln \tilde{\psi_2}})=e^{\alpha x}\left(\phi_1^t\phi_2^{1-t},\psi_1^t\psi_2^{1-t}\right)\in \mathcal E_\alpha.
% \in \mathcal{E}_\alpha.
$$
Indeed since $(\phi_i,\psi_i)\in \mathcal{E}$ we have that $\psi_1,\psi_2$ both vanish at $y=R$ with non-zero slopes $p_1 (x) := \partial_y \psi_1 (x,y=R)$, $p_2 (x):= \partial_y \psi_2 (x,y=R)$, and it is then easy to check that $\psi=\psi_1^{t}\psi_2^{1-t}$ also vanishes at $y=R$ with non-zero slope $p=p_1^tp_2^{1-t}$ and $\psi\in \mathcal{C}^1(\overline{\Omega_R})$. The other conditions for $(\phi,\psi)=(\phi_1^t\phi_2^{1-t},\psi_1^t\psi_2^{1-t})\in\mathcal{E}$ also follow from straightforward computations, thus $(\tilde\phi,\tilde\psi)=e^{\alpha x}(\phi,\psi)\in\mathcal E_{\alpha}$ as claimed.
 
Then one can check that
\begin{align*}
&\min \left\{ \inf_{\mathbb{R}} \frac{-D  \tilde{\phi}'' + \mu \tilde{\phi} - \nu \tilde{\psi}}{\tilde{\phi}} , \inf_{\R \times [0,R)} \frac{-d \tilde{\psi}'' -f'(0) \tilde{\psi}}{\tilde{\psi}} \right\}\\
&\hspace{2cm} \geq  t \min \left\{ \inf_{\mathbb{R}} \frac{-D  \tilde{\phi}_1'' + \mu \tilde{\phi}_1 - \nu \tilde{\psi}_1}{\tilde{\phi}_1} , \inf_{\R \times [0,R)} \frac{-d \tilde{\psi}_1'' -f'(0) \psi_1}{\tilde{\psi}_1} \right\} \\
&\hspace{2cm}\phantom{\geq} + (1-t) \min \left\{ \inf_{\mathbb{R}} \frac{-D  \tilde{\phi}_2'' + \mu \tilde{\phi}_2 - \nu \tilde{\psi}_2}{\tilde{\phi}_2} , \inf_{\R \times [0,R)} \frac{-d \tilde{\psi}_2'' - f'(0) \tilde{\psi}_2}{\tilde{\psi}_2} \right\}.
\end{align*}
As $(\tilde{\phi}_1,\tilde{\psi}_1)$ and $(\tilde{\phi}_2,\tilde{\psi}_2)$ were chosen arbitrarily in respectively $\mathcal{E}_{\alpha_1}$, $\mathcal{E}_{\alpha_2}$, it follows that $\Lambda_R (\alpha) \geq t \Lambda_R (\alpha_1) + (1-t) \Lambda_R (\alpha_2)$. This concludes the proof.
\end{proof}
%
%
%
%%%%%%%%%%%%%%%%%%%%%%%%%%%%%%%%%%%%%%%%%%%%%%%%%%%%%%%%%%%%%%%%%%%%%%%%%%%%%%%%%%%%%%%%%%%%%
%%%%%%%%%%%%%%%%%%%%%%%%%%%%%%%%%%%%%%%%%%%%%%%%%%%%%%%%%%%%%%%%%%%%%%%%%%%%%%%%%%%%%%%%%%%%%
%
\section{Spreading speed of solutions}
\label{section:spreading}
We are now in a position to prove Theorem~\ref{main:spread1}. We begin by a characterization of the spreading speed $c^* (D)$ thanks to the generalized principal eigenvalue $\Lambda (\alpha)$ that we constructed in the previous section (see Theorem~\ref{th:eigenvalue} above).

Indeed by analogy with the single equation we introduce 
\begin{equation}\label{critical_speed}
c^* (D) := \min_{\alpha >0} \frac{-\Lambda (\alpha)}{\alpha}\in [c^*_{KPP},+\infty),
\end{equation}
where $c^*_{KPP} = 2\sqrt{df'(0)}>0$. This $c^* (D)$ is well-defined thanks to the following inequalities
\begin{equation}\label{eigen_bounds_whole}
\max \{ D\alpha^2 - \lambda_\alpha \;  , \; d\alpha^2 + f'(0) \} \leq -\Lambda (\alpha) \leq M_\alpha,
\end{equation}
which are in turn immediate consequences of Proposition~\ref{prop:monotonicity_wrt_R} and~\eqref{eigen_def}. Indeed note from \eqref{eigen_bounds_whole} that $-\Lambda$ has quadratic growth as $\alpha \to +\infty$ and $-\Lambda (\alpha)\geq f'(0)>0$, hence by continuity $\frac{-\Lambda (\alpha)}{\alpha}$ reaches its minimum as in \eqref{critical_speed}. Using again \eqref{eigen_bounds_whole}, it is also clear that $c^*(D)\geq \min \limits_{\alpha>0}\frac{d\alpha^2+f'(0)}{\alpha}=c^*_{KPP}$.

In particular the equation
$$
\Lambda (\alpha) = - c \alpha
$$
admits a solution $\alpha >0$ if and only if $c \geq c^* (D)$. Moreover, by the concavity of $\Lambda (\alpha)$ (Proposition~\ref{concavity}), if $c > c^* (D)$ then there are exactly two positive solutions. We can now establish the upper bound in our propagation result:
\begin{proof}[Proof of the first part of Theorem~\ref{main:spread1}]
For any solution $(u,v)$ of \eqref{eq:model}-\eqref{eq:initial_data} with non-negative and continuous compactly supported initial data we need to show that for any $c > c^* (D)$ and $R>0$ it holds
$$
\lim_{t \to +\infty}\   \sup_{x \leq -ct\; ,\; 0 \leq y \leq R} ( u (t,x) + v(t,x,y)) = 0.
$$
Recalling the discussion at the beginning of Section~\ref{sec:linear_pb}, $c\geq c^*(D)$ is a necessary and sufficient condition for existence of a positive solution of the form~\eqref{eq:exponential_supersolutions} to the linearized problem \eqref{eq:model_lin}. Thanks to the KPP assumption $f(u)\leq f'(0)u$ any such solution is also a supersolution of the original nonlinear problem \eqref{eq:model}.

More precisely, for any $c>c^*(D)$ choose some $c' \in [c^* (D),c)$ and $\alpha$ such that $-\Lambda (\alpha) = \alpha c'$, and let $(U_\alpha, V_\alpha)$ be the associated positive generalized eigenfunctions from Theorem~\ref{th:eigenvalue}. The pair $e^{\alpha (x+c't)} (U_\alpha,V_\alpha)$ lies, up to multiplication by some constant, above the initial datum $(u_0,v_0)$ at time $t=0$. Recalling that the solution $(u,v)$ is uniformly bounded, we can apply Proposition~\ref{prop:comparison1} to get
$$
\sup_{x \leq -ct\; ,\; 0 \leq y \leq R} ( u (t,x) + v(t,x,y)) \leq e^{\alpha (c' - c) t} \left( \|U_\alpha\|_{L^\infty (\mathbb{T})} + \| V_\alpha \|_{L^\infty (\Omega_R)}\right).
$$
and the desired conclusion immediately follows.
\end{proof}

From now on, this section will be dedicated to the proof of the inner spreading estimate, namely the fact that for any $0 < c < c^* (D)$ and $R>0$,
$$\lim_{t \to +\infty}\   \sup_{- ct \leq x \leq 0 \; ,\; 0 \leq y \leq R} ( |u (t,x) - U(x)| + |v(t,x,y) - V(x,y)|) = 0,$$
where $(U,V)$ is the unique positive and bounded stationary solution of \eqref{eq:model}.

\subsection{The construction of subsolutions}\label{sec:subsub}
In order to construct suitable subsolutions and obtain a lower estimate in the propagation Theorem~\ref{main:spread1} we first need the following technical result:
\begin{prop}\label{prop:analyticity}
For fixed $R>0$ the maps
$$
\alpha\mapsto\Lambda_R (\alpha)\in \mathbb{C}
\quad\mbox{and}\quad
\alpha\mapsto(U_{\alpha,R},V_{\alpha,R})\in \mathbb{C}^2
$$ can be holomorphically extended to a complex neighborhood of the positive real axis $\{\alpha>0\}$. The complex-valued eigenfunctions still satisfy \eqref{eq:truncated_eigen} together with some normalization, and moreover $\alpha\mapsto U_{\alpha,R},V_{\alpha,R}$ are
continuous with respect to the $C^{1,r}(\mathbb{T },\R^2)\times C^{1,r}(\overline{\Omega_R} , \R^2)$ topology (here we mean the usual topology for the real and imaginary parts of each component $U,V$ after identifying $\mathbb{C}^2\cong\R^2\times\R^2$).
\end{prop}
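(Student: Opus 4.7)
The plan is to apply Kato's analytic perturbation theory of simple eigenvalues to the compact resolvent $T_\alpha$ introduced in Proposition~\ref{prop:L_Muv=fg_solvable}. Fix $\alpha_0>0$ and complexify the Banach space used there: let
$$
X := \bigl\{(U,V) \in C^{1,r'}(\mathbb{T}; \mathbb{C}) \times C^{1,r'}(\mathbb{T}\times[0,R]; \mathbb{C}) : V|_{y=R} = 0 \bigr\},
$$
and regard the pair $(L_1, L_2)$ together with the boundary condition $E(U,V) = 0$ as a closed operator $\mathcal{L}_\alpha$ on $X$. Since the coefficients of $L_1, L_2$ are polynomial in $\alpha$, the family $\alpha \mapsto \mathcal{L}_\alpha$ is entire as a family of unbounded operators on $X$.

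First I would extend the resolvent construction $T_\alpha := (\mathcal{L}_\alpha + M)^{-1}$ for some $M > M_{\alpha_0}$ from real $\alpha_0$ to a complex neighborhood. For real $\alpha_0$, the proof of Proposition~\ref{prop:L_Muv=fg_solvable} carries over to the complexified setting by applying the supersolution barriers and strong maximum principle to real and imaginary parts separately; combined with Schauder estimates this yields a bounded compact inverse $T_{\alpha_0} : X \to X$. For complex $\alpha$ near $\alpha_0$, the factorization
$$
\mathcal{L}_\alpha + M = (\mathcal{L}_{\alpha_0} + M)\bigl(I + T_{\alpha_0}(\mathcal{L}_\alpha - \mathcal{L}_{\alpha_0})\bigr),
$$
together with the observation that $\mathcal{L}_\alpha - \mathcal{L}_{\alpha_0}$ is first-order with constant coefficients that are polynomial in $\alpha$ and vanish at $\alpha_0$, makes $T_{\alpha_0}(\mathcal{L}_\alpha - \mathcal{L}_{\alpha_0})$ small in operator norm for $\alpha$ close to $\alpha_0$. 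A Neumann series argument then yields invertibility and shows that $\alpha \mapsto T_\alpha \in \mathcal{B}(X)$ is holomorphic on a small complex disk around $\alpha_0$, with $T_\alpha$ compact throughout.

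Next I would isolate a holomorphic branch of eigenvalues via the Riesz projection. For real $\alpha$, Proposition~\ref{prop:exists_eigenvalue+upper_estimate} together with the strong positivity of $T_\alpha$ provides a principal eigenvalue $\sigma_0(\alpha) := 1/(\Lambda_R(\alpha) + M) > 0$ which is isolated and algebraically simple --- a standard consequence of the strongly positive Krein-Rutman setting, since a generalized eigenvector other than a multiple of the principal one would contradict the uniqueness of the positive eigenfunction. Choose a small positively oriented circle $\gamma \subset \mathbb{C}$ around $\sigma_0(\alpha_0)$ enclosing no other spectrum of $T_{\alpha_0}$. By upper semicontinuity of the spectrum for compact operators, $\gamma$ remains in the resolvent set of $T_\alpha$ for $\alpha$ near $\alpha_0$, so the projection
$$
P_\alpha := \frac{1}{2\pi i} \oint_\gamma (\sigma I - T_\alpha)^{-1}\, d\sigma
$$
depends holomorphically on $\alpha$. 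Its trace $\operatorname{tr} P_\alpha = \operatorname{rank} P_\alpha$ is holomorphic and integer-valued, hence constantly equal to $\operatorname{rank} P_{\alpha_0} = 1$. It follows that $\sigma(\alpha) := \operatorname{tr}(T_\alpha P_\alpha)$ is the unique eigenvalue of $T_\alpha$ inside $\gamma$ and depends holomorphically on $\alpha$; setting $\Lambda_R(\alpha) := 1/\sigma(\alpha) - M$ then extends the original real-valued map holomorphically.

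For the eigenfunctions, I would fix the real principal pair $\Phi_0 := (U_{\alpha_0, R}, V_{\alpha_0, R})$ and a continuous linear functional $\ell$ on $X$ with $\ell(\Phi_0) = 1$ (for instance $\ell(U,V) := U(0)$), then define
$$
(U_{\alpha,R}, V_{\alpha,R}) := P_\alpha \Phi_0 \,/\, \ell(P_\alpha \Phi_0).
$$
Both numerator and denominator are holomorphic in $\alpha$ and the denominator is nonzero near $\alpha_0$ by continuity, so the ratio is a holomorphic $X$-valued branch lying in the one-dimensional range of $P_\alpha$, and therefore satisfies~\eqref{eq:truncated_eigen} with $\Lambda = \Lambda_R(\alpha)$ and the boundary conditions built into $X$. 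The required continuity with respect to the $C^{1,r}$ topology then follows from standard interior and boundary Schauder estimates applied to the eigenfunction system, using the uniform boundedness of $\Lambda_R(\alpha)$ near $\alpha_0$ to control the $C^{2,r}$ norm in terms of the $C^{1,r'}$ norm already delivered by Kato's framework. I expect the main obstacle to be the algebraic simplicity of $\sigma_0(\alpha_0)$, which is exactly what lets the Riesz projection have rank one and underlies the whole scheme; the rest is routine abstract perturbation theory once this structural point is in place.
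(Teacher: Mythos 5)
Your proposal is correct and follows exactly the route the paper indicates: the paper's proof is nothing more than a two-line reference to Kato's perturbation theory for isolated eigenvalues of algebraic and geometric multiplicity one, and your write-up supplies the details the paper omits (complexification of $X$, Neumann-series extension of the resolvent $T_\alpha$, Riesz projection $P_\alpha$ of constant rank one, and the bootstrapping from the $C^{1,r'}$ topology to $C^{1,r}$ via Schauder estimates). The only small imprecision is your one-line justification of algebraic simplicity: a generalized eigenvector $\psi$ with $(T_{\alpha_0}-\sigma_0)\psi=\Phi_0$ need not be positive, so ``uniqueness of the positive eigenfunction'' does not directly rule it out; the cleanest route is either to cite that algebraic simplicity is part of the Krein--Rutman theorem for strongly positive compact operators, or to test against the (positive) principal eigenfunction of the adjoint $T_{\alpha_0}^*$ to get $\langle \Phi_0^*,\Phi_0\rangle=0$ and a contradiction --- but the paper makes the same terse assertion, so this is not a gap relative to its own standard.
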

Note that the restriction to truncated domains is important here. Indeed, the generalized principal eigenvalue $\Lambda (\alpha)$ in the infinite cylinder is not an analytic function of $\alpha$, see the comment after Proposition~\ref{prop:eigen_enhance} later on.
%
% \begin{proof} 
 Since the equations are polynomial in~$\alpha$ and the domain is bounded our statement follows by standard perturbation theory combined with usual elliptic regularity and compactness arguments. We omit the details and refer to  \cite[Chapter 7]{Kato-perturbation}, observing that the principal eigenvalue $\Lambda_R (\alpha)$ is isolated and has algebraic and geometric multiplicity 1, thanks to the uniqueness of the positive eigenfunction (see Proposition~\ref{prop_KR} above).\\
% \end{proof}

% We will construct now subsolutions of the linearized problem \eqref{eq:model_lin} with compact supports and moving with speed $c \in [0,c^* (D))$ arbitrarily close to $c^* (D)$.

For large $R>0$ we define
$$
c^*_R = \min_{\alpha >0} \frac{-\Lambda_R (\alpha)}{\alpha} \in (0, +\infty),
$$
which is the critical speed of the linearized problem in the truncated domain. Namely, system~\eqref{eq:model_lin} (restricted to $\Omega_R$) admits solutions of the exponential type \eqref{eq:exponential_supersolutions} if and only if $c \geq c^*_R$. For the construction of subsolutions we shall need the second technical result below:

% By Dini's theorem, the convergence $\Lambda_R (\alpha) \searrow \Lambda (\alpha)$ as $R \to +\infty$ is locally uniform, thus
% $$\lim_{R \to +\infty} c_R^* \nearrow c^* (D).$$
% In particular, for any $c \in [0, c^*(D))$, we can fix some $R$ large enough so that $c < c^*_R$. 

%
\begin{lem}
Let $\alpha^*_R$ be the unique real and positive solution of $-\Lambda_R (\alpha_R^*) = c_R^* \alpha_R^*$.
There exist some small $r>0$ and $\delta>0$ such that, for any $c \in [c^*_R -\delta ,c^*_R)$, there exists a solution $\alpha (c)  \in \mathbb{C } \setminus \R$ of
$$
-\Lambda_R(\alpha)=\alpha c ,
$$
which also satisfies $| \alpha^*_R - \alpha (c) | \leq r$.
\label{lem:Rouche}
\end{lem}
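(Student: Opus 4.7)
The strategy is a straightforward Rouché argument in the complex plane, made possible by the holomorphic extension from Proposition~\ref{prop:analyticity}. Set
$$
G(\alpha,c) := -\Lambda_R(\alpha) - \alpha c,
$$
which is holomorphic in $\alpha$ on a complex neighborhood of the positive real axis and jointly continuous in $(\alpha,c)$. At the minimizer $\alpha^*_R$ of $\alpha \mapsto -\Lambda_R(\alpha)/\alpha$, differentiating and using $-\Lambda_R(\alpha^*_R) = c^*_R\alpha^*_R$ yields $-\Lambda_R'(\alpha^*_R) = c^*_R$, hence
$$
G(\alpha^*_R,c^*_R)=0, \qquad \partial_\alpha G(\alpha^*_R,c^*_R) = -\Lambda_R'(\alpha^*_R) - c^*_R = 0.
$$

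The first step is to argue that $\alpha^*_R$ is an isolated zero of $G(\cdot,c^*_R)$ of some finite \emph{even} order $k \geq 2$. Isolation and finiteness of $k$ follow from the fact that $G(\cdot,c^*_R)$ is not identically zero on its complex domain, as witnessed by the quadratic lower bound in Proposition~\ref{prop:monotonicity_wrt_R}. For the parity, note that by definition of $c^*_R$,
$$
G(\alpha,c^*_R) = \alpha\left(\frac{-\Lambda_R(\alpha)}{\alpha} - c^*_R\right)\geq 0\qquad \text{for all real }\alpha>0,
$$
so the real-analytic function $G(\cdot,c^*_R)$ cannot change sign at $\alpha^*_R$, which forces $k$ to be even.

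Next, fix $r>0$ small enough so that $\overline{D(\alpha^*_R,r)}$ lies in the domain of holomorphy of $\Lambda_R$ and contains no zero of $G(\cdot,c^*_R)$ other than $\alpha^*_R$; in particular $\eta:= \min_{|\alpha-\alpha^*_R|=r}|G(\alpha,c^*_R)|>0$. Since $|G(\alpha,c)-G(\alpha,c^*_R)| = |c^*_R-c||\alpha|$, there exists $\delta>0$ such that this difference is less than $\eta$ on the circle $|\alpha-\alpha^*_R|=r$ for every $c \in [c^*_R - \delta, c^*_R]$. Rouché's theorem then produces exactly $k$ zeros of $G(\cdot,c)$ inside $D(\alpha^*_R,r)$, counted with multiplicity.

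Finally, for $c<c^*_R$ and any real $\alpha>0$,
$$
G(\alpha,c) = G(\alpha,c^*_R) + (c^*_R-c)\alpha > 0,
$$
so $G(\cdot,c)$ has no real positive zero, and in particular none inside $D(\alpha^*_R,r)$. The $k\geq 2$ zeros supplied by Rouché are therefore all non-real, and any one of them yields the required $\alpha(c) \in \mathbb{C}\setminus\mathbb{R}$ with $|\alpha(c)-\alpha^*_R|\leq r$. The only non-mechanical point is the parity claim for the order of vanishing, forced by the minimality of $\alpha^*_R$; everything else is the standard perturbative picture of an even-order real root splitting into complex conjugate pairs off the real axis.
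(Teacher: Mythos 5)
Your proof is correct and is exactly the Rouch\'e argument the paper alludes to but omits. You correctly identify that $G(\cdot,c^*_R)$ has a zero of even finite order $k\geq 2$ at $\alpha^*_R$ (the evenness forced by $G(\alpha,c^*_R)\geq 0$ on the real positive axis from the minimality of $c^*_R$, and finiteness from the quadratic growth in Proposition~\ref{prop:monotonicity_wrt_R}), apply Rouch\'e on a small circle, and rule out real zeros for $c<c^*_R$ by the strict positivity $G(\alpha,c)=G(\alpha,c^*_R)+(c^*_R-c)\alpha>0$; nothing is missing.
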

Note that the existence and uniqueness of the positive solution of $-\Lambda_R (\alpha) = \alpha c^*_R$ follows from the definition of $c^*_R$, as well as from the concavity and analyticity of $\alpha \mapsto -\Lambda_R (\alpha)$. Then the conclusion of the lemma is an easy consequence of Rouch\'e's theorem and we omit the proof.\\

% \end{proof}
We can now construct solutions of the linearized problem in some moving sets with speed $c < c^*_R$:
\begin{prop}
For all $c \in [c^*_R - \delta ,c^*_R)$, there exist real valued functions $u_1 (t,x)$, $v_1 (t,x,y)$ and $h>0$ such that $(u_1,v_1)$ is a solution of \eqref{eq:model_lin}  in the moving set
$$\{ (t,x,y) \ : \quad t >0 , \ |x+ct | \leq h \mbox{ and } y \in (0,R) \},$$
and satisfies
\begin{equation}\label{cond_12}
\inf_{t >0} u_1 (t,-ct) >0 \ , \qquad \inf_{t>0} v_1 (t,-ct,R/2) >0,
\end{equation}
and
\begin{equation}\label{cond_11}
\left\{
\begin{array}{ll}
u_1 (t,x) \leq 0 & \mbox{ if } x =-ct \pm h , \vspace{3pt}\\
v_1 (t,x,y) \leq 0 & \mbox{ if either } x =-ct \pm h \mbox{ or } y =R .
\end{array}
\right.
\end{equation}
\end{prop}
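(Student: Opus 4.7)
The plan is to construct $(u_1, v_1)$ as the real part of a complex exponential solution of \eqref{eq:model_lin} that oscillates in the moving variable $\xi := x + ct$. By Lemma~\ref{lem:Rouche}, for $c \in [c_R^* - \delta, c_R^*)$ there is a complex $\alpha(c) = \alpha_1(c) + i\alpha_2(c)$ satisfying $-\Lambda_R(\alpha(c)) = c\,\alpha(c)$, with $\alpha_1(c) \to \alpha_R^*$ and $\alpha_2(c) \to 0$ as $c \uparrow c_R^*$; we may assume $\alpha_2(c) > 0$ (otherwise replace $\alpha(c)$ by its complex conjugate). Proposition~\ref{prop:analyticity} then provides complex $L$-periodic eigenfunctions $U_{\alpha(c),R}(x) =: A(x) + iB(x)$ and $V_{\alpha(c),R}(x,y) =: P(x,y) + iQ(x,y)$, depending continuously on $\alpha$ in the $C^{1,r}$ topology, with $A \to U_{\alpha_R^*,R}$, $P \to V_{\alpha_R^*,R}$ and $B, Q \to 0$ uniformly. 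The complex-valued function $e^{\alpha(c)\xi}(U_{\alpha(c),R}(x), V_{\alpha(c),R}(x,y))$ solves \eqref{eq:model_lin} on $\R \times (0,R)$ (with the exchange condition at $y=0$ inherited from $E(U_{\alpha(c),R}, V_{\alpha(c),R}) = 0$ and the Dirichlet condition at $y=R$ built into \eqref{eq:truncated_eigen}); since the coefficients of \eqref{eq:model_lin} are real, its real part
\begin{align*}
u_1(t,x) &= e^{\alpha_1 \xi}\bigl[A(x)\cos(\alpha_2 \xi) - B(x)\sin(\alpha_2 \xi)\bigr],\\
v_1(t,x,y) &= e^{\alpha_1 \xi}\bigl[P(x,y)\cos(\alpha_2 \xi) - Q(x,y)\sin(\alpha_2 \xi)\bigr]
\end{align*}
solves the same system.

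I would then set $h := h(c) = \tfrac{3\pi}{4\alpha_2(c)}$, so that $\cos(\alpha_2 h) = -\sin(\alpha_2 h) = -\tfrac{\sqrt{2}}{2}$. The Dirichlet condition $V_{\alpha(c),R}|_{y=R} = 0$ gives $v_1(t,x,R) = 0$ for free. At $\xi = \pm h$, factoring out the negative scalar $-\tfrac{\sqrt{2}}{2}e^{\pm \alpha_1 h}$ shows that the non-positivity conditions in~\eqref{cond_11} reduce to uniform inequalities of the form $A(x) \pm B(x) \geq 0$ on $\mathbb{T}$ and $P(x,y) \pm Q(x,y) \geq 0$ on $\overline{\Omega_R}$. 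At $\xi = 0$ we have $u_1(t, -ct) = A(-ct)$ and $v_1(t,-ct,R/2) = P(-ct, R/2)$; by $L$-periodicity in $x$ and the uniform convergences $A \to U_{\alpha_R^*,R} > 0$ on $\mathbb{T}$ and $P(\cdot, R/2) \to V_{\alpha_R^*,R}(\cdot, R/2) > 0$ (here $R/2$ is safely away from the Dirichlet boundary), the infima in~\eqref{cond_12} are strictly positive provided $\delta$ is taken small enough.

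The subtle point is verifying $P \pm Q \geq 0$ on $\overline{\Omega_R}$ near $y = R$, where the dominant real part $P$ itself vanishes. Here I would apply Hopf's lemma to the positive real limit $V_{\alpha_R^*,R}$ at $\{y=R\}$: it yields $\partial_y V_{\alpha_R^*,R}(x, R) < 0$ uniformly in $x \in \mathbb{T}$, hence a uniform lower bound $P(x,y) \geq c_0 (R - y)$ for $c$ close to $c_R^*$, with some $c_0 > 0$. Meanwhile, $Q|_{y=R} = 0$ and $\partial_y Q|_{y=R}$ tends uniformly to $0$ as $c \uparrow c_R^*$ by the $C^{1,r}$-continuity of the eigenfunctions in $\alpha$, giving $|Q(x,y)| = o(1)(R-y)$ uniformly near $y=R$. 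Therefore $P \pm Q$ retains the sign of $P$ throughout $\overline{\Omega_R}$ once $\delta$ is shrunk, and a similar (easier) argument gives $|B| \leq A$ uniformly on $\mathbb{T}$. This completes the verification and the construction.
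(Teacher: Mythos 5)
Your construction follows the same route as the paper: obtain a non-real $\alpha(c)$ from Lemma~\ref{lem:Rouche}, lift to complex eigenfunctions via Proposition~\ref{prop:analyticity}, take real parts to get a real solution of \eqref{eq:model_lin}, and control signs on the moving strip via $C^{1,r}$-continuity plus Hopf's lemma. The one genuine divergence is the choice of half-width $h$. The paper takes $h = \pi/|\im\alpha|$, so that the phase factor $e^{i\,\im\alpha\cdot(\pm h)}$ equals exactly $-1$; then $u_1(t,-ct\pm h) = -e^{\pm\rho h}\,\re U_R$ and $v_1(t,-ct\pm h,y) = -e^{\pm\rho h}\,\re V_R$, so \eqref{cond_11} reduces to $\re U_R>0$ and $\re V_R\geq 0$ alone — the imaginary parts $B,Q$ drop out entirely at $\xi=\pm h$. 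Your choice $h=\tfrac{3\pi}{4\alpha_2}$ leaves a non-real phase, forcing you to verify the stronger pointwise bounds $|B|\leq A$ on $\mathbb{T}$ and $|Q|\leq P$ on $\overline{\Omega_R}$. You do establish these correctly — the compactness of $\mathbb{T}$ takes care of $|B|\leq A$, while near $y=R$ the combination $Q|_{y=R}=0$ and $\|\partial_y Q\|_\infty\to 0$ gives $|Q|=o(1)(R-y)$ against the Hopf-type lower bound $P\geq c_0(R-y)$ — but these are precisely the estimates the paper's $h$ renders unnecessary. The result is the same; the paper's normalization is merely the minimal one.
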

\begin{proof}
%By the KPP hypothesis, and more specifically the regularity of $f$ in a neighborhood of 0, it suffices to construct a solution of the linearized problem with $f'(0) - \delta$ instead of $f'(0)$, where $\delta$ is a small positive constant, and satisfying the condition \eqref{cond_11}. Indeed, one can then multiply it by some small enough constant and obtain a subsolution of the nonlinear problem. 
%
%Note that all our arguments above (more precisely...) could have also been conducted with $f'(0)-\delta$ instead of $f'(0)$. In particular, we can define the principal eigenvalue $\Lambda_{R,\delta} (\alpha)$ of the linearized problem with $f'(0) - \varepsilon$ instead of $f'(0)$, and one can easily check that it converges to $\Lambda_R (\alpha)$ as $\delta \to 0$. In particular, $c^*_{R,\delta} \nearrow c^*_R$ as $\delta \to 0$. Therefore, for any $c$ close enough to $c^*$, one cand find some small $\delta$ and large $R$ such that $c$ is close enough to $c^*_{R,\delta}$. 
%
%
%
%By the KPP hypothesis it suffices to construct such a subsolution for the linearized problem with $f'(0)-\delta<f'(0)$ replacing $f'(0)$. As in [BRR] all the arguments below would perturb for small $\delta>0$ so we simply write $f'(0)$.\\
Let $c \in [c^*_R - \delta , c^*_R)$ and, by Lemma~\ref{lem:Rouche} there exists a solution
$$
\alpha \in \mathbb{C} \setminus\mathbb{R}:\qquad -\Lambda_R(\alpha)=\alpha c.
$$
By Proposition~\ref{prop:analyticity}, there is a corresponding complex-valued eigenfunction pair $(U_R(x),V_R(x,y))$ of \eqref{eq:truncated_eigen}. By construction the functions
$$
(u_1 (t,x),v_1(t,x,y))=\re\left(e^{\alpha(x+ct)}\left(U_R(x),V_R(x,y)\right)\right)
$$
are real valued solutions to \eqref{eq:model_lin} on the truncated domain $\{(t,x,y) : \ 0 \leq y \leq R\}$, together with the Dirichlet boundary condition $V_R (\cdot,y=R) \equiv 0.$

Let us check that conditions \eqref{cond_12} and \eqref{cond_11} are satisfied. Note first that, as $c \to c^*_R$, then $\alpha \to \alpha^*_R$ and, by the continuity of $(U_{\alpha,R},V_{\alpha,R})$ with respect to $\alpha$ (see Proposition~\ref{prop:analyticity}), we can assume up to reducing $\delta$ and without loss of generality that
\begin{equation}
\label{subsol_claim1}\min_{x \in \mathbb{T}} \re (U_R (x)) > \frac{1}{2} \min_{x \in \mathbb{T}} U^*_R(x) >0 \, , \qquad \min_{x \in \mathbb{T} , 0  \leq y \leq R} \re (V_R (x,y)) \geq 0,
\end{equation}
where $(U^*_R ,V^*_R)$ denotes the (normalized) principal eigenfunction pair of~\eqref{eq:truncated_eigen} with $\alpha= \alpha^*_R$.

Indeed for $U_R$ the argument is straightforward, as well as for $V_R$ away from the upper boundary $y=R$. More precisely, it is clear that for any $0 < \varepsilon <R$, then we can decrease $\delta$ so that $\alpha$ is sufficiently close to $\alpha^*_R$ and
$$
\min_{x \in \mathbb{T} , 0 \leq y \leq R-\varepsilon} \re (V_R (x,y)) > \frac{1}{2} \min_{x \in \mathbb{T} , 0 \leq y \leq R-\varepsilon} V^*_R (x,y) > 0.
$$
In particular, \eqref{cond_12} is satisfied.

In order to deal with $y\geq R-\varepsilon$ in \eqref{subsol_claim1}, recall that the real valued function $V^*_R>0$ attains its minimum at any boundary point $y=R$, hence by Hopf lemma and periodicity, $\partial_y V^*_R|_{y=R}\leq -c_0<0$. Using again Proposition~\ref{prop:analyticity} and in particular the continuity with respect to $\alpha$ in the $C^{1,r}(\overline{\Omega_R})$ topology (up to $y=R$), we see that there holds $ \partial_y \re (V_R)|_{y=R}\leq -c_0/2<0$ up to reducing $\delta$ again. Since $V_R|_{y=R}=0$ we get $\re (V_R)>0$ for $y\in [R-\eps,R)$, whence \eqref{subsol_claim1}.

It then immediately follows that, letting $\rho=\re \, \alpha >0$, $\omega=\im  \, \alpha\neq 0$, and $h=\pi/|\omega|$:
\begin{eqnarray*}
\left( u_1 (t,-ct \pm h), v_1 (t,-ct \pm h,y) \right)&=& e^{\pm\rho h}\re\left(e^{\pm i\pi}\left(U_R(-ct\pm h),V_R(-ct\pm h,y)\right)\right)\\
& \leq  & (0,0).
\end{eqnarray*}
Recalling that $v_1$ satisfies the Dirichlet boundary condition at $y=R$, this ends the proof of the proposition.
\end{proof}
It is straightforward to check, by the uniqueness of the principal eigenvalue, that the function $\Lambda_R (\alpha)$ depends continuously on the parameter $f'(0)$, in the $L^\infty_{loc} ([0,+\infty ))$ topology. Thus, the speed $c^*_R$ also depends continuously on the parameter $f'(0)$. In particular, for any small $\varepsilon >0$, the proposition above still holds true if $c^*_R$ is replaced by some $c^*_{R,\varepsilon}$ which converges to $c^*_R$ as $\varepsilon \to 0$, and $f'(0)$ is replaced by $f'(0) - \varepsilon$ in~\eqref{eq:model_lin}.

Denoting by $u_{1,\varepsilon}$ and $v_{1,\varepsilon}$ the functions given by the above proposition applied to this perturbed problem, let then for any $t>0$, $|x+ct| \leq h$ and $0 \leq y \leq R$:
$$
\underline{u}(t,x)=
\max (u_{1,\eps}(t,x),0)	
 \ , \qquad
\underline{v}(t,x,y)=
\max (v_{1,\eps} (t,x,y),0)	.
$$
Then, by the KPP assumption and more specifically the regularity of $f$ in a neighborhood of 0, the pair $(\underline{u},\underline{v})$ is, up to multiplication by some small constant, a generalized subsolution of the original nonlinear problem \eqref{eq:model} in a compactly supported set moving with speed 
$c \in [c^*_{R,\varepsilon} - \delta , c^*_{R,\varepsilon})$. Moreover, choosing $R$ large and $\varepsilon$ small enough, the speed~$c$ can clearly be chosen arbitrarily close to $c^* (D)$.

More precisely, we have proved the following proposition:
\begin{prop}\label{prop:subsol_nonlinear}
There exist $c \in [0,c^*)$ arbitrarily close to $c^*$, and a pair $(\underline{u},\underline{v})$ such that, for any $0 \leq \kappa \leq 1$, $\kappa (\underline{u},\underline{v})$ is a continuous, non-negative and bounded (generalized) subsolution of \eqref{eq:model} in a domain $E \times F$ where 
$$
E:=\{ (t,x) \ : \quad -ct-h < x < -ct+h\}\subset (0,\infty)\times \R,
$$
$$F:= E \times (0,R).
$$
Moreover, $\underline{u} (t,x+ct)$ and $\underline{v} (t,x+ct,y)$ are $\frac{L}{c}$-periodic with respect to time, and satisfy that\begin{equation}\label{cond_12b}
\inf_{t >0} \underline{u} (t,-ct) >0 \ , \qquad \inf_{t>0} \underline{v} (t,-ct,R/2) >0,
\end{equation}
and
\begin{equation}\label{cond_11b}
\left\{
\begin{array}{ll}
\underline{u} (t,x) = 0 & \mbox{ if } x =-ct \pm h , \vspace{3pt}\\
\underline{v} (t,x,y) = 0 & \mbox{ if either } x =-ct \pm h \mbox{ or } y =R .
\end{array}
\right.
\end{equation} 
\end{prop}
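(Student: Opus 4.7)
The plan is to assemble the proposition by taking the positive part of the real-valued wave constructed in the preceding proposition, rescaling its amplitude so that the KPP upgrade from linear to nonlinear works uniformly in $\kappa\in[0,1]$, and then certifying the subsolution property through the generalized comparison machinery of Proposition~\ref{prop:comparison2}. First, using the uniqueness of $\Lambda_R$ and standard perturbation theory, the truncated critical speed $c^*_{R,\varepsilon}$ obtained after replacing $f'(0)$ by $f'(0)-\varepsilon$ depends continuously on $\varepsilon$ and tends to $c^*_R$ as $\varepsilon\to 0^+$; combining this with the fact that $c^*_R\to c^*(D)$ as $R\to\infty$ (which follows from~\eqref{eigen_def} together with the uniform bounds~\eqref{eigen_bounds_whole} needed to localize the minimum in $\alpha$), I would choose $R$ large and $\varepsilon$ small so that the range $[c^*_{R,\varepsilon}-\delta,c^*_{R,\varepsilon})$ contains speeds $c$ arbitrarily close to $c^*(D)$. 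For each such $c$, the preceding proposition yields real-valued $(u_{1,\varepsilon},v_{1,\varepsilon})$ solving the perturbed linearization of~\eqref{eq:model} in the moving strip $\{|x+ct|\leq h\}\times[0,R]$ and satisfying~\eqref{cond_12}--\eqref{cond_11}.

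Next I would set $\underline u(t,x):=\max(u_{1,\varepsilon}(t,x),0)$ and $\underline v(t,x,y):=\max(v_{1,\varepsilon}(t,x,y),0)$ inside the moving strip, extended by zero outside. By~\eqref{cond_11} these truncations vanish continuously at $x=-ct\pm h$ and at $y=R$, which gives~\eqref{cond_11b} and ensures continuous global pairing with the trivial subsolution $(0,0)$, while the strict positivity~\eqref{cond_12} is preserved at the center line $x=-ct$, whence~\eqref{cond_12b}. The road equation and the Robin exchange condition are linear and therefore pass through both the scaling and the positive-part truncation unchanged; only the field equation requires an additional nonlinear argument. The KPP assumption~\eqref{hyp:KPP} together with the $C^{1,r}$ regularity of $f$ near $0$ yields some $\eta_\varepsilon>0$ such that $f(s)\geq (f'(0)-\varepsilon)s$ for every $s\in[0,\eta_\varepsilon]$. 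Normalizing the eigenfunctions so that $\|u_{1,\varepsilon}\|_\infty+\|v_{1,\varepsilon}\|_\infty\leq\eta_\varepsilon$, I have, for every $\kappa\in[0,1]$ and at every point where $v_{1,\varepsilon}>0$, the upgrade $f(\kappa v_{1,\varepsilon})\geq(f'(0)-\varepsilon)\kappa v_{1,\varepsilon}$, which turns the linear field inequality satisfied by $\kappa v_{1,\varepsilon}$ into the genuine nonlinear one. Proposition~\ref{prop:comparison2}, applied with $(u_1,v_1):=(\kappa u_{1,\varepsilon},\kappa v_{1,\varepsilon})$ on the support and the trivial subsolution $(u_2,v_2)=(0,0)$ on the complement, then identifies $(\kappa\underline u,\kappa\underline v)$ as a generalized subsolution: the two implications required by that proposition are trivially verified because $\underline u$ and $\underline v$ vanish jointly at the lateral edges of the strip, so the Robin exchange flux on $\{y=0\}$ is compatible across the matching interfaces.

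The time-periodicity in the moving frame is then immediate from the ansatz: writing $(u_{1,\varepsilon},v_{1,\varepsilon})=\re\bigl(e^{\alpha(x+ct)}(U_R(x),V_R(x,y))\bigr)$ with $U_R,V_R$ being $L$-periodic in $x$, the moving-frame functions $(t,\xi)\mapsto u_{1,\varepsilon}(t,\xi-ct)=\re\bigl(e^{\alpha\xi}U_R(\xi-ct)\bigr)$ and analogously for $v_{1,\varepsilon}$ depend on $t$ only through the $L$-periodic factors $U_R(\xi-ct)$ and $V_R(\xi-ct,y)$, giving $L/c$-periodicity in $t$, a property obviously preserved by taking positive parts and by the scalar multiplication by $\kappa$. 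The main obstacle will be the verification of the subsolution property in the second step: because of the non-standard 1D--2D coupling through the Robin condition at $y=0$, the usual pointwise-maximum trick does not directly produce a subsolution of the coupled system, and the generalized Proposition~\ref{prop:comparison2} becomes indispensable; one must then argue carefully that the joint vanishing of $\underline u$ and $\underline v$ at the lateral boundaries of the moving strip, built into~\eqref{cond_11} precisely for this purpose, entails the sign compatibilities required by that proposition.
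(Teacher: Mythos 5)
Your proof is correct and takes essentially the same route as the paper: perturb $f'(0)$ to $f'(0)-\varepsilon$, apply the preceding proposition to obtain real-valued oscillating solutions of the perturbed linear problem, take positive parts inside the moving strip, and invoke the KPP assumption together with Proposition~\ref{prop:comparison2}. You spell out the uniformity in $\kappa$ via the $\eta_\varepsilon$-normalization and the monotonicity of $f(s)/s$ more explicitly than the paper's terse ``up to multiplication by some small constant,'' which is a welcome clarification rather than a departure.
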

Time periodicity and conditions \eqref{cond_12b}, \eqref{cond_11b} are immediate consequences of the construction of $\underline{u}$, $\underline{v}$ above. While \eqref{cond_12b} ensures that the subsolution is not trivial, \eqref{cond_11b} is required in order to apply a comparison principle (after extending $(\underline{u},\underline{v})$ by $(0,0)$ outside of the strip $|x+ct|\leq h$).
%
%%%%%%%%%%%%%%%%%%%%%%%%%%%%%%%%%%%%%%%%%%%%%%%%%%%%%%%%%%%%%%%%%%%%%%%%%%%%%%%%%%%%%
\subsection{Proof of the inner spreading theorem}\label{sec:inner}
We can finally prove the inner spreading part of Theorem~\ref{main:spread1}. Recall that $(U,V)$ denotes the unique positive and bounded stationary solution of \eqref{eq:model}. Let us first prove the following two lemmas:
\begin{lem}\label{lem:above_spread}
Let $(u,v)$ be a solution of \eqref{eq:model} with bounded initial data $(u_0,v_0)$. Then
$$\limsup_{t \to \infty} \ \sup_{x \in \R} \  u (t,x) - U (x) \leq 0,$$
$$\limsup_{t \to \infty} \sup_{x \in \R, y \geq 0 } v(t,x,y) - V(x,y) \leq 0. $$
\end{lem}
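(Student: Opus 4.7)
My plan is to dominate $(u,v)$ by the time-evolution of a large $L$-periodic stationary supersolution, prove monotone convergence of that evolution to $(U,V)$, and then upgrade the resulting locally uniform convergence to the required uniform convergence via a compactness/Liouville argument that handles the tail $y\to+\infty$.

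I would begin by choosing the stationary supersolution $(\overline U,\overline V)$ from \eqref{supersol442}, with $K$ large enough that $(\overline U,\overline V)\geq\max\{(u_0,v_0),(U,V)\}$; note that it is $L$-periodic in $x$. Let $(\tilde u,\tilde v)$ solve \eqref{eq:model} with initial datum $(\overline U,\overline V)$. Proposition~\ref{prop:comparison1} gives $(u,v)\leq(\tilde u,\tilde v)$ and $(\tilde u,\tilde v)\geq(U,V)$ for all $t\geq 0$, so it suffices to bound $(\tilde u,\tilde v)$ from above by $(U,V)$ asymptotically. Since $(\overline U,\overline V)$ is a stationary supersolution, $(\tilde u,\tilde v)(\delta)\leq(\overline U,\overline V)$ for every $\delta>0$, and comparison applied from time $t=\delta$ shows that $t\mapsto(\tilde u(t),\tilde v(t))$ is non-increasing. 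Uniqueness for the Cauchy problem, together with $L$-periodicity of coefficients and data, keeps $(\tilde u,\tilde v)$ $L$-periodic in $x$. Monotonicity and standard parabolic estimates then produce a $C^2_{loc}$ limit $(U_\infty,V_\infty)$ which is a periodic, positive, bounded stationary solution sandwiched between $(U,V)$ and $(\overline U,\overline V)$; Theorem~\ref{main:liouville} identifies it as $(U,V)$. Using $L$-periodicity in $x$, the locally uniform convergence $(\tilde u,\tilde v)\to(U,V)$ becomes uniform in $x\in\R$ on every bounded strip $\R\times[0,R]$.

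The main obstacle will be upgrading this to uniform convergence as $y\to+\infty$. I would handle it by contradiction: suppose the conclusion for $v$ fails, so there exist $\eps>0$ and sequences $t_n\to\infty$, $x_n\in\R$, $y_n\geq 0$ with $v(t_n,x_n,y_n)-V(x_n,y_n)\geq\eps$. Using $L$-periodicity of $V$ in $x$ and of $(\tilde u,\tilde v)$ in $x$ I may assume $x_n\in[0,L)$ and $x_n\to\xi_\infty$, and the strip estimate just obtained rules out bounded $y_n$, so $y_n\to+\infty$. Defining the blow-up $\hat v_n(t,x,y):=\tilde v(t+t_n,\,x+\xi_n,\,y+y_n)$, the lower boundary $\{y=0\}$ recedes to $-\infty$, so by parabolic estimates $\hat v_n$ converges up to a subsequence, locally uniformly, to a bounded entire solution $\hat v_\infty$ of the pure KPP equation $\partial_t w-d\Delta w=f(w)$ on $\R\times\R^2$. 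By construction $\hat v_\infty(0,0,0)\geq\lim V(x_n,y_n)+\eps=1+\eps$, using $V(x,+\infty)\equiv 1$ from Theorem~\ref{main:liouville}. On the other hand, the classical backward-in-time comparison with the spatially constant ODE supersolution $\bar v'=f(\bar v)$ forces any bounded entire KPP solution to satisfy $\hat v_\infty\leq 1$ everywhere, contradiction. The analogous (and simpler) argument on the road, without any $y$-direction, handles the $u$-component.
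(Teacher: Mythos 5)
Your argument follows the same backbone as the paper's: dominate $(u,v)$ by the forward evolution of a periodic, bounded, uniformly positive stationary supersolution, use monotonicity in time and parabolic estimates to extract a stationary limit, and invoke the Liouville uniqueness of Theorem~\ref{main:liouville} to identify that limit with $(U,V)$. The only cosmetic difference in the set-up is the choice of supersolution: the paper takes $\gamma(U,V)$ with $\gamma>1$ (which is a supersolution by the KPP hypothesis and trivially dominates $(u_0,v_0)$ when $\gamma$ is large, since $(U,V)$ has positive infimum), whereas you take $(\overline U,\overline V)$ from \eqref{supersol442} with a large $K$; both work equally well.

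Where you genuinely add something is in upgrading the locally uniform, monotone convergence $(\tilde u,\tilde v)\to(U,V)$ to the full $\sup_{x\in\R,\,y\geq 0}$ bound required by the statement. Periodicity in $x$ handles the supremum in $x$ immediately (and that is the only ingredient needed for $u$, so ``analogous and simpler'' is accurate), but the supremum over $y\geq 0$ does not follow from Dini on compact strips alone, since the supersolution does not flatten as $y\to\infty$. Your blow-up argument at a putative sequence $y_n\to+\infty$, together with the observation that any bounded entire solution of $\partial_t w - d\Delta w = f(w)$ on $\R\times\R^2$ must lie below $1$ by comparison with the ODE $\bar v'=f(\bar v)$ started arbitrarily far in the past, correctly rules this out using $V(\cdot,+\infty)\equiv 1$. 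The paper's proof states ``applying again Proposition~\ref{prop:comparison1} we get the wanted conclusion'' without spelling out this last step, so your version is more explicit and rigorous on precisely the point that deserves care. (Minor typo: in the blow-up you write $\xi_n$ where $x_n$ is clearly intended.)
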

\begin{proof}
We begin by noting that, thanks to the KPP hypothesis, the pair $\gamma (U,V)$ is a supersolution of \eqref{eq:model} for any real number $\gamma >1$. In particular, the solution of the Cauchy problem \eqref{eq:model} with initial data $(\gamma U , \gamma V)$ is non-increasing with respect to time. Moreover, by Proposition~\ref{prop:comparison1}, it is bounded from below by the stationary solution $(U,V)$. Therefore, it must converge as $t \to +\infty$ to a bounded positive stationary solution, which must be $(U,V)$ itself. 

On the other hand, as $u_0$ and $v_0$ are bounded and $U$, $V$ have positive infimum, there exists some $\gamma >1$ such that $(u_0 ,v_0) < \gamma (U,V)$. Applying again Proposition~\ref{prop:comparison1}, we get the wanted conclusion.
\end{proof}

\begin{lem}\label{lem:local_spread}
Let $(u,v)$ be a solution of \eqref{eq:model} with bounded, non-negative and non-trivial initial data $(u_0,v_0)$. Then $(u,v)$ converges locally uniformly to $(U,V)$ as $t \to +\infty$.
\end{lem}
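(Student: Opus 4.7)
The plan is to sandwich $(u,v)$ between an upper and a lower bound that both converge to $(U,V)$, exploiting the KPP structure together with the Liouville-type Theorem~\ref{main:liouville}. The upper estimate is already contained in Lemma~\ref{lem:above_spread}, which yields
\begin{equation*}
\limsup_{t\to+\infty}\bigl(u(t,x)-U(x)\bigr)\leq 0,\quad \limsup_{t\to+\infty}\bigl(v(t,x,y)-V(x,y)\bigr)\leq 0
\end{equation*}
uniformly in $(x,y)\in\R\times[0,+\infty)$, so it only remains to produce a matching locally uniform lower bound.

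To that end, I would use the compactly supported stationary subsolution $(\underline U_R,\underline V_R)$ of \eqref{eq:model_stationary} built in the proof of Lemma~\ref{lem:liouville_exist}, whose $v$-component is a small multiple of the first Dirichlet eigenfunction on a ball placed far from the road and whose $u$-component is identically zero. The strong maximum principle and the Hopf lemma (as embedded in Proposition~\ref{prop:comparison1}) applied at some fixed time $t_0>0$ give $v(t_0,\cdot,\cdot)>0$ up to the road; hence on the compact support of $\underline V_R$ one can find $\kappa\in(0,1]$ such that $(u(t_0,\cdot),v(t_0,\cdot,\cdot))\geq \kappa(\underline U_R,\underline V_R)$ pointwise on the whole domain (the right-hand side vanishes outside the ball). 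The usual KPP monotonicity of $f(v)/v$, already exploited in Section~\ref{sec:liouville42}, ensures that $\kappa(\underline U_R,\underline V_R)$ remains a stationary subsolution of \eqref{eq:model_stationary} for every $\kappa\in(0,1]$.

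I would then let $(\tilde u,\tilde v)$ denote the solution of \eqref{eq:model} with initial datum $\kappa(\underline U_R,\underline V_R)$. Since this initial datum is a stationary subsolution, Proposition~\ref{prop:comparison1} forces $(\tilde u,\tilde v)$ to be non-decreasing in time; combined with the bounded supersolution~\eqref{supersol442} the pair stays globally bounded, so by standard parabolic estimates (as in the proof of Theorem~\ref{thm:cauchy}) $(\tilde u,\tilde v)$ converges locally uniformly to some bounded, non-negative, non-trivial stationary solution $(U_\infty,V_\infty)$ of~\eqref{eq:model_stationary}. Theorem~\ref{main:liouville} immediately identifies $(U_\infty,V_\infty)\equiv(U,V)$. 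A second application of Proposition~\ref{prop:comparison1} gives $(u,v)(t+t_0,\cdot,\cdot)\geq(\tilde u,\tilde v)(t,\cdot,\cdot)$ for all $t\geq 0$, whence
\begin{equation*}
\liminf_{t\to+\infty}\bigl(u(t,x)-U(x),\,v(t,x,y)-V(x,y)\bigr)\geq (0,0)
\end{equation*}
locally uniformly in $(x,y)$, and combined with the upper bound this concludes the proof.

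The main obstacle, in my view, is more conceptual than technical: one must realize that local convergence reduces, via the upper bound of Lemma~\ref{lem:above_spread}, to producing a \emph{single} non-trivial stationary subsolution below $(u(t_0,\cdot,\cdot),v(t_0,\cdot,\cdot))$, since the monotone convergence argument then automatically yields the full positive stationary solution thanks to the Liouville uniqueness Theorem~\ref{main:liouville}. The compactly supported subsolution from Lemma~\ref{lem:liouville_exist} is tailor-made for this role, and once it is placed below the solution the remainder is a routine combination of comparison principles and parabolic regularity.
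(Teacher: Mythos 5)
Your proof is correct and follows essentially the same route as the paper: reduce to the lower bound via Lemma~\ref{lem:above_spread}, place a small multiple of the compactly supported stationary subsolution from Lemma~\ref{lem:liouville_exist} below $(u,v)$ at a positive time using the strong maximum principle, and invoke the monotone convergence of the corresponding Cauchy solution to the unique positive steady state $(U,V)$ together with Proposition~\ref{prop:comparison1}. Nothing to add.
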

\begin{proof}
The previous lemma already proved that the supremum limits of $u$ and $v$ lie below $U$ and $V$. Thus, we only have to prove that for any $K >0$, then
$$\liminf_{t \to \infty} \inf_{|x| \leq K , 0 \leq y \leq K} u (t,x) - U (x) \geq 0,$$
$$\liminf_{t \to \infty} \inf_{|x| \leq K , 0 \leq y \leq K } v(t,x,y) - V(x,y) \geq 0. $$
Recall that in a previous section, we have constructed arbitrarily small and compactly supported stationary subsolution pairs $(\underline{U} (x), \underline{V} (x,y))$ such that the associated solution of the Cauchy problem~\eqref{eq:model} converges locally uniformly to the unique bounded and positive stationary solution $(U,V)$ (see the proof of Lemma~\ref{lem:liouville_exist}). 

Moreover, as our system satisfies a strong parabolic maximum principle (see Proposition~\ref{prop:comparison1}), the solution $(u,v)$ is positive everywhere for any positive time and we can assume without loss of generality that 
$$u(t=1,\cdot) \geq \underline{U} (\cdot) , \quad v(t=1,\cdot) \geq \underline{V} (\cdot).$$
Applying again the comparison principle, the conclusion follows.
\end{proof}

We now go back to the proof of Theorem~\ref{main:spread1}. Thanks to the two lemmas above, it only remains to prove that, for any $0 < c < c^* (D)$ and $R>0$, then
$$\liminf_{t \to \infty} \inf_{-1- c^* (D) \geq x  \geq -ct} u (t,x) - U (x) \geq 0,$$
$$\liminf_{t \to \infty} \inf_{-1- c^* (D) \geq x \geq -ct , 0 \leq y < R } v(t,x,y) - V(x,y) \geq 0. $$
From now on, $c \in (0, c^* (D))$ is fixed. Then, let $c < c' < c^* (D)$ close to $c^* (D)$ so that,  by Proposition~\ref{prop:subsol_nonlinear}, there exists a moving subsolution $(\underline{u}, \underline{v})$ of \eqref{eq:model} with speed~$c'$. Again, Proposition~\ref{prop:comparison1} implies that, for any $t>0$, the solution $(u,v)$ is positive in the whole domain. Thus, up to multiplication of $(\underline{u}, \underline{v})$ by some small enough $\kappa >0$, we can assume without loss of generality that for any $x \in E$ and $y \in F$:
$$u(1,x) \geq \underline{u} (1,x) \ , \qquad v (1,x,y) \geq \underline{v} (1,x,y).$$
Furthermore, we also have that for any $(t,x) \in \partial E \cap ((1,\infty) \times \R )$,
$$u(t,x) \geq 0 = \underline{u} (t,x),$$
and for any $(t,x,y) \in \partial F \cap  ((1,\infty) \times \Omega) $
$$v(t,x) \geq 0 = \underline{v} (t,x,y).$$
Applying Proposition~\ref{prop:comparison2} (note that $\overline{F} \cap \{y=0\} = \overline{E}$), one may immediately conclude that for any $t \geq 1$ and $(x,y) \in F$:
$$u (t,x) \geq \underline{u} (t,x) \ , \qquad v(t,x) \geq \underline{v} (t,x).$$

Now proceed by contradiction and assume that there exist $\delta >0$ and some sequences $t_n \to +\infty$, $x_n \in [-1-c^* (D),-ct_n]$ and $y_n \in [0,R]$, such that either $u (t_n,x_n) < U(x_n) -\delta$ or $v(t_n,x_n,y_n) < V (x_n,y_n) - \delta$. Note that, by parabolic estimates~\cite{ LadyzhenskayaUraltseva}, the sequences $u(t_n+s,x_n+x)$ and $v(t_n+s,x_n+x,y)$ converge locally uniformly to a solution $(u_\infty,v_\infty)$ of \eqref{eq:model}, which by Lemma~\ref{lem:above_spread} lies below $(U,V)$. From the choice of $t_n$ and $x_n$ and a strong maximum principle argument, it is straightforward to check that $(u_\infty,v_\infty)$ even lies strictly below $(U,V)$. Thus, the sequence $x_n$ can be replaced by any sequence of points $x'_n$ such that $x'_n - x_n$ is bounded. In particular, we can assume without loss of generality that $x_n = k_n L$ with $k_n \in \mathbb{Z}$ for any $n \in \mathbb{N}$.

We can now let $t'_n = \frac{|x_n|}{c'} >1$. Then
$$u (t'_n,x_n+x) \geq \underline{u} (t'_n,x_n + x) = \underline{u} (0,x),$$
$$v (t'_n,x_n+x,y) \geq \underline{v} (t'_n,x_n + x,y) = \underline{v} (0,x,y),$$
where the equalities follow from the time periodicity of the subsolution in the moving frame with speed $c'$.

% \textcolor{red}{TO MODIFY, SEE BEFORE} In general, the pair $(\underline{u},\underline{v})$ does not satisfy the compatibility condition. In order to apply the comparison principle, we instead recall that $\underline{v}$ is continuous and satisfies
% $$\underline{v} (t'_0,-c' t'_0,R/2)>0.$$
% In particular, one can find some $\rho >0$ such that
% $$\inf_{(x,y) \in B_{\rho} (-c' t'_0,R/2)} \underline{v} (t'_0,x,y) =: \kappa >0,$$
% where $B_\rho$ denotes the ball of radius $\rho$ centered at 0, and $\chi$ the characteristic function.
% 
% By the inequalities above,
% $$v (t'_n,x_n+x,y) \geq \kappa \chi_{B_{\rho} (- c' t'_0,R/2)} (x,y)$$
% and, by the comparison principle,
% $$u (t_n, x_n) \geq \tilde{u} (t'_0 + t_n - t'_n , x_0),$$
% $$v (t_n, x_n,y_n) \geq \tilde{v} (t'_0 + t_n - t'_n , x_0,y_n),$$
% where $(\tilde{u},\tilde{v})$ is the solution of \eqref{eq:model} with initial data $(0,  \kappa \chi_{B_{\rho} (- c' t'_0,R/2)})$ (which satisfies the compatibility condition).

From Lemma~\ref{lem:local_spread}, the solution of \eqref{eq:model} with initial data $(\underline{u} (0,x), \underline{v} (0,x,y))$ converges locally 
uniformly to $(U,V)$. Therefore, applying the comparison principle and noting that $t_n - t'_n \to +\infty$ as $n \to +\infty$, 
we get that $(u_\infty,v_\infty) \geq (U,V)$ and reach a contradiction. This ends the proof of Theorem~\ref{main:spread1}.

\section{Speed enhancement by the road}\label{sec:accelerate}

We have now computed the spreading speed $c^* (D)$ of solutions of \eqref{eq:model} with compactly supported initial data, in the sense of Theorem~\ref{main:spread1}. In this section, we will compare $c^* ( D)$ with the spreading speed $c^*_{KPP}$ of solutions of the single KPP equation. In particular, we will prove Theorem~\ref{main:speedup} and show that the road accelerates the propagation (for compactly supported initial data) if and only if $D>2d$.

The proof will rely on Proposition~\ref{prop:eigen_enhance} below, which compares $\Lambda (\alpha)$ to the principal eigenvalue arising when looking for exponential solutions of the homogeneous KPP problem with no road. Furthermore, we will prove in the last subsection that $\Lambda (\alpha)$ also characterizes the spreading speed of solutions with exponentially decaying initial data. Therefore, Proposition~\ref{prop:eigen_enhance} also infers whether such solutions are accelerated by the road or not.

%
%%%%%%%%%%%%%%%%%%%%%%%%%%%%%%%%%%%%%%%%%%%%%%%%%%%%%%%%%%%%%%%%%%%%%%55
\subsection{Eigenvalue enhancement by the road}
Noting that 
$$c^*_{KPP} = \min_{\alpha >0} \frac{d\alpha^2 + f'(0)}{\alpha} = \frac{d\alpha_{KPP}^2 + f'(0)}{\alpha_{KPP}} $$
with $\alpha_{KPP} = \sqrt{\frac{f'(0)}{d}}$, the fact that $c^* (D) > c^*_{KPP}$ if and only if $D>2d$ is a simple corollary of the following proposition:
\begin{prop}\label{prop:eigen_enhance}~
\begin{enumerate}[$(i)$]
\item If $D \leq d$ then
$$
\forall \,  \alpha \geq 0:  \qquad  -\Lambda (\alpha) = d\alpha^2 + f'(0) .$$
\item If $D > d$ and $\alpha (D) = \sqrt{\frac{f'(0)}{D-d}}$ then
\begin{equation*}
\begin{array}{lcl}
\forall  \, 0\leq\alpha \leq \alpha (D): & \qquad & -\Lambda (\alpha) = d\alpha^2 + f'(0) , \vspace{3pt}\\
\forall \,  \alpha > \alpha (D): & \qquad & - \Lambda (\alpha) > d\alpha^2 + f'(0).
\end{array}
\end{equation*}
\end{enumerate}
\end{prop}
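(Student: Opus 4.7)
The lower bound $-\Lambda(\alpha)\geq d\alpha^2+f'(0)$ already follows from Proposition~\ref{prop:monotonicity_wrt_R} by letting $R\to+\infty$, so the task reduces to proving equality in the regime $\kappa:=(d-D)\alpha^2+f'(0)\geq 0$ and strict inequality when $\kappa<0$. The starting point is to analyse the structure of a hypothetical positive $L$-periodic eigenfunction pair $(U_\alpha,V_\alpha)$ at the value $\Lambda=-(d\alpha^2+f'(0))$. The field equation then reduces to the harmonic-type equation $-d\Delta V_\alpha-2d\alpha\partial_x V_\alpha=0$; averaging in $x\in\mathbb{T}$ forces $\bar V_\alpha(y):=\frac{1}{L}\int_0^L V_\alpha(x,y)\,dx$ to be affine in $y$, and a Fourier-in-$x$ decomposition (discarding the exponentially growing branches to match the at-most-linear $y$-growth of Theorem~\ref{th:eigenvalue}(i)) yields
\[
V_\alpha(x,y)=A_0+A_1y+w(x,y),\qquad A_0,A_1\in\R,
\]
where $w$ has zero $x$-mean for each $y$ and decays exponentially as $y\to+\infty$.

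Integrating both the road equation $L_1(U_\alpha,V_\alpha)=\Lambda U_\alpha$ and the boundary condition $E(U_\alpha,V_\alpha)=0$ over one period, the contributions of $w$ cancel out and a short computation yields the crucial algebraic identity
\[
dA_1=\kappa\,\bar U_\alpha,\qquad \bar U_\alpha:=\frac{1}{L}\int_0^L U_\alpha(x)\,dx>0.
\]
In the strict regime $\kappa<0$ this forces $A_1<0$, whence $V_\alpha(x,y)\to-\infty$ as $y\to+\infty$, contradicting positivity. Hence $-\Lambda(\alpha)\neq d\alpha^2+f'(0)$, and combined with the lower bound this yields the strict inequality $-\Lambda(\alpha)>d\alpha^2+f'(0)$.

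In the equality regime $\kappa\geq 0$ I would conversely construct such an eigenfunction explicitly. Normalizing $A_0=1$ and $\bar U_\alpha=1$ (so $A_1=\kappa/d\geq 0$), and introducing the Dirichlet-to-Neumann operator $T$ associated to $-d\Delta-2d\alpha\partial_x$ on the half-plane (acting on mean-zero periodic traces), the boundary condition and the road ODE combine into a single equation $(\nu+dT+d\mu S)\psi=\mu+\kappa-\nu$ for the trace $\psi:=w(\cdot,0)$, where $S=\mathcal{L}_0^{-1}T$ and $\mathcal{L}_0=-D\partial_x^2-2D\alpha\partial_x+\kappa$ is invertible on mean-zero periodic functions since its principal eigenvalue is $\kappa\geq 0$ (strictly so on the mean-zero subspace). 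The left-hand operator is coercive by Fredholm theory thanks to the positive semi-definiteness of $T$ and $S$ and the non-negativity of $\mu,\nu$, giving a unique $\psi$; back-substitution yields $U_\alpha$ and, after scaling $w$ down so that the constant term $A_0$ dominates, a positive pair $(U_\alpha,V_\alpha)$ solving \eqref{eq:principal_eigenvalue}. Theorem~\ref{th:eigenvalue}(ii) then gives $\Lambda(\alpha)\geq-(d\alpha^2+f'(0))$, closing the equality. The hardest step will be the positivity and invertibility analysis of this last construction when $\mu$ or $\nu$ vanish at some points, where the operator $\nu+dT+d\mu S$ may degenerate; I expect this to require a regularization $(\mu,\nu)\leadsto(\mu+\eps,\nu+\eps)$ followed by passage to the limit $\eps\to 0^+$.
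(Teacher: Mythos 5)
Your strict-inequality argument is essentially the contrapositive of the paper's, and it is sound: averaging the field equation over the torus at the hypothetical eigenvalue $\Lambda=-(d\alpha^2+f'(0))$ forces the mean $\bar V_\alpha$ to be affine, and integrating the road equation and boundary condition gives $dA_1=\kappa\,\bar U_\alpha$, which is negative when $\kappa<0$ and contradicts positivity. Combined with the bound $-\Lambda(\alpha)\geq d\alpha^2+f'(0)$ from Proposition~\ref{prop:monotonicity_wrt_R} and Theorem~\ref{th:eigenvalue}$(i)$, this proves the strict inequality.

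The equality case is where you diverge significantly, and unnecessarily. The paper never constructs a new eigenfunction at $\Lambda=-(d\alpha^2+f'(0))$: it simply takes the eigenfunction pair $(U_\alpha,V_\alpha)$ that Theorem~\ref{th:eigenvalue} already provides at $\Lambda=\Lambda(\alpha)$, sets $\omega=\sqrt{(-d\alpha^2-f'(0)-\Lambda(\alpha))/d}\geq 0$, averages to get $\Phi''-\omega^2\Phi=0$, uses the at-most-linear growth to deduce that $\Phi$ is either affine ($\omega=0$) or $Ce^{-\omega y}$ ($\omega>0$), and then reads off the sign of $\Phi'(0)$ from the averaged road and boundary equations $-d\Phi'(0)=[D\alpha^2+\Lambda(\alpha)]\int_{\mathbb T}U$. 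When $D\alpha^2\leq d\alpha^2+f'(0)$ this forces $\Phi'(0)\geq\omega^2\int U\geq 0$, incompatible with the exponential branch, so $\omega=0$. This is exactly the same bookkeeping you already did, applied to the genuine eigenfunctions rather than to a constructed candidate, and it settles both cases in a few lines.

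Your explicit construction route, by contrast, carries real gaps. The positive semi-definiteness of the Dirichlet-to-Neumann operator $T$ associated to $-d\Delta-2d\alpha\partial_x$ and of $S=\mathcal{L}_0^{-1}T$ is not transparent because these operators are not self-adjoint (the drift $-2d\alpha\partial_x$ destroys symmetry), so the coercivity you invoke does not come for free. More seriously, once you normalize $A_0=1$ and $\bar U_\alpha=1$, the trace $\psi$ and hence $w$ is \emph{determined} by the equation $(\nu+dT+d\mu S)\psi=\mu+\kappa-\nu$; there is no residual freedom to ``scale $w$ down so that $A_0$ dominates,'' and $V_\alpha=1+A_1y+w$ could a priori go negative near $y=0$. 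You also flag yourself that the degenerate case where $\mu$ or $\nu$ vanishes would need a regularization-and-limit argument, an extra layer that the direct averaging approach avoids entirely. I recommend replacing the construction by the paper's direct argument on the existing eigenfunctions.
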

From $(ii)$ it is now clear that the generalized principal eigenvalue $\Lambda(\alpha)$ in general cannot be analytical in $\alpha$, which contrasts with Proposition~\ref{prop:analyticity} in the case of truncated cylinders.

\begin{proof}
Recalling from \eqref{eigen_bounds_whole} that $-\Lambda (\alpha) \geq d\alpha^2 + f'(0)$ we have for all $\alpha\geq 0$ that
$$
\omega= \sqrt{\frac{-d\alpha^2 - f'(0) - \Lambda (\alpha)}{d}}\geq 0,
$$
and that our statement amounts to determine whether $\omega=0$ or $\omega>0$. Letting $U$ and $V$ be the positive eigenfunctions of \eqref{eq:principal_eigenvalue} from Theorem~\ref{th:eigenvalue}, integrating the field equation with respect to $x$, and denoting $\Phi (y) = \int_{x \in \mathbb{T}} V (x,y) dx$, it is easy to check as before that
$$
\Phi''(y)-\omega^2\Phi(y)=0.
$$
Since $V(x,y)>0$ grows at most linearly in $y$ (Theorem~\ref{th:eigenvalue}) this immediately implies that either $\Phi(y)=ay+b$ for some $a,b\geq 0$ if $\omega=0$, or $\Phi(y)=Ce^{-\omega y}$ for some $C>0$ if $\omega>0$.
Integrating now the boundary condition and the road equation, an explicit computation gives
\begin{equation}\label{eq:speedup}
-d\Phi'(0)=\int_{\mathbb{T}} \{\mu U-\nu V(\cdot ,0)\}\mathrm{d}x=[D\alpha^2+\Lambda(\alpha)]\int_{\mathbb{T}} U\mathrm{d}x.
\end{equation}
If either $D\leq d$, or $D>d$ and $0\leq \alpha\leq\alpha(D)$, then $D \alpha^2 \leq d\alpha^2 + f'(0)$. From the above equality, we get
$$- d \Phi ' (0) \leq  [ d \alpha^2 + f'(0)  +\Lambda (\alpha) ] \int_{\mathbb{T}} U = - d \omega^2 \int_{\mathbb{T}} U.$$
Thus $\Phi'(0)\geq \omega^2 \int_{\mathbb{T}} U \geq 0$. It follows that $\omega=0$ (otherwise $\Phi(y)$ would be an exponential with slope $-C\omega<0$ at $y=0$), which is exactly the desired conclusion in those two cases.

In the last case $D>d$ and $\alpha>\alpha(D)$, then $D \alpha^2 > d\alpha^2 + f'(0)$ which, together with \eqref{eq:speedup}, leads to
$$\Phi '(0) < \omega^2 \int_{\mathbb{T}} U.$$
As $\Phi' (0) \geq 0$ when $\omega = 0$, it follows that $\omega >0$ and the proof is achieved.
\end{proof}

%
%%%%%%%%%%%%%%%%%%%%%%%%%%%%%%%%%%%%%%%%%%%%%%%%%%%55
\subsection{Large diffusion limit}

Let us now end the proof of Theorem~\ref{main:speedup} by checking that
$$
0 < \liminf_{D \to \infty} \frac{ c^*(D)}{\sqrt{D}} \leq \limsup_{D \to \infty} \frac{ c^*(D)}{\sqrt{D}} < +\infty.
$$
In order to obtain the upper bound we take first $\alpha=\alpha(D)=\sqrt{\frac{f'(0)}{D-d}}$ for $D>d$, and compute explicitly
$$
c:=\frac{d\alpha^2+f'(0)}{\alpha}=D\sqrt{\frac{f'(0)}{D-d}}.
$$
Because we chose exactly $\alpha=\alpha(D)$ we are in case $(ii)$ of Proposition~\ref{prop:eigen_enhance} with $-\Lambda(\alpha)=d\alpha^2+f'(0)$, thus we just exhibited a solution $\alpha=\alpha(D)$ of $-\Lambda(\alpha)=\alpha c$ for this particular value of $c$. By definition of $c^* (D)$ this means that $c^*(D)\leq c$, and it immediately follows that
$$ \limsup_{D \to \infty} \frac{ c^*(D)}{\sqrt{D}}  \leq \sqrt{f'(0)} < +\infty .$$

Turning now to the lower bound, recall from \eqref{eigen_bounds_whole} that $-\Lambda(\alpha)\geq \max\{D\alpha^2-\lambda_\alpha,d\alpha^2+f'(0)\}$, where $\lambda_\alpha \in [\mu_0,\mu_1]$ is the principal eigenvalue of $-D\frac{d^2}{dx^2}-2D\alpha\frac{d}{dx}+\mu(x)$ on the torus. In particular we have
$$
-\Lambda(\alpha)\geq p(\alpha):=\max\{D\alpha^2-\mu_1,d\alpha^2+f'(0)\}.
$$
Studying the piecewise polynomial and convex function $p(\alpha)$ for fixed $D>d$ it is easy to check that $\min\limits_{\alpha > 0}\frac{p(\alpha)}{\alpha}\geq \frac{f'(0)}{\sqrt{f'(0)+\mu_1}}\sqrt{D-d}$. Since $c^*(D)=\min\limits_{\alpha > 0}\frac{-\Lambda(\alpha)}{\alpha}\geq \min\limits_{\alpha > 0}\frac{p(\alpha)}{\alpha}$, it immediately follows that
$$ \liminf_{D \to \infty} \frac{ c^*(D)}{\sqrt{D}}  \geq \frac{f'(0)}{\sqrt{f'(0)+\mu_1}}>0.$$
%
%%%%%%%%%%%%%%%%%%%%%%%%%%%%%%%%%%%%%%%%%%%%%%%%%%%%%%%%%%%%%%%%%%%%%%%%%%%%%%%%%%%%%%%%

\subsection{Exponentially decaying initial data}\label{sec:exp_case}

Let us now turn to the proof of Proposition~\ref{exp_decay_case}. By Proposition~\ref{prop:eigen_enhance}, we need to prove that $c (\alpha) = \frac{-\Lambda (\alpha)}{\alpha}$ is the spreading speed of solutions for initial data with exponential decay of order $e^{\alpha x}$. This should be natural by now since for any fixed $\alpha$ and by construction of $\Lambda (\alpha)$ (see again Theorem~\ref{th:eigenvalue}), $c (\alpha) $ is the smallest speed~$c$ such that there exists a positive solution of the linearized problem~\eqref{eq:model_lin} of the type~\eqref{eq:exponential_supersolutions}.

\begin{proof}[Proof of Proposition~\ref{exp_decay_case}]
Let us first check the upper estimate. Note that although $e^{\alpha (x +c(\alpha)t)} (U_\alpha (x), V_\alpha (x,y))$ is an obvious supersolution of~\eqref{eq:model}, the difficulty lies in the fact that $e^{\alpha x} V_\alpha (x,y)$ may decay as $y \to +\infty$ and, therefore, it may not lie above the initial datum $v_0$.

Choose any $c > c(\alpha)$ and  observe that $c \alpha > -\Lambda (\alpha)$ by definition of $c(\alpha)=-\frac{\Lambda(\alpha)}{\alpha}$. An explicit computation shows that
$$
(\overline{U}_1,\overline{V}_1) := e^{\alpha (x + c t)} (U_\alpha , V_\alpha)
$$
satisfies
$$\left\{ 
\begin{array}{ll}
\partial_t \overline{U}_1 - D \partial_x^2 \overline{U}_1 + \mu \overline{U}_1 - \nu \overline{V}_1|_{y=0} = (\Lambda (\alpha) + c \alpha) \overline{U}_1  , & t \geq 0, x \in \R,\vspace{3pt}\\
\partial_t \overline{V}_1  - d \Delta \overline{V}_1 - f' (0) \overline{V}_1 = (\Lambda (\alpha) + c \alpha) \overline{V}_1  , & t\geq 0, (x,y) \in \Omega ,\vspace{3pt}\\
-d \partial_y \overline{V}_1|_{y=0} = \mu \overline{U}_1 - \nu \overline{V}_1|_{y=0} ,  & t \geq 0 , x \in \R.
\end{array}
\right.
$$
Also defining
$$
(\overline{U}_2,\overline{V}_2) := (0, e^{\alpha (x + c t)} )
$$
and recalling that $-\Lambda (\alpha) \geq d\alpha^2 + f'(0)$, it is easy to check that
$$
\left\{
\begin{array}{ll}
%\partial_t \overline{U}_2 - D \partial_x^2 \overline{U}_2 + \mu \overline{U}_2 - \nu \overline{V}_2 (y=0) =- \nu e^{\alpha (x+c' t)} ,  & t \geq 0, x \in \R,\vspace{3pt}\\
\partial_t \overline{V}_2  - d \Delta \overline{V}_2 - f' (0) \overline{V}_2 \geq (\Lambda (\alpha) + c \alpha) \overline{V}_2  , & t\geq 0, (x,y) \in \Omega ,\vspace{3pt}\\
-d \partial_y \overline{V}_2|_{y=0} \geq \mu \overline{U}_2 - \nu \overline{V}_2|_{y=0}  , & t \geq 0 , x \in \R.
\end{array}
\right.
$$
Then, letting $M$ large enough so that 
$$
M(\Lambda (\alpha) + c\alpha) \min U_\alpha \geq \nu_1,
$$
it is straightforward to check that $M(\overline{U}_1,\overline{V}_1) + (\overline{U}_2,\overline{V}_2)$ is a supersolution of the linearized problem~\eqref{eq:model_lin} and is moving with speed $c>c(\alpha)$. Up to multiplication by some large positive constant and from our assumptions in Proposition~\ref{exp_decay_case}, it can be assumed to lie above $(u_0,v_0)$ at time $t=0$. It is then straightforward, applying Proposition~\ref{prop:comparison1} and noting that $c$ could be chosen arbitrarily close to $c(\alpha)$, to reach the desired conclusion.
\medskip

Let us now focus on the inner spreading estimate. Proceeding as in the proof of Theorem~\ref{main:spread1} (see Section~\ref{sec:inner}), it is enough to find a non-trivial subsolution lying below $(u_0,v_0)$ at time $t=0$ and moving with speed $c$ smaller but arbitrarily close to $c (\alpha)$. 
% 
% We begin by the easier case when either $D\leq d$, or $D>d$ and $\alpha \leq \sqrt{\frac{f'(0)}{D-d}}$, so that by Proposition~\ref{prop:eigen_enhance} $-\Lambda (\alpha) = d\alpha^2 + f'(0)$ and the expected speed 
% $c( \alpha)$ is the same as if there were no road. Then let any $c \in [0,c (\alpha))$ and $\varepsilon >0$ small enough so that we can define
% $$\beta := \sqrt{\frac{d\alpha^2 + f'(0)- \varepsilon - c \alpha}{d}}.$$
% Then $\sin (\beta (y-1) ) e^{\alpha (x+ct)}$ is a solution of the linearized field problem with $f'(0) -\varepsilon$ instead of $f'(0)$. Defining
% $$
% \underline{U}(x):=0,\qquad \underline{V}(x,y):=\left\{
% \begin{array}{ll}
% \sin (\beta (y-1) ) e^{\alpha (x+ct)} & \text{if } 1 \leq y \leq 1 + \frac{\pi}{\beta},\\
% 0 & \text{elsewhere},
% \end{array}
% \right. 
% $$
% we obtain, up to multiplication by a small enough positive constant, a subsolution moving with speed $c$ and lying below $(u_0,v_0)$. Note that the lower bound on $u_0$ is not even needed here. Furthermore, the lower bound on $v_0$ may only be assumed to hold for large $y$, and the argument would still apply to the letter up to shifting the subsolution in the $y$-direction.
% 
% We now consider the general case $0 < \alpha <\alpha^*$. 
Let any $R>0$ and define
$$
c_R (\alpha) = \frac{-\Lambda_R (\alpha)}{\alpha} \nearrow c (\alpha) \mbox{ as } R \to +\infty .
$$
In particular $c_R (\alpha) > c^*_R$ provided that $R$ is large enough, and, by concavity of $\Lambda_R (\alpha)$, one has
$$c_R (\alpha) (\alpha + \eta) > -\Lambda_R (\alpha + \eta)$$ for $\eta>0$ small enough. For simplicity we write below $c=c_R(\alpha)$,
% (here $\alpha>0$ is the exponential decay of the initial data for $x\to-\infty$, which is fixed once and for all). 
and define
$$
\underline{U}_1 =  A e^{\alpha (x+c t)} U_{\alpha,R} - B e^{(\alpha+\eta) (x+ c t)} U_{\alpha + \eta,R} ,
$$
$$
\underline{V}_1 = 
 A e^{\alpha (x+c  t)} V_{\alpha,R} - B e^{(\alpha+\eta) (x+c  t)} V_{\alpha + \eta,R}   \quad \text{if } 0 \leq y \leq R,
 $$
where $A$ and $B$ are positive constants to be chosen later.

One can check that for all $t \geq 0$ and $x \in \R$ there holds
\begin{align*}
&\partial_t \underline{U}_1 - D \partial_x^2 \underline{U}_1 + \mu \underline{U}_1 - \nu \underline{V}_1|_ {y=0} \vspace{3pt}\\
&\hspace{2cm}=A\cdot 0  -B \big[\Lambda_R  (\alpha + \eta) + ( \alpha+\eta)c \big] e^{(\alpha+\eta) (x + c t)} U_{\alpha+\eta,R}< 0,
\end{align*}
and
$$
-d \partial_y \underline{V}_1|_{y=0} - \mu \underline{U}_1 + \nu \underline{V}_1|_{y=0} = 0 = \underline{V}_1|_{y=R}.
$$
By $C^{1,r}$ regularity of $f$ there exist a small $s_0>0$ and some $\gamma >0$ such that 
$$
\forall v \in (-\infty, s_0 ]:\qquad  f (v) \geq f'(0) v - \max \{ 0 , \gamma v \}^{1+r}.
$$
Note that up to now $f(v)$ was only defined for $v \geq 0$, hence we may indeed assume without loss of generality that $f(v)=f'(0) v$ for $v \leq 0$ and, in particular, the above inequality holds. 

Choose first $A>0$ small so that $A V_{\alpha,R}\leq s_0<1$, and then $B>0$ large enough so that $\underline{U}_1 \leq 0$ and $\underline{V}_1 \leq 0$ for all $x + c t \geq 0$. If moreover $\eta>0$ is small enough such that $\alpha+\eta<\alpha(1+r)$ then for $x\leq -c t$ it is easy to check that
\begin{align*}
\max\{0, \gamma \underline{V}_1 \}^{1+r}&\leq \big( \gamma A e^{\alpha (x+c t)} V_{\alpha,R}\big)^{1+r}\\
&\leq  e^{\alpha (1+r) (x+c t)} ( \gamma A V_{\alpha,R})^{1+r} \leq e^{(\alpha+\eta) (x+c t)} ( \gamma A V_{\alpha,R})^{1+r} ,
\end{align*}
whence
\begin{align*}
&  \partial_t \underline{V}_1  - d \Delta \underline{V}_1 - f(\underline{V}_1)= \Big[f'(0)\underline{V}_1-f(\underline{V}_1)\Big] + \Big[ \partial_t \underline{V}_1  - d \Delta \underline{V}_1 - f'(0)\underline{V}_1\Big]\\
& \hspace{1cm}\leq
\max\{0, \gamma \underline{V}_1 \}^{1+r} + \Big[A\cdot 0 -B( \Lambda_R (\alpha + \eta)  + (\alpha +\eta)c)  e^{(\alpha+\eta) (x + ct)}V_{\alpha + \eta ,R}  \Big]\\
& \hspace{1cm}\leq e^{(\alpha+\eta) (x+c t)}\Big[ (\gamma A  V_{\alpha,R})^{1+r} -B( \Lambda_R (\alpha + \eta)  + (\alpha +\eta)c )  V_{\alpha + \eta ,R}\Big]<0
\end{align*}
if $B>0$ is large enough. On the other hand when $x \geq -c t$ the same inequality $\partial_t\underline{V}_1-d\Delta \underline{V}_1-f(\underline{V}_1)\leq 0$ easily follows from the fact that $\underline{V}_1 \leq 0$ and thus $f (\underline{V}_1) = f'(0) \underline{V}_1$.
Therefore the pair
$$
\underline{U}: = \max \{ 0, \underline{U}_1 \} ,\qquad \underline{V}:=\left\{
\begin{array}{ll}
\max \{ 0, \underline{V}_1 \} & \text{if } 0 \leq y \leq R,\\
0 & \text{elsewhere},
\end{array}
\right. 
$$
is a generalized subsolution of \eqref{eq:model} in the sense of Proposition~\ref{prop:comparison2}. By construction this subsolution moves with speed $c=c_R (\alpha)=-\frac{\Lambda_R(\alpha)}{\alpha}$, which can be made arbitrarily close to $c (\alpha)=-\frac{\Lambda(\alpha)}{\alpha}$ by choosing $R$ large enough. Moreover, with our assumption on the initial data and up to multiplication by a small constant, this subsolution can be assumed to lie below $(u_0,v_0)$ at time $t=0$ (observe that $\underline{U}=\underline{V}=0$ for $x\geq -c t$). As mentioned above, one may then proceed as in Section~\ref{sec:inner} to end the proof of Proposition~\ref{exp_decay_case}.
\end{proof}

%
%%%%%%%%%%%%%%%%%%%%%%%%%%%%%%%%%%%%%%%%%%%%%%%%%%%%%%%%%%%%%%%%%%%%%%%%%%%%%%%%%%%%%%%%%%%%%%%%%%%%%%%%%%%%%%
%%%%%%%%%%%%%%%%%%%%%%%%%%%%%%%%%%%%%%%%%%%%%%%%%%%%%%%%%%%%%%%%%%%%%%%%%%%%%%%%%%%%%%%%%%%%%%%%%%%%%%%%%%%%%%
%
\section{Some extensions}\label{sec:extension}

The crucial feature of the road-field system is that it is of a cooperative type, which in particular allowed us to construct and characterize the spreading speed through a family of principal eigenvalues and Krein-Rutman theory. We briefly discuss here some extensions to more general frameworks where a similar approach could be performed.

\paragraph{Truncated fields:} We first highlight here the fact that the generalized principal eigenvalues $\Lambda (\alpha)$ were obtained as the limits of the principal eigenvalues $\Lambda_R (\alpha)$ of some truncated field problems (see Section~\ref{sec:linear_pb} for the details). Therefore, it should be clear from our proofs that a similar result as Theorem~\ref{main:spread1} holds for the restriction of~\eqref{eq:model} to a strip domain $\{(x,y) \in \R^2 \, : \ 0<y <R\}$ with a Dirichlet boundary condition $v(y=R)=0$. 

More precisely, provided that $R$ is large enough, the solutions spread with the speed $c^*_R \in (0,\infty)$ along the road, where $c^*_R$ was defined in Section~\ref{section:spreading}. When $\mu$ and $\nu$ are positive constants, this was proved in~\cite{Tellini}, along with the fact that there exists a critical diffusion $D$ above which $c^*_R > c^*_{KPP}$ and below which $c^*_R < c^*_{KPP}$. This relied on the monotonicity of $c^*_R$ with respect to $D$, which remains an open problem in our periodic framework.

Noting that the propagation is slowed down by the Dirichlet condition on the upper boundary and in order to isolate the effect of the line of fast diffusion, one may also want to compare $c^*_R$ with
$$c^*_{KPP,R} := 2\sqrt{d f'(0) -  \frac{d^2 \pi^2}{4 R^2}} = \min_{\alpha >0} \frac{d\alpha^2 + f'(0) - d\frac{\pi^2}{4R^2}}{\alpha},$$
which one may check to be the critical spreading speed of solutions of
\begin{equation}\label{presque}
\left\{
\begin{array}{ll}
\partial_t v-d\Delta v=f(v) , & t>0, x\in \R , y \in (0,R),\\
\partial_y v (t,x,0) = 0 = v (t,x,R) , & t>0,x\in\R .
\end{array}
\right. 
\end{equation}
Because the main idea of the proof of Proposition~\ref{prop:eigen_enhance} (thus of Theorem~\ref{main:speedup} and Proposition~\ref{exp_decay_case}) roughly consists in determining the direction of the (average) flow between the road and the field, the above problem naturally arises. Note also that, in the limit case $R=+\infty$, it is clearly equivalent to \eqref{eq:KPP}.

Therefore, by a similar argument as in Section~\ref{sec:accelerate}, one may prove that 
$$-\Lambda_R (\alpha) < d\alpha^2 + f'(0) - d\frac{\pi ^2}{4R^2}$$ if 
$$\alpha < \alpha_R (D) := \sqrt{\frac{f'(0) - d \frac{\pi^2}{4R^2}}{D-d}},$$
while the opposite strict inequality holds if $\alpha > \alpha_R (D)$. 

Then, letting $\alpha^*_R$ be the unique (thanks to the analyticity of $-\Lambda_R$) positive solution of $c^*_R \alpha = -\Lambda_R (\alpha)$, it is straightforward to obtain the analogous of Proposition~\ref{exp_decay_case} in the truncated field case. Namely, solutions with exponential decay of order $e^{\alpha x}$, where $\alpha < \alpha_R^* $, are accelerated by the road in the sense that they spread with some speed 
$$c_R (\alpha) = \frac{-\Lambda_R (\alpha)}{\alpha} >  \frac{d\alpha^2 + f'(0) - d\frac{\pi^2}{4R^2}}{\alpha},$$
if and only if $\alpha > \alpha_R (D)$.  The right-hand term of the above inequality is, of course, the spreading speed of solutions of \eqref{presque} with decay of the same order $e^{\alpha x}$.

However, it is not clear in general whether the minimal speed $c^*_R$ is larger than $c^*_{KPP,R}$. This is related to the fact that we cannot always locate $\alpha^*_R$ where the minimum in the definition of $c^*_R$ is reached. In particular, the above discussion does not provide a precise diffusion threshold for the acceleration of solutions with compactly supported initial data (or more generally, initial data that decay faster than $e^{\alpha^*_R x}$). It is only straightforward to infer from the above that $c^*_R < c^*_{KPP,R}$ if $D < 2d$ as well as, using Proposition~\ref{prop:monotonicity_wrt_R} and more specifically the estimates on $-\Lambda_R (\alpha)$, that $c^*_R > c^*_{KPP,R}$ for large enough~$D$.

\paragraph{Including KPP reaction on the road:} The key assumption, which in particular we used extensively in the investigation of the effect of the road, is the fact that the heterogeneity is limited to the exchange terms. Therefore, one may easily check that our method still applies when replacing the road equation in
\eqref{eq:model} by
$$
\partial_t u-D\partial_x^2 u=\nu(x)v(t,x,0)-\mu(x) u + g(u),
$$
where $g(u)$ satisfies a KPP type assumption
$$g(0)=0, \; u \mapsto \frac{g(u)}{u} \mbox{ is non-increasing,} \; \mbox{ and } \frac{g(u)}{u} \leq 0 \mbox{ for large enough } u.$$
We leave the details of the construction of the spreading speed to the readers: the proof of Section~\ref{sec:linear_pb} applies to the letter up to minor changes (by simply adding $g'(0) U$ to the operator $L_1$, $g'(0)$ in the definition of $M_\alpha$, and denoting by $\lambda_\alpha$ the principal eigenvalue of $-D \frac{d^2}{dx^2} - 2 \alpha D \frac{d}{dx} + \mu - g'(0)$). For the sake of simplicity, let us just assume that we have constructed for this new problem a family of eigenvalues $-\Lambda_g (\alpha) \geq d\alpha^2 + f'(0)$ and a spreading speed
$$c^*_g (D) = \min_{\alpha >0} \frac{-\Lambda_g (\alpha)}{\alpha}.$$
We now focus on Proposition~\ref{prop:eigen_enhance}. The same straightforward computation leads to the critical equality $D\alpha^2 + g'(0) = d\alpha^2 + f'(0)$, or equivalently $\alpha = \sqrt{\frac{f'(0) - g'(0)}{D-d}}$, which separates whether $-\Lambda_g (\alpha)$ is larger or equal to $d\alpha^2 + f'(0)$. Using the fact that $c^*_{KPP} =  \min_{\alpha >0} \frac{d\alpha^2 + f'(0)}{\alpha}$ and that this latter minimum is reached for $\alpha = \sqrt{\frac{f'(0)}{d}}$, one then immediately gets that $c^*_g (D) > c^*_{KPP}$ if and only if 
$$D > 2d - d\frac{g'(0)}{f'(0)}.$$
This extends a result of~\cite{BRR_plus} in the periodic exchange framework. Proposition~\ref{exp_decay_case} also naturally extends and the solutions with exponential decay of order $e^{\alpha x}$ at time $t=0$ are accelerated by the road if and only if $D \alpha^2 + g'(0) > d\alpha^2 + f'(0)$.

\paragraph{The general periodic framework:} In the above extensions and as we already pointed out, we restricted ourselves to problems where only the exchange terms depend on $x$. The main reason is that we can only give an accurate diffusion threshold for the acceleration phenomena in such a framework (see the averaging argument in Section~\ref{sec:accelerate}). On the other hand, the construction of principal eigenvalues in the truncated field mostly relies on the classical Krein-Rutman theory, thanks to the monotone feature of our system. 

This means that this part of our argument, along with the existence of a spreading speed (at least in the truncated problem) could possibly be extended to a much more general framework as long as the $x$-periodicity is maintained. This would include heterogeneous reaction or diffusion (in both $x$ and $y$) and, perhaps more interestingly, curved roads.

Nonetheless, to extend Theorem~\ref{main:speedup} would then be a completely open but interesting problem. It of course seems unlikely that there always exists a simple formula for the diffusion threshold. To exhibit such a threshold, one may need to prove the monotonicity of the principal eigenvalue on some diffusion parameter, which we avoided here. More generally, we hope to be able in a future work to further characterize this eigenvalue and, in a similar fashion as for the single equation~\cite{Nadin10}, derive not only properties on the dependence of the spreading speed on diffusion, but also on various parameters and the exchange terms.

%
%\paragraph{Plan of the paper:} In Section~\ref{sec:cauchy_stationary}, we will first prove that the Cauchy problem is indeed well-posed. This will in particular rely on the fact that the system satisfies a comparison principle. Then, in Section~\ref{sec:cauchy_stationary}, we will show the existence of some positive stationary state to which solutions converge in large time. 
%
%The following sections will be concerned with spreading speed of solutions. Section~\ref{sec:linear_pb} will deal with wave-like solutions of the linear problem. It will in particular involve the construction of some generalized eigenvalues and eigenfunctions of a linearized road-field system in moving frames. It will then be shown that the critical speed of wave-like solutions of the linear problem is also the spreading speed of solutions of the original nonlinear problem \eqref{eq:model}. Lastly, we will exhibit some properties, among which the diffusivity ratio for the propagation to be enhanced by the road.
%
%{\color{red} To be rewritten in the end, some new parts are missing (e.g. the spreading for exponentially decaying initial data)} 
%
%This method is robust, in the sense that it may be extended to virtually any road-field system (including curved roads), provided that the $x$-periodicity is maintained. It also provides a characterization of the spreading speed, which we hope will allow us to further investigate, in some future work, how it depends on various parameters.\\

\bigskip

\noindent \textbf{Acknowledgement.} T. G. is supported by the
French {\it Agence Nationale de la Recherche} within the project
NONLOCAL (ANR-14-CE25-0013). L. M. was supported by the Portuguese Science Foundation through FCT fellowship SFRH/BPD/ 88207/2012.

The authors are grateful to Henri Berestycki and Jean-Michel Roquejoffre for suggesting this problem and valuable discussions.

\bibliographystyle{siam}
\bibliography{biblio}

\end{document}